\newcommand{\N}{\mathbb{N}}
\newcommand{\R}{\mathbb{R}}
\newcommand{\C}{\mathbb{C}}
\newcommand{\D}{\mathcal{D}}
\newcommand{\W}{\mathcal{W}}
\newcommand{\HS}{\mathcal{H}}
\newcommand{\Ptens}{\widehat{\otimes}}
 \newcommand{\BB}{\mathcal{B}}   \newcommand{\DD}{\mathcal{D}}
 \newcommand{\vertiii}[1]{{\left\vert\kern-0.25ex\left\vert\kern-0.25ex\left\vert #1 
    \right\vert\kern-0.25ex\right\vert\kern-0.25ex\right\vert}}
 \newtheorem{thm}{Theorem}[section]
 \newtheorem{lemma}[thm]{Lemma}
 \newtheorem{cor}[thm]{Corollary}
 \newtheorem{prop}[thm]{Proposition}
 \newtheorem{defin}[thm]{Definition}
 \numberwithin{equation}{section}
\newenvironment{theorem}[2][Theorem]{\begin{trivlist}
\item[\hskip \labelsep {\bfseries #1}\hskip \labelsep {\bfseries #2.}]}{\end{trivlist}}
\theoremstyle{definition}
\begin{document}

\title[On the compactness of bi-parameter singular integrals]{On the compactness of bi-parameter singular integrals}

\author{Cody B. Stockdale}
\address{Cody B. Stockdale, School of Mathematical Sciences and Statistics, Clemson University, Clemson, SC 29634, USA}
\email{cbstock@clemson.edu}

\author{Cody Waters}
\thanks{C. Waters is supported by the National Science Foundation, Grant No. DGE 2139839}
\address{Cody Waters, Department of Mathematics, Washington University in St. Louis, St. Louis, MO 63130, USA}
\email{codyw@wustl.edu}

\begin{abstract}
    We establish a new $T1$ theorem for the compactness of bi-parameter Calder\'on-Zygmund singular integral operators. Namely, we show that if a bi-parameter CZO $T$ satisfies the product weak compactness property, the mixed weak compactness/CMO property, and $T1, T^t1,$ $T_t1, T_t^t1 \in \text{CMO}(\mathbb{R}^{n_1}\times\mathbb{R}^{n_2})$, then $T$ is compact on $L^2(\mathbb{R}^{n_1}\times\mathbb{R}^{n_2})$. We also obtain endpoint compactness results for these operators and use them to deduce the necessity of most of our hypotheses. In particular, our conditions characterize the simultaneous $L^2(\mathbb{R}^{n_1}\times\mathbb{R}^{n_2})$-compactness of a bi-parameter CZO and its partial transpose. Our assumptions improve upon previously known sufficient conditions, and our proof, which is shorter and simpler than earlier arguments, utilizes a new abstract compactness criterion for partially localized operators on tensor products of Hilbert spaces. 
\end{abstract}

\keywords{Singular integral operator, bi-parameter operator, compact operator, $T1$ theorem}

\subjclass[2020]{42B20, 47B07}

\maketitle

%%%%%%%%%%%%%%%%%%%%%%%%%%%%%%%%%%%%%%%%%%%%%%%%%%%%%%%%%%%%%%%%%%

\section{Introduction}\label{Section:Introduction}

The Calder\'on-Zygmund theory of singular integral operators (SIOs) is a central topic in harmonic analysis. A main cornerstone of this field is David and Journ\'e's celebrated $T1$ theorem of \cite{DJ1984}, which characterizes the $L^2(\mathbb{R}^n)$-bounded SIOs $T$ in terms of the weak boundedness property and the membership $T1,\, T^{t}1 \in \text{BMO}(\mathbb{R}^n)$ -- such SIOs are called Calder\'on-Zygmund operators (CZOs). The classical theory further yields boundedness of CZOs on weighted spaces $L^p(w)$ for $p \in (1,\infty)$ and weights $w\in A_p$, as well as endpoint estimates from $H^1(\mathbb{R}^n)$ to $L^1(\mathbb{R}^n)$ and, by duality, from $L^\infty(\mathbb{R}^n)$ to $\text{BMO}(\mathbb{R}^n)$. In this work, we investigate bi-parameter, compact analogues of these results.

The study of bi-parameter singular integrals was initiated in the convolution case by Fefferman and Stein in \cite{FS1982} and extended to general non-convolution operators by Journ\'e with the following bi-parameter $T1$-type theorem in \cite{J1985}*{Theorem 4}.
\begin{theorem}{A}
\emph{If a bi-parameter SIO $T$ satisfies the weak boundedness property and $T1, T^t1,$ $T_t1, T_t^t1 \in \text{BMO}(\mathbb{R}^{n_1}\times\mathbb{R}^{n_2})$, then $T$ is bounded on $L^2(\mathbb{R}^{n_1} \times \R^{n_2})$.}
\end{theorem}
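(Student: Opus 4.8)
The plan is to control the bilinear form $\langle Tf,g\rangle$ for $f,g$ ranging over a convenient dense class of nice functions, using a bi-parameter Littlewood--Paley decomposition. First I would fix smooth Littlewood--Paley projections $\Delta_j^{(1)}$ on $\R^{n_1}$ and $\Delta_k^{(2)}$ on $\R^{n_2}$, together with companion averaging operators $S_j^{(1)}, S_k^{(2)}$, so that $f=\sum_{j,k}\Delta_j^{(1)}\Delta_k^{(2)}f$ in $L^2$ (and similarly for $g$), with the $\Delta$'s enjoying mean zero in the respective variable. Expanding both arguments produces the fourfold sum
\[
\langle Tf,g\rangle=\sum_{j,j',k,k'}\big\langle \Delta_{j'}^{(1)}\Delta_{k'}^{(2)}\,T\,\Delta_{j}^{(1)}\Delta_{k}^{(2)}f,\ g\big\rangle ,
\]
and, by two applications of the vector-valued square-function inequality (one per parameter), it suffices to prove for the operators $U_{j,j',k,k'}:=\Delta_{j'}^{(1)}\Delta_{k'}^{(2)}\,T\,\Delta_{j}^{(1)}\Delta_{k}^{(2)}$ an almost-orthogonality estimate $\|U_{j,j',k,k'}\|_{L^2\to L^2}\lesssim 2^{-\delta(|j-j'|+|k-k'|)}$ for some $\delta>0$ -- or the analogue valid after subtracting the paraproduct corrections described below.

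When the scale gap is large in \emph{at least one} parameter, say $|k-k'|$ is large, the regularity and decay of the bi-parameter Calder\'on--Zygmund kernel in that variable, combined with the mean-zero property of $\Delta_k^{(2)}$ (or $\Delta_{k'}^{(2)}$), yield a factor $2^{-\delta|k-k'|}$; together with the standard Littlewood--Paley decay in the other parameter and Schur's test, all such terms sum to a bounded operator without any use of the $\mathrm{BMO}$ hypotheses. The remaining terms have small gaps in both parameters. If moreover $j\approx j'$ and $k\approx k'$, then $\Delta_j^{(1)}\Delta_k^{(2)}$ is (an average of) an $L^2$-normalized bump adapted to a rectangle, and the weak boundedness property delivers the required bound directly.

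The genuinely bi-parameter difficulty is the \emph{partial paraproduct} configuration, say $j\approx j'$ but $k'\ll k$: cancellation in the second parameter has already been "spent" against the kernel, yet the scales there are far apart, so neither the kernel estimate nor the weak boundedness property applies. Here I would subtract a paraproduct: in the factor carrying the large scale in the second variable one replaces the relevant function by $1$, which is legitimate precisely because the other factor still carries cancellation, and the resulting error is governed by operators built from $T_t1$ and $T_t^t1$; symmetrically, the configuration $k\approx k'$, $j'\ll j$ produces operators built from $T1$ and $T^t1$, while the "doubly spent" corner is again handled by kernel decay. One then shows that these bi-parameter paraproducts $\Pi_b$, with $b\in\{T1,\,T^t1,\,T_t1,\,T_t^t1\}$, are bounded on $L^2(\R^{n_1}\times\R^{n_2})$ exactly because $b\in\mathrm{BMO}(\R^{n_1}\times\R^{n_2})$, via a bi-parameter Carleson embedding estimate (which in the product setting relies on Journ\'e's covering lemma for open subsets of $\R^{n_1}\times\R^{n_2}$).

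Assembling the pieces: the large-gap and near-diagonal terms combine, through Schur's test and a Cotlar--Stein type argument, into a bounded operator; the four families of paraproducts are bounded by the four $\mathrm{BMO}$ hypotheses; and once the paraproducts are removed, the surviving partial-paraproduct terms regain full cancellation in both parameters and become summable as in the near-diagonal case. I expect the main obstacle to be the bookkeeping of the partial-paraproduct decomposition: making rigorous the "insertion of $1$" in one variable while retaining cancellation in the other, arranging that each of the conditions $T1, T^t1, T_t1, T_t^t1\in\mathrm{BMO}$ is used to handle exactly one of the four corner configurations, and verifying the $L^2$-boundedness of the genuinely bi-parameter paraproducts that arise. (An alternative, and perhaps cleaner, organization is to first use the four $\mathrm{BMO}$ hypotheses to subtract paraproducts up front, reducing to the case $T1=T^t1=T_t1=T_t^t1=0$, and then run the Littlewood--Paley/almost-orthogonality argument on the reduced operator, where every corner now decays.)
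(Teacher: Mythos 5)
There is a genuine gap, and it sits exactly at the point you yourself flag as the ``genuinely bi-parameter difficulty.'' (Note also that the paper does not prove Theorem A; it cites Journ\'e, so the comparison below is with Journ\'e's argument, whose structure the paper mirrors in its compact setting.) Your assignment of the hypotheses to the scale configurations is backwards. The four conditions $T1, T^t1, T_t1, T_t^t1 \in \text{BMO}(\R^{n_1}\times\R^{n_2})$ control the four \emph{full} (top-level) paraproducts, i.e.\ the ``corner'' configurations in which the cancellation is spent in \emph{both} parameters (for instance $j'\ll j$ \emph{and} $k'\ll k$ gives $\Pi_{T1}$, the mixed corners give $\Pi_{T_t1,t}$ and $\Pi_{T^t_t1,t}^t$); these corners are certainly not ``handled by kernel decay'' --- already in one parameter the doubly-spent regime is precisely the paraproduct, and it is unbounded without $T1\in\text{BMO}$. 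Conversely, the ``edge'' configurations you single out ($j\approx j'$, $k'\ll k$, and its three relatives) produce \emph{partial} paraproducts whose symbols are objects like $\langle T(1\otimes\psi_z),\cdot\otimes\psi_w\rangle$, which are functions on $\R^{n_1}$ only; they are not built from $T1$, $T^t1$, $T_t1$, or $T_t^t1$, and none of the four product-BMO hypotheses says anything about them. So after your paraproduct subtraction the edge terms do not ``regain full cancellation'': they are exactly the terms left over, and your argument gives no bound for them.

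Controlling these partial paraproducts is the heart of Journ\'e's proof, and it uses the vector-valued structure of the hypothesis that your sketch never invokes: a bi-parameter SIO in Theorem A has $\delta\text{CZO}(\R^{n_2})$-valued kernels $K_1(x_1,y_1)$ (and symmetrically), so that for $a\in H^1(\R^{n_1})$ the form $\langle T(1\otimes\cdot),a\otimes\cdot\rangle$ is a genuine one-parameter CZO; via the one-parameter $T1$ theorem and $H^1$--BMO duality this yields $\|\langle T(1\otimes\psi_z),\cdot\otimes\psi_w\rangle\|_{\text{BMO}(\R^{n_1})}$ with decay in the relative position of $z,w$ (this is the mechanism behind Lemma \ref{VarpiCoefficientsEstimate} in the compact setting), and one then proves $L^2$-boundedness of paraproducts with BMO-valued symbols (Journ\'e's Section 7). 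If one instead works, as you implicitly do, with only scalar size/smoothness bounds on a bi-parameter kernel plus the WBP and the four BMO conditions, the theorem is not provable this way --- this is precisely why the scalar reformulations of Pott--Villarroya, Martikainen, and Ou must add partial kernel representation and mixed weak boundedness/BMO hypotheses, and why the present paper introduces the mixed weak compactness/CMO property in the compact analogue. Your large-gap-in-one-parameter step has the same flaw in miniature: smoothness of the kernel in the second variable plus mean zero of the fine-scale piece does not by itself give summable decay when the coarse-scale factor carries no cancellation; that regime is the (partial or full) paraproduct, not an error term.
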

\noindent Although the assumptions $T_t, T_t^t \in \text{BMO}(\mathbb{R}^{n_1}\times\mathbb{R}^{n_2})$ are not necessary for general $T$ to be bounded on $L^2(\mathbb{R}^{n_1}\times\mathbb{R}^{n_2})$, the conditions imposed in Theorem A characterize the simultaneous boundedness of $T$ and its partial transpose $T_t$; see \cite{J1985}*{Corollary on p. 76}. This point of view, which is further motivated in \cite{J1985}*{p. 75--76}, leads one to define a bi-parameter CZO to be an $L^2(\mathbb{R} ^{n_1}\times\mathbb {R}^{n_2})$-bounded bi-parameter SIO whose partial transpose is also bounded. 

While the $T1$ theorem of \cite{DJ1984} concerns the boundedness of Calder\'on-Zygmund singular integrals, Villarroya first addressed their $L^2(\mathbb{R}^n)$-compactness in \cite{V2015}. The following updated version of the compact $T1$ theorem was obtained by Mitkovski and the first author in \cite{MS2023}. \looseness=-1
\begin{theorem}{B}
\emph{Let $T$ be a CZO. Then $T$ is compact on $L^2(\mathbb{R}^n)$ if and only if $T$ satisfies the weak compactness property and $T1,T^t1\in\text{CMO}(\mathbb{R}^n)$.} 
\end{theorem}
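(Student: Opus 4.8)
The plan is to prove the two implications separately; the forward direction (compactness $\Rightarrow$ the two conditions) is soft, and the converse carries the real content. For \emph{necessity}, I would use that a compact operator maps weakly null sequences to norm null ones. The $L^2$-normalized bumps $\phi_k$ appearing in the weak compactness property -- adapted to cubes $Q_k$ that degenerate, i.e. whose side lengths tend to $0$ or to $\infty$, or whose centers escape to infinity -- converge weakly to $0$ in $L^2(\R^n)$; hence $\|T\phi_k\|_2 \to 0$, and pairing against the companion bumps $\psi_k$ (bounded in $L^2$) gives $|\langle T\phi_k,\psi_k\rangle| \to 0$, which is exactly the weak compactness property. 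To see $T1 \in \text{CMO}(\R^n)$ -- equivalently, that the mean oscillation of $T1$ over a cube $Q$ tends to $0$ as $Q$ degenerates -- I would test $T1$ against $L^1$-normalized mean-zero bumps $a_Q$ on $Q$ through $\langle T1,a_Q\rangle = \langle 1,T^ta_Q\rangle$, split $1 = \mathbf 1_{3Q} + \mathbf 1_{(3Q)^c}$, estimate the far piece using the Calder\'on--Zygmund kernel bounds and the cancellation of $a_Q$, and control the local piece via compactness of $T^t$ applied to $\mathbf 1_{3Q}$. By symmetry the same argument handles $T^t1$.

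For \emph{sufficiency}, assume $T$ is a CZO with the weak compactness property and $b_1 := T1,\ b_2 := T^t1 \in \text{CMO}(\R^n)$. The first step is to strip off paraproducts: with $\Pi_b$ the standard paraproduct with symbol $b$, set $S := T - \Pi_{b_1} - \Pi_{b_2}^t$, which is again a CZO and satisfies $S1 = S^t1 = 0$ (mod constants, which are harmless). Since $b \mapsto \Pi_b$ is bounded from $\text{BMO}(\R^n)$ to $\mathcal{B}(L^2(\R^n))$, since $\Pi_b$ is compact -- indeed approximable in operator norm by finite-rank operators, as only Haar frequencies and spatial locations in a bounded range contribute appreciably -- whenever $b \in C_c^\infty(\R^n)$, and since $\text{CMO}(\R^n)$ is the $\text{BMO}$-closure of $C_c^\infty(\R^n)$ while the compact operators form a norm-closed ideal, $\Pi_{b_1}$ and $\Pi_{b_2}^t$ are compact on $L^2(\R^n)$; in particular they have the weak compactness property, so $S$ does too. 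It remains to show that any CZO $S$ with $S1 = S^t1 = 0$ and the weak compactness property is compact on $L^2(\R^n)$.

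For this I would use the criterion that a bounded Hilbert space operator is compact if and only if there is an increasing sequence of finite-rank orthogonal projections $P_k \uparrow I$ (in the strong topology) with $\|P_k^\perp S P_k^\perp\| \to 0$, where $P_k^\perp := I - P_k$; indeed $S - P_k^\perp S P_k^\perp = P_k S + P_k^\perp S P_k$ has finite rank. Fixing a dyadic grid, I would take $P_k$ to be the orthogonal projection onto the span of the Haar functions $h_Q$ with $2^{-k} \le \ell(Q) \le 2^k$ and $Q \subseteq [-2^k,2^k]^n$, so that $P_k \uparrow I$. Then $\langle S P_k^\perp f, P_k^\perp g\rangle$ is a sum of matrix coefficients $\langle S h_Q, h_{Q'}\rangle$ in which at least one of $Q,Q'$ is ``extreme'': scale below $2^{-k}$ or above $2^k$, or far from the origin. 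For well-separated or strictly nested $Q,Q'$ one bounds $\langle Sh_Q,h_{Q'}\rangle$ using the Calder\'on--Zygmund kernel estimates and the cancellation $S1 = S^t1 = 0$; for comparable, nearby cubes $\langle Sh_Q,h_{Q'}\rangle$ is governed by the weak compactness property, which forces it to be small precisely when the common scale is extreme or the cubes are distant. A Schur test and almost-orthogonality summation over these interactions then yields $\|P_k^\perp S P_k^\perp\| \to 0$, giving compactness of $S$ and hence of $T$. (Alternatively, one could route the cancellative case through a dyadic representation of $S$ by shifts whose coefficients vanish on extreme cubes.)

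The main obstacle is exactly this last summation: organizing the small-scale, large-scale, and spatial-infinity regimes together with the separated / nested / near-diagonal cube interactions into a single Schur-type bound that tends to $0$. This forces one to record the weak compactness property quantitatively -- with a modulus vanishing as scales degenerate and as cubes recede to infinity -- and to control the mixed terms in which one cube is extreme but the other is not. The paraproduct reduction is clean but rests on the standard (yet not entirely trivial) compactness of paraproducts with $\text{CMO}$ symbols.
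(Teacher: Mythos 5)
You should first note that the paper does not prove Theorem B at all: it is quoted from \cite{MS2023}, and the present article only uses it (e.g.\ in Proposition \ref{WC/CMOnecessary}). So the comparison is with the approach of \cite{MS2023} and with the bi-parameter machinery built here. Your overall architecture is the right one and matches that framework: necessity is soft (weakly null bumps and normalized indicators plus compactness; your oscillation test for $T1\in\text{CMO}$ is fine, granting the known characterization of $\text{CMO}$ by vanishing oscillation over degenerating cubes), and sufficiency is reduced, by subtracting $\Pi_{T1}$ and $\Pi_{T^t1}^t$ whose compactness follows from density of $\DD(\R^n)$ in $\text{CMO}$ and $\text{BMO}$-boundedness of $b\mapsto\Pi_b$, to the cancellative case $S1=S^t1=0$ with the weak compactness property. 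Up to this point your proposal is essentially the standard proof.

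The genuine gap is in your treatment of the cancellative case via Haar projections and a Schur test. For a fixed dyadic grid, the kernel estimates together with $S1=S^t1=0$ do \emph{not} give summable off-diagonal decay for the Haar matrix: if $Q\subsetneq R$ and $Q$ lies within distance $\ell(Q)$ of the discontinuity set of $h_R$, the best available bound is $|\langle Sh_Q,h_R\rangle|\lesssim (|Q|/|R|)^{1/2}$ (split into $h_R1_{3Q}$, controlled only by $\|S\|_{L^2\to L^2}$, and the far part, where the smoothness gain $\ell(Q)^{\delta}$ is exactly cancelled by the tail integral), with no extra gain in $\ell(Q)/\ell(R)$. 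Since there are about $2^{k(n-1)}$ such boundary cubes at relative scale $2^{-k}$ inside $R$, the Schur sums behave like $\sum_k 2^{k(n-1)}2^{-kn/2}$ and diverge for $n\ge 2$; this is precisely the ``bad cube'' obstruction that forces either random dyadic grids/goodness or smooth wavelets. There is also a secondary mismatch: the weak compactness property is formulated for bounded families of smooth bumps (or for the frame $\{\psi_z\}$), so invoking it for Haar coefficients of comparable nearby cubes needs an additional approximation step. The way \cite{MS2023} (and this paper, in the bi-parameter setting) closes exactly the obstacle you flag is different technology: a continuous Parseval frame of smooth, compactly supported, cancellative wavelets $\psi_z$, for which one has genuine almost-diagonal decay as in Proposition \ref{OneParameterAlmostDiag}, combined with a Riesz--Kolmogorov-type compactness criterion in terms of the frame coefficients (cf.\ Proposition \ref{RieszKolmogorov} and Theorem \ref{Abstractbi-parameterCompactnessTheorem}) rather than a two-sided sandwich $P_k^{\perp}SP_k^{\perp}$ in a fixed Haar basis. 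As written, your final summation step would fail, so the proof is incomplete without replacing the Haar basis by smooth wavelets (or a randomized representation with compact coefficients).
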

\noindent A full theory of compact CZOs has by now been developed. In particular, the compactness on $L^2(\mathbb{R}^n)$ extends to $L^p(\mathbb{R}^n)$ for all $p \in (1,\infty)$ \cite{V2015}, to the corresponding endpoints \cites{OV2017, PPV2017}, and to weighted Lebesgue spaces with respect to $A_p$ weights \cites{SVW2022, HL2023, COY2022, FGW2023}. See also \cites{BOT2024, BLOT2025, CST2023} for applications and examples of compact CZOs in the context of pseudodifferential operators. 

These two lines of work were recently connected by Cao, Yabuta, and Yang in \cite{CYY2024} with the following $T1$-type theorem for compactness of bi-parameter CZOs.
\begin{theorem}{C}
\emph{If $T$ is a bi-parameter CZO such that
\begin{enumerate}\addtolength{\itemsep}{0.1cm}
    \item $T$ satisfies the product weak compactness property, 
    \item $T$ admits the compact full kernel representation, 
    \item $T$ admits the compact partial kernel representation, 
    \item $T$ satisfies the diagonal $\text{CMO}$ property, and 
    \item $T1, T^t1, T_t1, T_t^t1 \in \text{CMO}(\mathbb{R}^{n_1}\times \mathbb{R}^{n_2})$,
\end{enumerate}
then $T$ is compact on $L^2(\mathbb{R}^{n_1} \times \R^{n_2})$.}
\end{theorem}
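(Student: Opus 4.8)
The plan is to exhibit $T$ as a limit, in the $L^2\to L^2$ operator norm, of finite-rank operators, using that the compact operators form a closed ideal in $\mathcal{B}(L^2(\R^{n_1}\times\R^{n_2}))$. The first move is the standard reduction to the fully cancellative case. Bi-parameter paraproducts — the full paraproduct, the two partial paraproducts, and their various symmetries — with $\text{BMO}(\R^{n_1}\times\R^{n_2})$ symbols are themselves bi-parameter CZOs, and a paraproduct whose symbol has a \emph{finite} wavelet expansion is literally finite rank (a finite sum of rank-one operators); hence any paraproduct with a $\text{CMO}$ symbol is compact, since membership in $\text{CMO}$ is exactly what lets us approximate the symbol in $\text{BMO}$ by such finite wavelet truncations. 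Subtracting from $T$ the appropriate combination of paraproducts built from $T1,\,T^t1,\,T_t1,\,T_t^t1$ — each compact by hypothesis $(5)$ — we reduce to an operator $S$ with $S1=S^t1=S_t1=S_t^t1=0$ which still satisfies the structural conditions $(1)$--$(4)$.

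Next I expand $S$ in a bi-parameter wavelet basis $\{\psi_I\otimes\psi_J\}$ (tensor products of smooth one-parameter wavelets indexed by dyadic cubes $I\subset\R^{n_1}$, $J\subset\R^{n_2}$) and study the matrix $a_{IJI'J'}=\langle S(\psi_I\otimes\psi_J),\,\psi_{I'}\otimes\psi_{J'}\rangle$. I would establish four families of bounds. First, \emph{full standard decay}: when all four cubes are in general position, $|a_{IJI'J'}|$ decays geometrically in $|\log_2(\ell(I)/\ell(I'))|$ and in $1+\operatorname{dist}(I,I')/\max(\ell(I),\ell(I'))$, and likewise in the second parameter, from the compact full kernel representation together with the cancellation of $S$. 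Second, \emph{partial decay}: when $I,I'$ coincide in scale and are close but $J,J'$ are scale- or space-separated (and symmetrically), one gets decay in the ``good'' parameter from the compact partial kernel representation. Third, \emph{diagonal smallness}: when all four cubes are comparable in scale and location, $|a_{IJI'J'}|$ is small whenever the common scale is very large or very small or the cubes recede to infinity — this is where the product weak compactness property and the diagonal $\text{CMO}$ property enter. Fourth, \emph{mixed smallness}: the analogous vanishing when one parameter is near-diagonal and the other far, coming from the partial weak compactness and $\text{CMO}$ ingredients bundled with the partial kernel representation.

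Now fix a large $N$ and let $S_N$ be the partial sum of the wavelet expansion of $S$ restricted to quadruples with $2^{-N}\le\ell(I),\ell(J)\le 2^N$, with $I$ and $J$ meeting the box $[-2^N,2^N]^{n_1}\times[-2^N,2^N]^{n_2}$, and with $|\log_2(\ell(I)/\ell(I'))|\le N$ and $|\log_2(\ell(J)/\ell(J'))|\le N$. Only finitely many quadruples survive — the full standard decay confines $I'$ (resp. $J'$) to a bounded neighborhood of $I$ (resp. $J$) at one of finitely many scales — so $S_N$ is a finite sum of rank-one operators, hence compact. That $\|S-S_N\|_{L^2\to L^2}\to0$ as $N\to\infty$ follows by summing the four families of bounds: Schur's test and a Cotlar--Stein argument, carried out one parameter at a time, convert the geometric decay into summability, while the diagonal and mixed smallness force the contribution of the discarded quadruples (extreme scales, far out, or highly unbalanced scale ratio) to tend to $0$. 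Hence $S$, and therefore $T$, is a norm-limit of compact operators and is compact.

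The main obstacle is the bi-parameter bookkeeping in the last two steps, concentrated in the \emph{mixed} regime where one variable sits at a fine scale and the other at a coarse scale. Already in the boundedness theory these terms are controlled only through Journ\'e's covering lemma \cite{J1985}, and for compactness the quantitative smallness of the third and fourth families of bounds must survive that summation, so the compact partial kernel representation and the partial weak compactness hypotheses have to be used in tandem rather than separately; getting uniform enough control to extract the vanishing tail is the technical heart of the argument, everything else being a bi-parameter adaptation of the one-parameter compact $T1$ machinery of \cite{MS2023,V2015}. A cleaner and shorter route — the one taken in this paper — is to distill the ``localization plus smallness on the diagonal'' structure uncovered in the second step into an abstract compactness criterion for partially localized operators on a tensor product of Hilbert spaces, and then merely verify its hypotheses for $T$; this bypasses the explicit wavelet estimates and the Journ\'e-lemma summation entirely.
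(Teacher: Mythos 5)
You are proving Theorem C, which this paper does not prove at all: it is quoted from \cite{CYY2024}, and the paper instead establishes the stronger Theorem \ref{bi-parameterCZOCompactnessTheorem} by subtracting the four top-level paraproducts, then the four partial paraproducts, and treating the remaining fully cancellative part and the partial paraproducts with the abstract criterion for partially localized operators (Theorem \ref{Abstractbi-parameterCompactnessTheorem}); Theorem C is then recovered through the comparison in Section \ref{ConditionComparisonSection}, where the compact partial kernel representation and the diagonal CMO property are shown to imply the mixed weak compactness/CMO property (Proposition \ref{ComparisonProp}, Corollary \ref{ComparisonCorollary}). Your sketch instead follows the dyadic-rectangle route of \cite{CYY2024} itself, as you acknowledge at the end. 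That route is legitimate in principle, but as written it has a concrete gap at exactly the point you defer.

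Subtracting only the paraproducts with symbols $T1, T^t1, T_t1, T_t^t1$ yields a \emph{top-level} cancellative operator $S$, not a fully cancellative one: the pairings $\langle S(1\otimes\cdot),\cdot\otimes\cdot\rangle$, $\langle S(\cdot\otimes 1),\cdot\otimes\cdot\rangle$ and their transposes need not vanish, and your suggestion that the partial paraproducts are also ``built from $T1,\dots,T_t^t1$'' and ``compact by hypothesis (5)'' is not correct -- their symbols are the functions $\langle T(1\otimes\psi_z),\cdot\otimes\psi_w\rangle$, about which hypothesis (5) says nothing. Consequently your first family of bounds (geometric decay of $a_{IJI'J'}$ in $|\log_2(\ell(I)/\ell(I'))|$) fails precisely in the regime where $I'$ is deeply nested in $I$ while $J,J'$ are comparable: those entries are the partial-paraproduct entries, they carry no decay in the scale ratio, and the corresponding sum is controlled only by a Carleson-embedding/BMO argument. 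For a paraproduct, the tail of the expansion over scale ratios larger than $N$ does \emph{not} tend to zero in operator norm by entrywise summability; it does so only if the symbol has quantitatively vanishing mean oscillation. Hence the assertion that $\|S-S_N\|_{L^2\to L^2}\to 0$ by Schur/Cotlar--Stein applied to your four families cannot stand as stated: one must either subtract the partial paraproducts as well and prove their compactness from hypotheses (3) and (4) -- a genuinely separate argument -- or run a quantitative Carleson/Journ\'e-lemma estimate with vanishing constants. You flag this as ``the technical heart'' and leave it undone, but it is exactly the step where the one-parameter machinery of \cites{V2015,MS2023} does not transfer, so the proposal has not yet proved the theorem; the paper's resolution is the bound $\|\varpi_{(z,w)}\|_{L^2(\R^{n_1})\to L^2(\R^{n_1})}\lesssim\|\langle T(1\otimes\psi_z),\cdot\otimes\psi_w\rangle\|_{\text{BMO}(\R^{n_1})}$ of Lemma \ref{VarpiCoefficientsEstimate} combined with Theorems \ref{Abstractbi-parameterCompactnessTheorem} and \ref{partialparaproductcompactness}. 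A smaller gap of the same nature: your compactness of the top-level paraproducts rests on the unproved claim that finite wavelet truncations of a product-CMO symbol converge in product BMO, whereas the paper uses density of $\mathcal{D}(\R^{n_1})\otimes\mathcal{D}(\R^{n_2})$ and the factorization $\Pi_{f\otimes g}=\pi_f\otimes\pi_g$ (Theorem \ref{TopLevelParaproductCompactness}).
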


The purpose of this article is to further develop the theory of compact bi-parameter CZOs. Our first main result is the following new version of the compact bi-parameter $T1$ theorem. \looseness=-1
\begin{thm}\label{bi-parameterCZOCompactnessTheorem}
If $T$ is a bi-parameter CZO such that 
\begin{enumerate}\addtolength{\itemsep}{0.1cm}
    \item $T$ satisfies the product weak compactness property,
    \item $T$ satisfies the mixed weak compactness/CMO property, and 
    \item $T1, T^t1, T_t1, T_t^t1 \in \text{CMO}(\mathbb{R}^{n_1}\times \mathbb{R}^{n_2})$,
\end{enumerate}
then $T$ is compact on $L^2(\mathbb{R}^{n_1}\times \mathbb{R}^{n_2})$. Moreover, the product weak compactness property, the mixed weak compactness/CMO property, and the membership $T1, T^t1 \in \text{CMO}(\mathbb{R}^{n_1}\times\mathbb{R}^{n_2})$ are necessary for the $L^2(\mathbb{R}^{n_1}\times\mathbb{R}^{n_2})$-compactness of $T$. 
\end{thm}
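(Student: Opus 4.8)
The plan is to derive the sufficiency from the abstract compactness criterion for partially localized operators on the Hilbert tensor product $L^2(\mathbb{R}^{n_1})\otimes L^2(\mathbb{R}^{n_2})$, after a preliminary paraproduct reduction, feeding that criterion the one-parameter compact $T1$ theorem (Theorem~B) applied in each of the two variables; and to derive the necessity by testing $T$ and $T^t$ against $L^2$-normalized bumps that converge weakly to zero, together with the endpoint compactness estimates.

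\emph{Sufficiency.} First I would subtract the bi-parameter paraproducts built from the symbols $T1, T^t1, T_t1, T_t^t1$. Since $C_c^\infty(\mathbb{R}^{n_1}\times\mathbb{R}^{n_2})$ is dense in $\text{CMO}(\mathbb{R}^{n_1}\times\mathbb{R}^{n_2})$ and a bi-parameter paraproduct is bounded on $L^2$ by the $\text{BMO}$ norm of its symbol, each such paraproduct is an operator-norm limit of paraproducts with smooth, compactly supported symbols; the latter are compact---being bi-parameter CZOs with smooth data, for which a direct Littlewood--Paley/kernel-decay estimate exhibits them as norm limits of finite-rank operators---so the paraproducts in question are compact. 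Subtracting them, we may assume $T1=T^t1=T_t1=T_t^t1=0$, and properties (1) and (2) survive this subtraction since they are subadditive in $T$ and are possessed by the subtracted paraproducts. I then pass to $L^2(\mathbb{R}^{n_1}\times\mathbb{R}^{n_2})\cong L^2(\mathbb{R}^{n_1})\otimes L^2(\mathbb{R}^{n_2})$ and verify the hypotheses of the abstract criterion. The cancellation $T1=\cdots=0$ guarantees that, after a Littlewood--Paley decomposition in either variable, the off-diagonal blocks of $T$ in that variable's frequency indices decay, which is the partial-localization hypothesis of the criterion; its remaining inputs are (a) the one-parameter Calder\'on--Zygmund kernel decay of $T$ in each variable when the other is held in separated position, which a bi-parameter CZO automatically enjoys; (b) compactness of the frozen-variable operators---freezing the first variable at a localized scale produces a one-parameter CZO in the second variable whose weak compactness property and whose $\text{CMO}(\mathbb{R}^{n_2})$-symbols are furnished by the mixed weak compactness/CMO property, so Theorem~B makes it compact, and symmetrically in the first variable; and (c) the doubly-near-diagonal control, provided by the product weak compactness property. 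The criterion then certifies compactness of $T$ on $L^2(\mathbb{R}^{n_1}\times\mathbb{R}^{n_2})$.

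\emph{Necessity.} Suppose $T$ is compact on $L^2(\mathbb{R}^{n_1}\times\mathbb{R}^{n_2})$. For the product weak compactness property, test $\langle T(\varphi_{I_1}\otimes\varphi_{I_2}),\psi_{I_1}\otimes\psi_{I_2}\rangle$ against $L^2$-normalized bumps adapted to cubes $I_1\subseteq\mathbb{R}^{n_1}$ and $I_2\subseteq\mathbb{R}^{n_2}$: in every degenerating regime (a side length tending to $0$ or $\infty$, or a cube escaping to infinity) one has $\varphi_{I_1}\otimes\varphi_{I_2}\rightharpoonup 0$ in $L^2$, so compactness of $T$ forces $\|T(\varphi_{I_1}\otimes\varphi_{I_2})\|_{L^2}\to 0$, hence the pairing vanishes in the limit---which is the required decay. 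The mixed weak compactness/CMO property follows from the same mechanism carried out one frozen variable at a time: for fixed $I_1$, the induced operator $g\mapsto\langle T(\varphi_{I_1}\otimes g),\psi_{I_1}\otimes(\cdot)\rangle$ on $L^2(\mathbb{R}^{n_2})$ is bounded$\,\circ\,$compact$\,\circ\,$bounded, hence compact, so the ``only if'' half of Theorem~B---together with the one-variable endpoint estimates for its CMO component---endows it with the needed one-parameter structure, and the decay as $I_1$ degenerates comes again from $\varphi_{I_1}\rightharpoonup 0$. Finally, for $T1,T^t1\in\text{CMO}(\mathbb{R}^{n_1}\times\mathbb{R}^{n_2})$ I would invoke the endpoint compactness results of the paper---that $T$ and $T^t$ map $H^1$ to $L^1$ compactly---and dualize: combined with the description of $\text{CMO}$ as the $\text{BMO}$ functions whose rectangle oscillations vanish in the small-rectangle, large-rectangle, and escaping-rectangle regimes, this yields $T1,T^t1\in\text{CMO}$. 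I would take care to establish only these three conditions: $T_t1,T_t^t1\in\text{CMO}$ is strictly stronger---it already fails to be necessary for boundedness, cf.\ the discussion after Theorem~A---and corresponds instead to the simultaneous compactness of $T$ and its partial transpose $T_t$.

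\emph{Main obstacle.} The crux is the frozen-variable step in the sufficiency proof: extracting from the bi-parameter mixed weak compactness/CMO property and the paraproduct cancellation a bona fide one-parameter weak compactness property and the correct $\text{CMO}$-memberships, with enough uniformity in the frozen scale to meet the hypotheses of the abstract criterion---this is precisely where the two parameters interact and where the argument is delicate. A secondary technical point is verifying that subtracting the bi-parameter paraproducts preserves properties (1) and (2) and that the product kernel estimates satisfy the decay and summability requirements built into the abstract criterion.
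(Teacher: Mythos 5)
Your necessity sketch follows essentially the paper's route (weak null sequences plus the endpoint/$H^1$--$L^1$ compactness and duality), though the passage from decay of the frozen operators' $L^2(\mathbb{R}^{n_1})\to L^2(\mathbb{R}^{n_1})$ norms to decay of the $\text{BMO}(\mathbb{R}^{n_1})$ norms of their symbols is only gestured at; it requires the $H^p\to L^p$ bounds of Alvarez--Milman and a three-lines interpolation before dualizing, which is where the actual work lies.

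The sufficiency argument, however, has a genuine gap: you subtract only the four top-level paraproducts and then try to apply the abstract criterion once, with $H_1=L^2(\mathbb{R}^{n_1})$, $H_2=L^2(\mathbb{R}^{n_2})$, using Theorem B to make the frozen operators $T_{(z,w)}f=\langle T(f\otimes\psi_z),\cdot\otimes\psi_w\rangle$ compact. Two hypotheses of the criterion fail at that stage. First, partial localization demands summable off-diagonal decay of $\|T_{(z,w)}\|_{L^2(\mathbb{R}^{n_1})\to L^2(\mathbb{R}^{n_1})}$ in $d(z,w)$; this needs the cancellations $\langle T(\cdot\otimes 1),\cdot\otimes\cdot\rangle=\langle T(\cdot\otimes\cdot),\cdot\otimes 1\rangle=0$ in the frozen (second) variable, which top-level cancellation ($T1=T^t1=T_t1=T_t^t1=0$) does not give: the component of $T$ corresponding to the second-variable partial paraproduct $\varpi_{2,T}$ survives and, like a one-parameter paraproduct paired against wavelets in its noncancellative slot, does not have the required decay. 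Second, the criterion's weak compactness is not just compactness of each $T_{(z,w)}$ plus near-diagonal coefficient decay; it requires $\|T_{(z,w)}\|_{L^2(\mathbb{R}^{n_1})\to L^2(\mathbb{R}^{n_1})}\to 0$ as $z\to\infty$. The only quantitative route to this (the $T1$ theorem) involves the kernel constants of $\langle K_1(\cdot,\cdot)\psi_z,\psi_w\rangle$, which under the stated hypotheses are merely uniformly bounded -- demanding that they vanish would reinstate precisely the compact partial kernel assumptions the theorem is designed to remove. This is why the paper performs a second subtraction, of the partial paraproducts $\varpi_{1,T'},\varpi_{2,T'}$ and their transposes: the frozen operators of $\varpi_{1,T'}$ are genuine one-parameter paraproducts, so their operator norms are controlled by the $\text{BMO}(\mathbb{R}^{n_1})$ symbol norms, and the mixed weak compactness/CMO property supplies both the vanishing and the compactness; the fully cancellative remainder is then fed to the abstract criterion with $H_1=\mathbb{C}$ and the product frame, where the ``frozen operator norm'' is just $|\langle T''\psi_z,\psi_w\rangle|$, whose decay and vanishing come from the product-type almost-diagonal estimate and the product weak compactness property. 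Without this three-way decomposition (top-level paraproducts, partial paraproducts, fully cancellative part), the single application of the criterion you propose cannot be made to close.
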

\noindent We emphasize that Theorem \ref{bi-parameterCZOCompactnessTheorem} removes the compact full kernel representation assumption from Theorem C and replaces the compact partial kernel representation and the diagonal CMO condition with the mixed weak compactness/CMO property, which refines the former hypotheses. Moreover, we show the necessity of conditions in Theorem \ref{bi-parameterCZOCompactnessTheorem}, and our proof of their sufficiency is shorter and simpler than the argument for Theorem C from \cite{CYY2024}.

Similarly to the situation of Theorem A, the conditions of Theorem \ref{bi-parameterCZOCompactnessTheorem} characterize the simultaneous compactness of $T$ and its partial transpose. 
\begin{cor}\label{characterization}
    Let $T$ be a bi-parameter CZO. Then $T$ and $T_t$ are compact on $L^2(\R^{n_1} \times \R^{n_2})$ if and only if
    \begin{enumerate}\addtolength{\itemsep}{0.1cm}
    \item $T$ satisfies the product weak compactness property,
    \item $T$ satisfies the mixed weak compactness/CMO property, and 
    \item $T1, T^t1, T_t1, T_t^t1 \in \text{CMO}(\mathbb{R}^{n_1}\times \mathbb{R}^{n_2})$.
\end{enumerate}
\end{cor}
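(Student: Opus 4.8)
The plan is to read off both implications from Theorem \ref{bi-parameterCZOCompactnessTheorem} and its necessity clause, once one observes that the list of hypotheses is invariant under passing from $T$ to its partial transpose $T_t$. The structural point is that the four operators $T, T^t, T_t, T_t^t$ are the orbit of $T$ under the group of transpositions $\{\mathrm{Id},\,\cdot^t,\,\cdot_t,\,\cdot_t^t\}\cong\mathbb{Z}_2\times\mathbb{Z}_2$: writing $\sigma_i$ for the transpose in the $i$-th parameter, one has $T^t=\sigma_1\sigma_2 T$, $T_t=\sigma_2 T$, and $T_t^t=\sigma_1 T$, so that $\{T,T^t,T_t,T_t^t\}=\{(T_t),(T_t)^t,(T_t)_t,(T_t)_t^t\}$. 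In particular, hypothesis (3) is literally the same condition for $T$ and for $T_t$. One also needs that $T_t$ is itself a bi-parameter CZO: partial transposition preserves the full and partial standard kernel estimates, so $T_t$ is a bi-parameter SIO, it is $L^2$-bounded because $T$ is a bi-parameter CZO, and its partial transpose $(T_t)_t=T$ is bounded as well.

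For the ``if'' direction, assume (1)--(3). Theorem \ref{bi-parameterCZOCompactnessTheorem} immediately gives that $T$ is compact on $L^2(\R^{n_1}\times\R^{n_2})$. To get compactness of $T_t$ I would verify that $T_t$ satisfies hypotheses (1)--(3) and apply Theorem \ref{bi-parameterCZOCompactnessTheorem} again, this time to $T_t$. Hypothesis (3) for $T_t$ coincides with hypothesis (3) for $T$ by the orbit remark. Hypothesis (1), the product weak compactness property, is phrased through testing against pairs of bumps adapted to rectangles and is symmetric under interchanging the two test functions in either parameter, hence it transfers from $T$ to $T_t$. The remaining point is hypothesis (2): one checks from the definition that the mixed weak compactness/CMO property for $T_t$ follows from the same property for $T$ together with the CMO memberships in (3). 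This is the step I expect to require the most care, since the mixed property couples a weak-compactness-type estimate in one parameter with a CMO-type estimate in the other, and partial transposition interchanges the roles of the two parameters; the cancellation lost in that interchange is exactly what the additional hypotheses $T_t1, T_t^t1\in\mathrm{CMO}$ restore.

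For the ``only if'' direction, assume that $T$ and $T_t$ are both compact on $L^2(\R^{n_1}\times\R^{n_2})$. Applying the necessity clause of Theorem \ref{bi-parameterCZOCompactnessTheorem} to $T$ yields that $T$ satisfies (1) and (2) and that $T1, T^t1\in\mathrm{CMO}(\R^{n_1}\times\R^{n_2})$. Applying the same necessity clause to the bi-parameter CZO $T_t$ yields $(T_t)1=T_t1\in\mathrm{CMO}$ and $(T_t)^t1=T_t^t1\in\mathrm{CMO}$. Combining the two conclusions gives $T1, T^t1, T_t1, T_t^t1\in\mathrm{CMO}$, which is (3), while (1) and (2) were already obtained from the compactness of $T$. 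Hence (1)--(3) all hold, completing the proof. The only genuine obstacle in the whole argument is verifying the partial-transpose invariance of hypothesis (2) in the forward direction; everything else is a direct appeal to Theorem \ref{bi-parameterCZOCompactnessTheorem} together with the elementary observation that the hypothesis list is built from the orbit $\{T,T^t,T_t,T_t^t\}$.
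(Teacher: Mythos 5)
Your proposal is correct and takes essentially the same approach as the paper, which deduces the corollary by applying both the sufficiency and the necessity clauses of Theorem \ref{bi-parameterCZOCompactnessTheorem} to $T$ and to the bi-parameter CZO $T_t$, using that hypotheses (1)--(3) are invariant under partial transposition. The one step you flag as delicate is in fact immediate: as observed right after its definition in Section \ref{PreliminariesSection}, the mixed weak compactness/CMO property is invariant under full and partial transposes on its own (it already contains the conditions for both $T$ and $T^t$ in both parameters, and exchanging the roles of $z$ and $w$ inside the vanishing limits is harmless), so the CMO memberships in (3) are not needed to transfer it to $T_t$.
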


We also establish the following bi-parameter analogues of the endpoint properties of \cite{PPV2017}. 
\begin{thm}\label{EndpointCompactness}
    If $T$ is an $L^2(\mathbb{R}^{n_1}\times\mathbb{R}^{n_2})$-compact bi-parameter CZO, then $T$ is compact from $H^1(\R^{n_1} \times \R^{n_2})$ to $L^1(\R^{n_1} \times \R^{n_2})$ and from $L^\infty(\R^{n_1} \times \R^{n_2})$ to $\text{CMO}(\mathbb{R}^{n_1}\times \mathbb{R}^{n_2})$.
\end{thm}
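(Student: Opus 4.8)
The plan is to derive both endpoint statements from a single approximation scheme, realizing $T$ as an operator-norm limit of bi-parameter CZOs with smooth, compactly supported kernels. Concretely, I would construct operators $T_\delta$, $\delta>0$, each an integral operator with kernel in $C_c^\infty(\R^{n_1+n_2}\times\R^{n_1+n_2})$, such that $\|T-T_\delta\|_{H^1\to L^1}\to 0$ and $\|T-T_\delta\|_{L^\infty\to\text{BMO}}\to 0$ as $\delta\to 0$. Each such $T_\delta$ maps bounded subsets of $H^1(\R^{n_1}\times\R^{n_2})$ — indeed of $L^1(\R^{n_1}\times\R^{n_2})$ — to families that are uniformly supported in a fixed compact set and uniformly bounded in $C^1$, hence equicontinuous and uniformly integrable, so precompact in $L^1$ by the Kolmogorov–Riesz criterion; thus $T_\delta\colon H^1\to L^1$ is compact, and likewise $T_\delta\colon L^\infty\to\text{BMO}$ is compact with range contained in $\text{CMO}(\R^{n_1}\times\R^{n_2})$. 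Since the compact operators form a closed subspace of the bounded operators in operator norm, and $\text{CMO}$ is closed in $\text{BMO}$, it follows that $T$ is compact $H^1\to L^1$ and compact $L^\infty\to\text{BMO}$ with range in $\text{CMO}$, as claimed. The $L^\infty$ statement can alternatively be read off from the $H^1$ one by Schauder's theorem, since the full transpose $T^t$ is again an $L^2(\R^{n_1}\times\R^{n_2})$-compact bi-parameter CZO; the refinement to $\text{CMO}$ still uses the approximants above.

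Two ingredients drive the construction of the $T_\delta$. The first is the quantitative form of bi-parameter Calderón–Zygmund theory: for any bi-parameter CZO $S$ one has $\|S\|_{H^1\to L^1}+\|S\|_{L^\infty\to\text{BMO}}\lesssim\|S\|_{\mathrm{CZ}}$, where $\|S\|_{\mathrm{CZ}}$ aggregates the full- and partial-kernel Calderón–Zygmund constants, the product weak boundedness constant, and the $\text{BMO}$ norms of the symbols $S1,S^t1,S_t1,S_t^t1$ (the quantitative $L^2$-boundedness is Theorem A, and the passage to these endpoints is standard). The second is that the $L^2$-compactness of $T$ forces each of these quantities to \emph{degenerate} — the kernel bounds carry an extra factor tending to $0$ as $|x-y|\to 0$, as $|x-y|\to\infty$, and as $|x|\vee|y|\to\infty$, while the symbols lie in $\text{CMO}$ — which is precisely the ``only if'' content encoded in, and used to prove, Theorem \ref{bi-parameterCZOCompactnessTheorem}. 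Given this, I would, as in the one-parameter endpoint argument of \cite{PPV2017}, take $T_\delta$ to be obtained by multiplying the kernel of $T$ by a smooth cutoff to the region $\{\delta\le|x-y|\le\delta^{-1}\}\cap\{|x|,|y|\le\delta^{-1}\}$, mollifying, and correcting the (full and partial) paraproduct parts against smooth truncations of the $\text{CMO}$ symbols; the degeneracy statements then give $\|T-T_\delta\|_{\mathrm{CZ}}\to 0$, and the quantitative estimate converts this into the required operator-norm convergence.

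The main obstacle is carrying out this truncation correctly in the bi-parameter setting. A bi-parameter Calderón–Zygmund kernel is not controlled by a single size estimate: it must satisfy a full-kernel bound together with partial-kernel bounds on the diagonal of each factor, the latter valued in one-parameter Calderón–Zygmund kernel norms. The cutoff must therefore be performed compatibly in both groups of variables, and one must verify that $T_\delta$ remains a genuine bi-parameter CZO whose paraproduct data — where the $\text{CMO}$ hypotheses and the mixed weak compactness/CMO information are indispensable — stays small; this bookkeeping is the technical heart of the argument. A secondary difficulty is that $H^1(\R^{n_1}\times\R^{n_2})$ has no workable atomic decomposition (Chang–Fefferman atoms are supported on arbitrary open sets), so one cannot estimate $T-T_\delta$ by testing on atoms and must instead invoke the quantitative $H^1\to L^1$ bound as a black box, which in turn requires tracking carefully how Journé's argument depends on the Calderón–Zygmund constants.
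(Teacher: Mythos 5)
There is a genuine gap at the heart of your plan: the claim that $L^2(\R^{n_1}\times\R^{n_2})$-compactness of $T$ forces the kernel bounds to ``degenerate'' (extra factors tending to $0$ as $|x-y|\to 0$, $|x-y|\to\infty$, $|x|\vee|y|\to\infty$). This is not a consequence of compactness, and it is not the ``only if'' content of Theorem \ref{bi-parameterCZOCompactnessTheorem}: the necessity statements proved in the paper (and the one-parameter Theorem B) concern testing quantities only --- vanishing of the matrix coefficients $\langle T\psi_z,\psi_w\rangle$, the mixed weak compactness/CMO property, and $T1,T^t1\in\text{CMO}$ --- and give no pointwise decay of the Calder\'on--Zygmund kernels. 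Indeed, a main point of this paper is to \emph{remove} the compact full and partial kernel representations from Theorem C precisely because they are extra hypotheses, not consequences of compactness. In \cite{PPV2017} the truncation-and-mollification argument works because the compact (Villarroya-type) kernel estimates are part of the standing assumptions on the class of operators; here the hypothesis is only that $T$ is an $L^2$-compact bi-parameter CZO with standard kernels. Without vanishing kernel estimates, the error $T-T_\delta$ retains the near-diagonal and far-field pieces of the kernel, whose CZ constants do not become small, so the key step $\|T-T_\delta\|_{\mathrm{CZ}}\to 0$ cannot be justified; and since your quantitative endpoint bound is applied to $T-T_\delta$ as a bi-parameter CZO, the argument is circular at exactly this missing point. (Your first ingredient, $\|S\|_{H^1\to L^1}+\|S\|_{L^\infty\to\text{BMO}}\lesssim\|S\|_{\mathrm{CZ}}$, is fine as a black box; it is the second ingredient that fails.)

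For comparison, the paper's proof avoids kernels entirely: it takes finite-rank tensor wavelet projections $P_N$, which are uniformly bounded fully cancellative bi-parameter CZOs, so that $P_N^\perp$ is uniformly bounded on $H^p(\R^{n_1}\times\R^{n_2})$ for $p<1$ close to $1$; compactness on $L^2$ gives $\|TP_N^\perp\|_{L^2\to L^2}\to 0$, R.~Fefferman's theorem gives uniform $H^p\to L^p$ bounds, and a three-lines interpolation on Chang--Fefferman atoms (which, contrary to your concern, are perfectly workable via \cite{HAN20102834}) yields $\|TP_N^\perp\|_{H^1\to L^1}\to 0$; the $L^\infty\to\text{CMO}$ statement then follows by duality, using that each $P_N$ is finite rank with range in $C_0$. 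If you want to salvage your route, you would have to add compact full and partial kernel representation hypotheses, which is exactly what this theorem is designed to avoid.
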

\noindent 
The proof of Theorem \ref{EndpointCompactness} follows by interpolating compactness on $L^2(\mathbb{R}^{n_1}\times\mathbb{R}^{n_2})$ and boundedness from $H^p(\mathbb{R}^{n_1}\times\mathbb{R}^{n_2})$ to $L^p(\mathbb{R}^{n_1} \times \mathbb{R}^{n_2})$ for sufficiently large $p<1$, and duality.\looseness=-1

As observed in \cite{CYY2024}*{p. 6262}, the compactness of compact bi-parameter CZOs automatically extends to $L^p(w)$ for any $p \in (1,\infty)$ and any $w \in A_p(\mathbb{R}^{n_1}\times \mathbb{R}^{n_2})$ by the arguments of \cites{COY2022, HL2023}. We similarly conclude the following as a consequence of Theorem \ref{bi-parameterCZOCompactnessTheorem}. \looseness=-1
\begin{cor}\label{WeightedCompactnessCorollary}
If $p \in (1,\infty)$, $w \in A_p(\R^{n_1}\times\R^{n_2})$, and $T$ is a bi-parameter CZO such that 
\begin{enumerate}\addtolength{\itemsep}{0.1cm}
    \item $T$ satisfies the product weak compactness property,
    \item $T$ satisfies the mixed weak compactness/CMO property, and 
    \item $T1, T^t1, T_t1, T_t^t1 \in \text{CMO}(\mathbb{R}^{n_1}\times \mathbb{R}^{n_2})$,
\end{enumerate}
then $T$ is compact on $L^p(w)$.
\end{cor}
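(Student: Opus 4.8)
The plan is to deduce this from Theorem~\ref{bi-parameterCZOCompactnessTheorem} together with the weighted compact extrapolation machinery for bi-parameter Calder\'on--Zygmund operators developed in \cites{COY2022, HL2023}. Since hypotheses (1)--(3) here are precisely the hypotheses of Theorem~\ref{bi-parameterCZOCompactnessTheorem}, that result already yields that $T$ is compact on $L^2(\R^{n_1}\times\R^{n_2})$. It therefore suffices to upgrade this to the assertion that any bi-parameter CZO which is compact on $L^2(\R^{n_1}\times\R^{n_2})$ is automatically compact on $L^p(w)$ for every $p\in(1,\infty)$ and every $w\in A_p(\R^{n_1}\times\R^{n_2})$.

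To carry this out, I would first recall that every bi-parameter CZO is bounded on $L^q(v)$ for all $q\in(1,\infty)$ and all $v\in A_q(\R^{n_1}\times\R^{n_2})$, by the bi-parameter weighted boundedness theory for such operators. One then invokes the bi-parameter analogue of the compact Rubio de Francia extrapolation theorem: a linear operator that is bounded on the full $A_q$-scale and compact on a single space $L^{q_0}(v_0)$, with $q_0\in(1,\infty)$ and $v_0\in A_{q_0}$, is compact on every $L^q(v)$ with $q\in(1,\infty)$ and $v\in A_q$. Concretely, this is obtained by approximating $T$ in the $L^2(\R^{n_1}\times\R^{n_2})$-operator norm by bi-parameter CZOs $T_\varepsilon$ with smooth, compactly supported kernels -- which are compact on every $L^q(v)$ -- and then interpolating the operator-norm smallness of $T-T_\varepsilon$ on $L^2(\R^{n_1}\times\R^{n_2})$ against its uniform boundedness on a fixed $L^{q_0}(v_0)$, via Calder\'on's complex interpolation theorem for compact operators adapted to the weighted setting; since an operator-norm limit of $L^p(w)$-compact operators is $L^p(w)$-compact, the conclusion follows upon applying this with $T$ itself, which is $L^2$-compact by the previous paragraph.

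The main point requiring care is the bi-parameter compact extrapolation step: one must ensure that the approximating operators $T_\varepsilon$ retain the bi-parameter CZO structure after truncating and smoothing the full kernel, so that the uniform weighted bounds genuinely apply to the differences $T-T_\varepsilon$; this is where the product structure of the kernel estimates, rather than a single CZ estimate, is essential. Once such approximants are in hand, the interpolation of compactness is a soft consequence of the weighted interpolation theory, so no further obstacle arises, and the corollary follows.
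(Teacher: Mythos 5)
Your top-level reduction coincides with the paper's: hypotheses (1)--(3) are exactly those of Theorem \ref{bi-parameterCZOCompactnessTheorem}, so $T$ is compact on $L^2(\R^{n_1}\times\R^{n_2})$, and the paper then concludes by simply invoking the observation (recorded in \cite{CYY2024}) that an $L^2$-compact bi-parameter CZO is compact on $L^p(w)$ for all $p\in(1,\infty)$ and $w\in A_p(\R^{n_1}\times\R^{n_2})$ by the arguments of \cites{COY2022, HL2023}, together with the weighted boundedness of bi-parameter CZOs on the full product $A_p$ scale. If you treat that extrapolation-of-compactness statement as a citation, as the paper does, your argument is complete and structurally identical.

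The problem lies in your attempt to supply a ``concrete'' proof of the extrapolation step. You posit approximants $T_\varepsilon$ with smooth, compactly supported kernels that are bi-parameter CZOs, are compact on every $L^q(v)$, and converge to $T$ in the $L^2$ operator norm, with $T-T_\varepsilon$ uniformly bounded on a fixed $L^{q_0}(v_0)$. No such approximants are produced, and none are readily available: truncating and mollifying the kernel of $T$ controls errors pointwise, not in operator norm, while the operator-norm approximants furnished by $L^2$-compactness (finite-rank operators) carry no uniform bi-parameter kernel bounds, so the claimed uniform weighted boundedness of $T-T_\varepsilon$ is unsubstantiated. Moreover the auxiliary point $(q_0,v_0)$ cannot be fixed in advance: to make Stein--Weiss-type interpolation with change of weights land on a prescribed $L^p(w)$, one must choose $(q_0,v_0)$ depending on $(p,w)$ and verify $v_0\in A_{q_0}$ via the openness/reverse H\"older properties of product $A_p$ weights --- which is precisely the content of the theorems of \cites{COY2022, HL2023} you would be re-proving. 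Those works avoid kernel approximation altogether, combining Rubio de Francia-type weighted extrapolation with abstract interpolation of compactness on couples of weighted $L^p$ spaces. If you want a hands-on variant in the spirit of this paper, the projections of Section \ref{EndpointSection} are the natural substitute: $TP_N$ is finite rank (hence compact on every $L^p(w)$), $\|T-TP_N\|_{L^2\to L^2}\to 0$ by $L^2$-compactness, and $T$ and $P_N$ are uniformly bounded on the weighted scale; but even then the weight-change interpolation step just described must be carried out, so the cleanest correct proof remains the citation the paper uses.
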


The necessity direction of Theorem \ref{bi-parameterCZOCompactnessTheorem} follows from Cauchy-Schwarz for the product weak compactness property, Theorem \ref{EndpointCompactness} for the condition $T1, T^t1 \in \text{CMO}(\mathbb{R}^{n_1}\times \mathbb{R}^{n_2})$, and a similar argument to the proof of Theorem \ref{EndpointCompactness} for the mixed weak compactness/CMO property. To establish the sufficiency portion, we first subtract four top-level paraproducts from $T$ to produce a top-level cancellative bi-parameter CZO, i.e., a bi-parameter CZO $T'$ satisfying \looseness=-1
$$
    \langle T'(1 \otimes 1), \cdot \otimes \cdot\rangle \,  = \langle T'(1 \otimes \cdot), \cdot \otimes 1\rangle = \langle T'(\cdot \otimes 1), 1 \otimes \cdot\rangle
    = \langle T'(\cdot \otimes \cdot), 1 \otimes 1\rangle = 0.
$$
The compactness of the top-level paraproducts is proved by a reduction to compactness of one-parameter paraproducts, using the hypotheses $T1, T^t1,$ $T_t1, T_t^t1 \in \text{CMO}(\mathbb{R}^{n_1}\times\mathbb{R}^{n_2})$, and known estimates for boundedness of bi-parameter paraproducts. We next further subtract four partial paraproducts to produce a fully cancellative bi-parameter CZO, i.e., a top-level cancellative bi-parameter CZO $T''$ that satisfies 
$$
    \langle T''(1 \otimes \cdot) , \cdot \otimes \cdot\rangle = \langle T''(\cdot \otimes 1) , \cdot \otimes \cdot\rangle = \langle T''(\cdot \otimes \cdot) , 1 \otimes \cdot\rangle = \langle T''(\cdot \otimes \cdot) , \cdot \otimes 1\rangle = 0.
$$ 
We note how this decomposition of a bi-parameter CZO into a sum of a fully cancellative bi-parameter CZO, top-level paraproducts, and partial paraproducts naturally resembles the bi-parameter representation theorem of \cite{DWW2023}*{Theorem on p. 1833}. 

The compactness of the partial paraproducts and the remaining fully cancellative bi-parameter CZO is derived from the following abstract result for partially localized operators. 
\begin{thm}\label{Abstractbi-parameterCompactnessTheorem}
    Let $H_1$ and $H_2$ be Hilbert spaces and $T$ be a partially localized linear operator on $H_1\Ptens H_2$. If $T$ is weakly compact, then $T$ is compact.
\end{thm}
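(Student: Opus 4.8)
The plan is to prove this by mimicking, at the level of abstract Hilbert spaces, the standard mechanism that upgrades boundedness to compactness in the one-parameter $T1$ theory: decompose $T$ into an operator-norm convergent series of ``scale-separation bands'' in the parameter in which $T$ is localized, and then show that each band is compact by combining the vanishing-at-infinity furnished by weak compactness with the genuine compactness available in the other parameter. Fix orthonormal structures indexed by dyadic-type families in each of $H_1$ and $H_2$, and assume without loss of generality that $T$ is localized in the first parameter.

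First I would organize $T = \sum_{j \ge 0} T_j$, where $T_j$ collects the interactions between first-parameter blocks whose scales differ by $j$ (both signs of the difference absorbed into $j = 0, 1, 2, \dots$). The localization of $T$ in the first parameter provides summable off-diagonal decay of the relevant matrix coefficients \emph{uniformly in the second-parameter data}, so a Cotlar--Stein / Schur-test argument carried out in the first parameter, with the second-parameter coefficients treated as operator-valued entries, yields $\|T_j\|_{H_1 \Ptens H_2 \to H_1 \Ptens H_2} \lesssim 2^{-\delta j}$ for some $\delta > 0$. Hence $T$ is the operator-norm limit of the partial sums $\sum_{j \le N} T_j$, and it suffices to prove that each $T_j$ is compact.

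To that end, fix $j$ and write $T_j = \sum_{Q} R_Q$, the sum running over first-parameter blocks $Q$, where $R_Q$ is the piece of $T_j$ attached to $Q$; after a further almost-orthogonality bookkeeping in the first parameter, $R_Q$ acts, up to finite rank in the first parameter, as a single operator $S_Q$ on $H_2$. Partial localization forces each $S_Q$ to be compact on $H_2$ (or at least a norm limit of finite-rank operators on $H_2$), while weak compactness forces $\|S_Q\|_{H_2 \to H_2} \to 0$ as $Q$ escapes to infinity in any of the admissible senses---spatially, or in scale. Thus, given $\varepsilon > 0$, there is a finite set $F$ of blocks outside which $\|S_Q\| < \varepsilon$; the truncation $T_j^F := \sum_{Q \in F} R_Q$ is a finite sum of operators of the form (finite rank on $H_1$) $\otimes$ (compact on $H_2$), hence compact, and the almost-orthogonality estimate gives $\|T_j - T_j^F\| \lesssim \sup_{Q \notin F} \|S_Q\| \le C\varepsilon$. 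Therefore $T_j$ is a norm limit of compact operators, hence compact, and letting $N \to \infty$ finishes the proof.

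The main obstacle is the compactness of a single band $T_j$: this is exactly the point where the two halves of the hypothesis must be fused, since compactness is available only in the non-localized parameter and decay-to-zero-at-infinity only in the localized parameter, and the bridge between them is an almost-orthogonality estimate that must be run with operator-valued rather than scalar coefficients. The delicate part is keeping the uniformity precise, so that the truncation error is bounded by $\sup_{Q \notin F}\|S_Q\|$ independently of $j$; the remaining ingredients---that an elementary tensor of a finite-rank and a compact operator is compact, and that an operator-norm limit of compact operators is compact---are standard.
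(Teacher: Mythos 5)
Your high-level strategy (approximate $T$ in operator norm by pieces that are manifestly compact, using Schur-type almost-orthogonality in the localized parameter, compactness of the restricted operators in the other factor, and the vanishing at infinity to truncate) is the right fusion of the two hypotheses, but as written it has genuine gaps in the abstract setting of the theorem. First, there are no dyadic scales or orthonormal blocks to work with: partial localization and weak compactness are defined relative to the \emph{given} continuous Parseval frame $\{\psi_z\}_{z\in G}$ indexed by a locally compact group, and re-expanding in a basis of your own choosing destroys the only information the hypotheses provide. In particular, the hypotheses give no rate whatsoever --- only a weighted Schur condition $\sup_z\omega(z)^{-1}\int_G\|T_{(z,w)}\|\,\omega(w)\,dw<\infty$ together with vanishing of the off-diagonal tails and of the near-diagonal coefficients at infinity --- so your claim $\|T_j\|\lesssim 2^{-\delta j}$ is unjustified (it is also unnecessary: all you need is that the far-off-diagonal part has small Schur norm, which is exactly the tail condition in the definition of partial localization). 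Second, and more seriously, the compactness of a single ``block'' is asserted rather than proved, and is attributed to the wrong hypothesis: compactness of the restricted operators $T_{(z,w)}$ is part of \emph{weak compactness}, not of partial localization, and a block in the continuous setting is an \emph{integral} of these compact operators against frame elements over a region of $G$, not a finite sum of elementary tensors. Passing compactness through that integral requires a uniformity argument; this is precisely where the paper does real work, using finite-rank projections $P_N$ on $H_1$, the norm continuity of $z\mapsto\psi_z$, precompactness of the regions $E_N$, and dominated convergence. Likewise your truncation bound $\|T_j-T_j^F\|\lesssim\sup_{Q\notin F}\|S_Q\|$ needs a bounded-overlap/Schur estimate with the weight $\omega$, not just a supremum over blocks.

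For comparison, the paper avoids any band decomposition of $T$: it transports the problem by the isometric wavelet transform $\W$ into $L^2(G;H_1)$ and verifies a Riesz--Kolmogorov-type criterion for the image of the unit ball --- uniform vanishing of $\int_{G\setminus E_N}\|\W Tf(z)\|_{H_1}^2\,dz$ (a weighted Schur estimate that combines the localization tails with the near-diagonal vanishing from weak compactness), plus precompactness of $\{\W Tf(z)\}_{\|f\|\le 1}$ in $H_1$ for each fixed $z$ (via the finite-rank projections just mentioned). If you rewrite your blocks and bands as integrals over precompact subsets of $G\times G$ and supply the Schur and uniformity arguments above, your approximation-by-compacts scheme can be made to work, but those are exactly the missing steps.
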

\noindent Above, $H_1\Ptens H_2$ denotes the completion of the algebraic tensor product of $H_1$ and $H_2$, the notion of partial localization quantifies the off-diagonal decay of the matrix coefficients of the operator with respect to a suitable frame for $H_2$ and treating $H_1$ like scalars, and the weak compactness condition asserts that these matrix coefficients, which are operators on $H_1$, are compact and vanish in the direction of the diagonal.  We complete the proof of Theorem \ref{bi-parameterCZOCompactnessTheorem} by showing that the remaining fully cancellative bi-parameter CZO and the partial paraproducts are partially localized with respect to appropriate wavelet frames, and deducing their compactness from Theorem \ref{Abstractbi-parameterCompactnessTheorem} using the product weak compactness property and the mixed weak compactness/CMO property, respectively. 

The paper is organized as follows. In Section \ref{PreliminariesSection}, we define bi-parameter CZOs, the product weak compactness property, product CMO, and the mixed weak compactness/CMO property, and we compare Theorem \ref{bi-parameterCZOCompactnessTheorem} to Theorem C. In Section \ref{EndpointSection}, we prove the endpoint result, Theorem \ref{EndpointCompactness}. In Section \ref{AbstractSection}, we define partially localized operators and prove Theorem \ref{Abstractbi-parameterCompactnessTheorem}. In Section \ref{ParaproductSection}, we discuss the compactness of top-level and partial paraproducts. In Section \ref{FullyCancellativeSection}, we characterize the compactness of fully cancellative bi-parameter CZOs. In Section \ref{MainSection}, we prove our main results, Theorem \ref{bi-parameterCZOCompactnessTheorem} and Corollary \ref{characterization}.

%%%%%%%%%%%%%%%%%%%%%%%%%%%%%%%%%%%%%%%%%%%%%%%%%%%%%%%%%%%%%%%%%%%%%%%%%%%

\section{Preliminaries}\label{PreliminariesSection}

We use the notation $A\lesssim B$ if $A\leq CB$ for some constant $C>0$, and write $A\approx B$ if $A\lesssim B$ and $B \lesssim A$. For $x \in \mathbb{R}^n$ and $r>0$, we set $B(x,r)$ to be the Euclidean ball centered at $x$ of radius $r$. We use $|A|$ to denote the Lebesgue measure of a set $A\subseteq \mathbb{R}^n$. We write $f_1 \otimes f_2(x) = f_1(x_1)f_2(x_2)$ for functions $f_i$ on $\mathbb{R}^{n_i}$ and $x=(x_1,x_2) \in \mathbb{R}^{n_1}\times\mathbb{R}^{n_2}$. 

\subsection{Bi-parameter singular integral operators}
In \cite{J1985}, Journ\'e first defined bi-parameter SIOs using vector-valued one-parameter kernels. Later on, the setup of bi-parameter singular integrals was reformulated in terms of mixed-type scalar conditions on bi-parameter kernels by Pott and Villarroya, and by Martikainen, in \cites{PV2011, M2012} -- this viewpoint has also been utilized in \cites{O2015, O2017, CYY2024}; in particular, in Theorem C. 
However, it was shown by Grau de la Herr\'an in \cite{G2016} that these perspectives yield the same class of operators; see also \cite{O2017}*{p. 329}. For convenience in our approach, we adopt Journé's original vector-valued perspective. 

For a Banach space $B$ and $\delta \in (0,1]$, we say that $K\colon \mathbb{R}^{2n}\setminus \{(x,x)\colon x \in \mathbb{R}^n\}\rightarrow B$ 
is a $B$-valued $\delta$-CZ kernel if there exists $C>0$ such that
\begin{align}\label{kernelsize}
\|K(x,y)\|_B\leq \frac{C}{|x-y|^n}
\end{align}
whenever $x \neq y$, and 
\begin{align}\label{kernelsmoothness}
    \|K(x,y)-K(x,y')\|_B, \,\|K(y,x)-K(y',x)\|_B\leq C\frac{|y-y'|^{\delta}}{|x-y|^{n+\delta}}
\end{align}
whenever $|y-y'| \leq \frac{1}{2}|x-y|$. We denote the smallest $C>0$ such that both \eqref{kernelsize} and \eqref{kernelsmoothness} hold by $\|K\|_{B,\delta}$. If $B=\mathbb{C}$, we simply call $K$ a $\delta$-CZ kernel and write $\|K\|_{\delta}$ for $\|K\|_{\mathbb{C},\delta}$. 

Let $\DD(\R^n)$ denote the space of smooth functions with compact support, and let $\DD(\R^n)'$ be the space of distributions. A $\delta$-SIO is a continuous operator $T\colon \DD(\R^n)\to \DD(\R^n)'$ such that 
$$
    \langle Tf,g\rangle =\iint_{\mathbb{R}^{2n}}K(x,y)f(y)g(x)\,dxdy
$$
for some $\delta$-CZ kernel $K$ and all $f, g \in \DD(\mathbb{R}^n)$ with disjoint supports. We say that a $\delta$-SIO is a $\delta$-CZO if it has a bounded extension on $L^2(\mathbb{R}^n)$. Recall that the collection of $\delta$-CZOs, $\delta\text{CZO}(\mathbb{R}^n)$, is a Banach space with norm given for $T \in \delta\text{CZO}(\R^n)$ by 
$$
    \|T\|_{\delta\text{CZO}(\R^n)} := \|T\|_{L^2(\mathbb{R}^n)\rightarrow L^2(\mathbb{R}^n)} + \|K\|_{\delta}.
$$

A continuous operator $T\colon \mathcal{D}(\R^{n_1}) \otimes \mathcal{D}(\R^{n_2}) \rightarrow (\mathcal{D}(\R^{n_1}) \otimes \mathcal{D}(\R^{n_2}))'$ is a bi-parameter $\delta$-SIO if there are $\delta\text{CZO}(\mathbb{R}^{n_j})$-valued $\delta$-CZ kernels $K_i \colon \R^{2n_i} \setminus \{(x,x)\colon x \in \mathbb{R}^{n_i}\} \rightarrow \delta\text{CZO}(\mathbb{R}^{n_j})$ with \looseness=-1
$$
    \langle T(f_1 \otimes f_2), g_1 \otimes g_2\rangle = \iint_{\mathbb{R}^{2n_i}}f_i(y)g_i(x)\langle K_i(x,y)f_j,g_j\rangle\,dxdy
$$
whenever $f_i, g_i \in \mathcal{D}(\mathbb{R}^{n_i})$ have disjoint supports for each $i, j \in\{1,2\}$ with $i \neq j$. The transposes of $T$ are defined through 
\begin{align*}
    \langle T(f_1 \otimes f_2), g_1 \otimes g_2 \rangle &= \langle T^t(g_1 \otimes g_2), f_1 \otimes f_2 \rangle\\
    &=\langle T_t(g_1 \otimes f_2), f_1 \otimes g_2 \rangle\\
    &=\langle T_t^t(f_1 \otimes g_2), g_1 \otimes f_2 \rangle.
\end{align*}
We say that a bi-parameter $\delta$-SIO $T$ is a bi-parameter $\delta$-CZO if 
\begin{align*}
    \|T\|_{\delta \text{CZO}(\R^{n_1} \times \R^{n_2})} := \|T&\|_{L^2(\R^{n_1} \times \R^{n_2}) \rightarrow L^2(\R^{n_1} \times \R^{n_2})} + \|T_t\|_{L^2(\R^{n_1} \times \R^{n_2}) \rightarrow L^2(\R^{n_1} \times \R^{n_2})} \\
    &\quad\quad+ \sum_{\substack{i,j \in \{1,2\}\\ i\neq j}}\|K_i\|_{\delta\text{CZO}(\mathbb{R}^{n_j}), \delta}<\infty. 
\end{align*}

\subsection{Product weak compactness, $\text{CMO}$, and mixed weak compactness/CMO}
To define the product weak compactness property, we follow the approach of \cite{MS2023} in the one-parameter setting and first introduce a continuous wavelet frame indexed by the $ax+b$ group. Recall that the $ax+b$ group is the group $\mathbb{R}^{n+1}_+:=\mathbb{R}^n\times\mathbb{R}_+$ with operation given by \looseness=-1
$$
    (x,t)*(y,s) := (ty+x,ts)
$$
for $x,y \in \mathbb{R}^{n}$ and $t,s >0$. 
The identity element of $\mathbb{R}^{n+1}_+$ is $(0,1)$, and inverses are given by $(x,t)^{-1} = \left(-\frac{x}{t},\frac{1}{t}\right)$.
The left Haar measure, $\lambda$, is given by
$$
    d\lambda(z) = \frac{dxdt}{t^{n+1}}
$$ 
for $z=(x,t) \in \mathbb{R}^{n+1}_+$, and the left-invariant metric, d, is the Riemannian metric given by 
$$
    ds^2=\frac{dx^2+dt^2}{t^2}.
$$
Let $D(z,r) := \{w \in \mathbb{R}^{n+1}_+\colon d(z,w)<r\}$ denote the hyperbolic ball in $\mathbb{R}^{n+1}_+$. We denote the $L^2(\mathbb{R}^n)$-norm preserving translation and dilation of $f$ by 
$$
    f_z:= t^{-n/2}f\bigg( \frac{\cdot - x}{t}\bigg)
$$
for $z=(x,t) \in \mathbb{R}^{n+1}_+$. 

Fix a real-valued mother wavelet $\psi \in \mathcal{D}(\mathbb{R}^n)$ satisfying  
\[
    \text{supp}\,\psi \subseteq B(0,1), \quad  \int_{\mathbb{R}^n} \psi(x)\,dx =0, \quad  \text{and}\quad
\int_0^{\infty}\frac{|\widehat{\psi}(\xi t)|^2}{t}\,dt=1
\]
for $\xi \neq 0$. Under these conditions, $\{\psi_z\}_{z\in\mathbb{R}^{n+1}_+}$ is a continuous Parseval frame for $L^2(\mathbb{R}^n)$, i.e., \looseness=-1
$$
    \|f\|_{L^2(\mathbb{R}^n)}^2 = \int_{\mathbb{R}^{n+1}_+} |\langle f,\psi_z\rangle|^2 \,d\lambda(z)
$$
for all $f \in L^2(\mathbb{R}^n)$. 
The same idea leads to the following result in the product setting. Below, we write $\psi_{z} := \psi_{z_1}\otimes \psi_{z_2}$ for $z = (z_1,z_2) \in \mathbb{R}^{n_1+1}_+ \times \mathbb{R}^{n_2+1}_+$.
\begin{lemma}
    The set $\{\psi_{z}\}_{z \in \mathbb{R}^{n_1+1}_+ \times \mathbb{R}^{n_2+1}_+}$ is a continuous Parseval frame for $L^2(\mathbb{R}^{n_1}\times\mathbb{R}^{n_2})$.
\end{lemma}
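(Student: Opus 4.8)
The plan is to reduce the product statement to the one-parameter Parseval frame property, which the excerpt has already established, by applying the one-variable identity in each variable successively via Fubini. Concretely, for $f \in L^2(\R^{n_1} \times \R^{n_2})$ and fixed $z_2 \in \R^{n_2+1}_+$, I would consider the $\R^{n_1}$-slice behavior of $f$ against $\psi_{z_2}$ in the second variable; more precisely, I would apply the one-parameter Parseval identity for $\{\psi_{z_1}\}_{z_1 \in \R^{n_1+1}_+}$ on $L^2(\R^{n_1})$ with the (almost everywhere defined) $L^2(\R^{n_1})$-valued function $x_1 \mapsto \langle f(x_1, \cdot), \psi_{z_2} \rangle$, and then integrate in $z_2$ using the one-parameter Parseval identity for $\{\psi_{z_2}\}_{z_2 \in \R^{n_2+1}_+}$ on $L^2(\R^{n_2})$.

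The key steps, in order, are as follows. First, I would record that $\langle f, \psi_{z_1} \otimes \psi_{z_2} \rangle = \langle \langle f, \psi_{z_2}\rangle_{2}, \psi_{z_1}\rangle_{1}$, where $\langle \cdot, \cdot\rangle_i$ denotes the pairing in the $i$-th variable; this is just the definition of the tensor product wavelet together with Fubini, valid since $f \in L^2$ and the $\psi_{z_i}$ are bounded with compact support. Second, for each fixed $z_2$, set $g_{z_2}(x_1) := \int_{\R^{n_2}} f(x_1,x_2)\psi_{z_2}(x_2)\, dx_2$, which lies in $L^2(\R^{n_1})$ for a.e.\ $z_2$ and whose $L^2(\R^{n_1})$-norm is measurable in $z_2$; applying the one-parameter Parseval frame property in the first variable gives
\[
    \int_{\R^{n_1+1}_+} |\langle f, \psi_{z_1}\otimes\psi_{z_2}\rangle|^2 \, d\lambda(z_1) = \|g_{z_2}\|_{L^2(\R^{n_1})}^2 = \int_{\R^{n_1}} |g_{z_2}(x_1)|^2 \, dx_1.
\]
Third, integrate both sides against $d\lambda(z_2)$ and swap the order of integration (Tonelli, since the integrand is nonnegative) to obtain
\[
    \int_{\R^{n_1+1}_+ \times \R^{n_2+1}_+} |\langle f, \psi_z\rangle|^2 \, d\lambda(z) = \int_{\R^{n_1}} \int_{\R^{n_2+1}_+} |\langle f(x_1,\cdot), \psi_{z_2}\rangle|^2 \, d\lambda(z_2) \, dx_1,
\]
and apply the one-parameter Parseval frame property in the second variable to the inner integral, which equals $\|f(x_1,\cdot)\|_{L^2(\R^{n_2})}^2$ for a.e.\ $x_1$; a final application of Tonelli/Fubini identifies $\int_{\R^{n_1}}\|f(x_1,\cdot)\|_{L^2(\R^{n_2})}^2\, dx_1$ with $\|f\|_{L^2(\R^{n_1}\times\R^{n_2})}^2$.

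The only genuine obstacle is bookkeeping the measurability and integrability needed to justify Fubini/Tonelli at each step --- in particular, that $z_2 \mapsto g_{z_2}$ is a well-defined $L^2(\R^{n_1})$-valued map for a.e.\ $z_2$ and that $z_2 \mapsto \|g_{z_2}\|_{L^2(\R^{n_1})}^2$ is measurable --- but since every integrand in sight is nonnegative, Tonelli applies freely and no delicate absolute-convergence estimate is required. It is worth remarking that this argument shows more generally that the (completed) tensor product of two continuous Parseval frames is a continuous Parseval frame for the Hilbert-space tensor product, specialized here to $L^2(\R^{n_1}) \widehat{\otimes} L^2(\R^{n_2}) = L^2(\R^{n_1}\times\R^{n_2})$; I would phrase the proof to make this abstract content transparent, since it foreshadows the role of tensor products of Hilbert spaces in Theorem \ref{Abstractbi-parameterCompactnessTheorem}.
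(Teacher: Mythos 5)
Your argument is correct, but it takes a different route from the paper's. The paper proves the product identity by a single direct computation on the Fourier side: it writes $\langle f,\psi_z\rangle$ as a convolution, applies Plancherel in both variables at once, and invokes the admissibility condition $\int_0^\infty |\widehat{\psi}(\xi t)|^2\,\frac{dt}{t}=1$ separately in $\xi_1$ and $\xi_2$ — essentially re-running the one-parameter Calder\'on reproducing computation in the product setting. You instead treat the one-parameter Parseval frame identity (already recorded in the paper) as a black box and iterate it variable by variable, with Tonelli justifying the interchange since all integrands are nonnegative; the slice functions $g_{z_2}$ and $f(x_1,\cdot)$ lie in the appropriate $L^2$ spaces by Cauchy--Schwarz and Fubini applied to $|f|^2$, so each application of the one-parameter identity is legitimate. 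What your approach buys is exactly what you note: it isolates the abstract fact that the tensor product of two continuous Parseval frames is a continuous Parseval frame for the Hilbert-space tensor product, which resonates with the framework of Theorem \ref{Abstractbi-parameterCompactnessTheorem}; what the paper's computation buys is self-containment at the level of the Fourier transform, making visible precisely where the admissibility normalization of $\psi$ enters. Either proof is acceptable.
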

\begin{proof}
    Let $f \in L^2(\R^{n_1} \times \R^{n_2})$. Denoting $\widetilde{\psi}_t(x) = t_1^{-n_1/2}\psi\big(-\frac{x_1}{t_1}\big)t_2^{-n_2/2}\psi\big(-\frac{x_2}{t_2}\big)$, applying Fubini's theorem, Plancherel's identity, the fact $\widehat{\widetilde{\psi}_t}(\xi) = (\widehat{\widetilde{\psi}})_{t^{-1}}(\xi)$, and the assumption on $\psi$, we have 
    \begin{align*}
        \int_{\R_+^{n_1 + 1}} \int_{\R_+^{n_2 + 1}} | &\langle f, \psi_{z} \rangle |^2 \, d\lambda(z) = \int_0^\infty \int_{\R^{n_1}} \int_0^\infty \int_{\R^{n_2}} |f * \tilde{\psi}_{t}(x)|^2 \, dx_2 \frac{dt_2}{t_2^{n_2+1}} dx_1 \frac{dt_1}{t_1^{n_1+1}}\\
        &= \int_0^\infty  \int_0^\infty \int_{\R^{n_1}} \int_{\R^{n_2}} |\widehat{f}(\xi)|^2 |\widehat{\widetilde{\psi}_{t}}(\xi)|^2  \, d\xi_2  d\xi_1 \frac{dt_2}{t_2^{n_2+1}} \frac{dt_1}{t_1^{n_1+1}}\\
        &= \int_{\R^{n_1}} \int_{\R^{n_2}} |\widehat{f}(\xi)|^2 \int_0^\infty\frac{|\widehat{\psi}(-\xi_1t_1)|^2}{t_1}\,dt_1\int_0^\infty\frac{|\widehat{\psi}(-\xi_2t_2)|^2}{t_2}\,dt_2\,d\xi_2d\xi_1\\
        &= \|\widehat{f}\|_{L^2(\R^{n_1} \times \R^{n_2})}^2\\ 
        &= \|f\|_{L^2(\R^{n_1} \times \R^{n_2})}^2,
    \end{align*}
    as desired.
\end{proof}

Recall that an operator $T\colon \DD(\mathbb{R}^n)\rightarrow\DD(\mathbb{R}^n)'$ satisfies the weak boundedness property if 
$$
    \sup_{z \in \mathbb{R}^{n+1}_+}\sup_{f,g \in \BB} |\langle Tf_{z}, g_{z}\rangle| < \infty
$$
for every bounded $\BB \subseteq \mathcal{D}(\mathbb{R}^n)$, and $T$ satisfies the weak compactness property if 
$$
    \lim_{z\rightarrow \infty}\sup_{f,g \in \mathcal{B}}|\langle Tf_{z},g_{z}\rangle|=0
$$
for every bounded $\mathcal{B} \subseteq \DD(\mathbb{R}^n)$, where $\displaystyle\lim_{z\rightarrow \infty}$ means $\displaystyle \lim_{R\rightarrow \infty} \sup_{z \in D((0,1),R)^c}$. While these properties are clearly necessary for the $L^2(\mathbb{R}^n)$-boundedness/compactness of $T$ by Cauchy-Schwarz, they are also sufficient for the boundedness/compactness of cancellative SIOs, i.e., SIOs $T$ such that $T1=T^t1=0$. As observed in \cite{MS2023}, the $L^2(\mathbb{R}^n)$-compactness only requires the following weaker version of the weak compactness property: \looseness=-1
$$
    \lim_{z\rightarrow \infty}\sup_{w\in D(z,R)}|\langle T\psi_{z},\psi_{w}\rangle|=0.
$$
for every $R>0$. We extend this formulation to the bi-parameter setting as follows. 
\begin{defin}\label{BiparameterWeakCompactnessDefinition}
    An operator $T\colon \mathcal{D}(\mathbb{R}^{n_1})\otimes\mathcal{D}(\mathbb{R}^{n_2})\rightarrow (\mathcal{D}(\mathbb{R}^{n_1})\otimes\mathcal{D}(\mathbb{R}^{n_2}))'$ satisfies the product weak compactness property if 
    $$
        \lim_{z\rightarrow \infty}\sup_{w \in D(z_1,R)\times D(z_2,R)}|\langle T\psi_{z},\psi_{w}\rangle| = 0
    $$
    for every $R>0$. 
\end{defin}
\noindent Observe that product weak compactness is invariant under full and partial transposes. 
\begin{prop}\label{productWCnecessary}
    If a linear operator is compact on $L^2(\mathbb{R}^{n_1}\times\mathbb{R}^{n_2})$, then it satisfies the product weak compactness property.
\end{prop}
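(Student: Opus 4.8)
The plan is to derive the product weak compactness property from the weak convergence $\psi_z \rightharpoonup 0$ in $L^2(\R^{n_1}\times\R^{n_2})$ as $z\to\infty$, so the bulk of the work lies in establishing that convergence. Assume $T$ is compact on $L^2(\R^{n_1}\times\R^{n_2})$. Since each $\psi_{w_i}$ arises from $\psi$ by an $L^2$-norm-preserving translation and dilation, $\|\psi_w\|_{L^2(\R^{n_1}\times\R^{n_2})}$ equals a constant $c_\psi$ independent of $w$, so for every $R>0$ Cauchy--Schwarz gives
$$
\sup_{w\in D(z_1,R)\times D(z_2,R)}|\langle T\psi_z,\psi_w\rangle|\le c_\psi\,\|T\psi_z\|_{L^2(\R^{n_1}\times\R^{n_2})},
$$
and it suffices to show $\|T\psi_z\|_{L^2}\to 0$ as $z\to\infty$. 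Granting $\psi_z\rightharpoonup 0$, this follows from the standard fact that a compact operator sends bounded weakly null sequences to norm-null ones (and $\lim_{z\to\infty}$ may be tested along sequences): if $\|T\psi_z\|_{L^2}\not\to 0$ there are $\epsilon>0$ and $z_k\to\infty$ with $\|T\psi_{z_k}\|_{L^2}\ge\epsilon$, and compactness of $T$ together with boundedness of $\{\psi_{z_k}\}$ produces a subsequence along which $T\psi_{z_k}\to y$ in norm with $\|y\|_{L^2}\ge\epsilon$, while $\psi_{z_k}\rightharpoonup 0$ forces $T\psi_{z_k}\rightharpoonup 0$ and hence $y=0$, a contradiction. (Alternatively, one may approximate $T$ in operator norm by a finite-rank operator and conclude directly.)

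To prove $\psi_z\rightharpoonup 0$, I would first record the one-parameter analogue: for every dimension $m$ and every $h\in L^2(\R^m)$, $\langle\psi_\zeta,h\rangle\to 0$ as $\zeta=(x,t)\to\infty$ in $\R^{m+1}_+$. Since $\|\psi_\zeta\|_{L^2(\R^m)}=\|\psi\|_{L^2(\R^m)}$ for all $\zeta$, a density argument reduces this to $h\in\mathcal{D}(\R^m)$, and for such an $h$ with $\text{supp}\,h\subseteq B(0,\rho)$ I would use the elementary estimates
$$
|\langle\psi_\zeta,h\rangle|\le t^{m/2}\|\psi\|_{L^1}\|h\|_{L^\infty},\qquad |\langle\psi_\zeta,h\rangle|\le t^{-m/2}\|\psi\|_{L^\infty}\|h\|_{L^1},
$$
together with the fact that $\langle\psi_\zeta,h\rangle=0$ whenever $|x|\ge t+\rho$, since then $\text{supp}\,\psi_\zeta\subseteq B(x,t)$ is disjoint from $\text{supp}\,h$. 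These three observations confine, for each $\epsilon>0$, the set $\{\zeta:|\langle\psi_\zeta,h\rangle|\ge\epsilon\}$ to a box $\{(x,t):t_0\le t\le t_1,\ |x|\le t_1+\rho\}$, which is compact in $\R^{m+1}_+$ and hence contained in some hyperbolic ball $D((0,1),R)$; this is exactly the asserted decay.

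Finally I would transfer to the product setting. As finite sums of elementary tensors are dense in $L^2(\R^{n_1}\times\R^{n_2})$ and $\|\psi_z\|_{L^2}=c_\psi$ for all $z$, it is enough to check $\langle\psi_z,g_1\otimes g_2\rangle\to 0$ as $z\to\infty$ for $g_i\in\mathcal{D}(\R^{n_i})$. Here $\langle\psi_z,g_1\otimes g_2\rangle=\langle\psi_{z_1},g_1\rangle\langle\psi_{z_2},g_2\rangle$ with $|\langle\psi_{z_i},g_i\rangle|\le\|\psi\|_{L^2}\|g_i\|_{L^2}$ in all cases; since $z\to\infty$ in the product space means $z$ eventually leaves $D((0,1),R)\times D((0,1),R)$ for every $R$, and any such $z$ has $z_1\notin D((0,1),R)$ or $z_2\notin D((0,1),R)$, the one-parameter estimate makes one of the two factors small while the other stays bounded, so $\langle\psi_z,g\rangle\to 0$. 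The only genuinely nontrivial ingredient is the one-parameter weak-null estimate for the wavelet frame; the reduction to it is soft functional analysis, and the one step deserving mild care is the bookkeeping in this last paragraph, where $z\to\infty$ in the product forces only one of the two coordinates to escape to infinity.
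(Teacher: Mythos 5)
Your argument is correct and is essentially the paper's proof: Cauchy--Schwarz reduces the matter to $\|T\psi_z\|_{L^2(\R^{n_1}\times\R^{n_2})}\to 0$, which follows from compactness once $\psi_z\rightharpoonup 0$ weakly. The only difference is that the paper takes the weak-null property of the frame $\{\psi_z\}$ as known, whereas you verify it via the support/size estimates and a density argument; that verification is sound.
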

\begin{proof}
    Let $T$ be compact on $L^2(\mathbb{R}^{n_1}\times\mathbb{R}^{n_2})$. Since $\psi_z \rightarrow 0$ weakly in $L^2(\R^{n_1} \times \R^{n_2})$ as $z \rightarrow \infty$, the compactness of $T$ implies that 
    \[
        \limsup_{z\rightarrow \infty}\sup_{w \in D(z_1,R)\times D(z_2,R)}|\langle T\psi_{z},\psi_{w}\rangle| \lesssim \limsup_{z \rightarrow \infty} \|T \psi_z \|_{L^2(\R^{n_1} \times \R^{n_2})} = 0
    \]
    for any $R > 0$.
\end{proof}

In addressing compactness, the role of $\text{BMO}(\R^n)$ is played by its subspace $\text{CMO}(\mathbb{R}^n)$, which was originally defined to be the closure of the continuous functions that vanish at infinity, $C_0(\mathbb{R}^n)$, in $\text{BMO}(\mathbb{R}^n)$ by Coifman and Weiss in \cite{CW1977}*{p. 638}. We extend this definition to the bi-parameter setting using the Chang-Fefferman product $\text{BMO}(\mathbb{R}^{n_1}\times\mathbb{R}^{n_2})$ of \cite{CF1980}.
\begin{defin}\label{CMODefinition}
    The space $\text{CMO}(\mathbb{R}^{n_1}\times\mathbb{R}^{n_2})$ is the closure of $C_0(\mathbb{R}^{n_1}\times\mathbb{R}^{n_2})$ in the space $\text{BMO}(\mathbb{R}^{n_1}\times\mathbb{R}^{n_2})$. 
\end{defin}

We next introduce the mixed weak compactness/CMO property, which is motivated by the weak boundedness/BMO condition introduced by Ou in \cite{O2017}.\looseness=-1
\begin{defin}
        A bi-parameter CZO $T$ satisfies the mixed weak compactness/CMO property if $\langle T(1 \otimes \psi_z), \cdot \otimes \psi_w \rangle \in \text{CMO}(\R^{n_1})$ for every $z,w \in \R_+^{n_2+1}$, 
         \[
            \lim_{z \rightarrow \infty} \sup_{w \in D(z,M)} \|\langle T(1 \otimes \psi_z) , \cdot \otimes \psi_{w} \rangle \|_{\text{BMO}(\R^{n_1})} = 0
         \]
        for every $M \geq 0$, $\langle T(\psi_z \otimes 1 ), \psi_w \otimes \cdot \rangle \in \text{CMO}(\R^{n_2})$ for every $z,w \in \R_+^{n_1+1}$, 
         \[
            \lim_{z \rightarrow \infty} \sup_{w \in D(z,M)} \|\langle T(\psi_z \otimes 1 ), \psi_{w} \otimes \cdot \rangle\|_{\text{BMO}(\R^{n_2})} = 0
         \]
         for every $M \ge 0$, and the same four conditions all hold with $T^t$ in place of $T$.
    \end{defin}
    \noindent Observe that the mixed weak compactness/CMO property is invariant under full and partial transposes.

    \begin{prop}\label{WC/CMOnecessary}
        If a bi-parameter CZO $T$ is compact on $L^2(\mathbb{R}^{n_1}\times\mathbb{R}^{n_2})$, then it satisfies the mixed weak compactness/CMO property.
    \end{prop}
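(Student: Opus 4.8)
The plan is to reduce each of the conditions defining the mixed weak compactness/CMO property to the one-parameter compact $T1$ theorem, Theorem B, together with an operator-norm decay estimate coming from the compactness of $T$. By the symmetry of the definition it is enough to treat the conditions on $\langle T(1\otimes\psi_z),\cdot\otimes\psi_w\rangle$ with $z,w\in\R_+^{n_2+1}$: the conditions on $\langle T(\psi_z\otimes 1),\psi_w\otimes\cdot\rangle$ follow after interchanging the roles of $\R^{n_1}$ and $\R^{n_2}$ (and using $K_2$ in place of $K_1$), and the conditions with $T^t$ in place of $T$ hold because $T^t$ is again a compact bi-parameter CZO and the argument is symmetric in $T$ and $T^t$.

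Fix $z,w\in\R_+^{n_2+1}$ and let $P_v\colon L^2(\R^{n_1})\to L^2(\R^{n_1}\times\R^{n_2})$ denote the bounded operator $P_vf_1:=f_1\otimes\psi_v$, whose norm is independent of $v$. Then $S_{z,w}:=P_w^{*}TP_z$, i.e.\ the operator with $\langle S_{z,w}f_1,g_1\rangle=\langle T(f_1\otimes\psi_z),g_1\otimes\psi_w\rangle$, is a one-parameter $\delta$-SIO on $\R^{n_1}$ with kernel $k_{z,w}(x,y)=\langle K_1(x,y)\psi_z,\psi_w\rangle$; the size and smoothness bounds \eqref{kernelsize}--\eqref{kernelsmoothness} for the $\delta\text{CZO}(\R^{n_2})$-valued kernel $K_1$ give $\|S_{z,w}\|_{\delta\text{CZO}(\R^{n_1})}\lesssim\|T\|_{\delta\text{CZO}(\R^{n_1}\times\R^{n_2})}$, and $S_{z,w}$ is compact on $L^2(\R^{n_1})$ since it is a composition of bounded operators with the compact operator $T$. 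Applying the necessity direction of Theorem B to $S_{z,w}$ then yields $\langle T(1\otimes\psi_z),\cdot\otimes\psi_w\rangle=S_{z,w}1\in\text{CMO}(\R^{n_1})$, which is the membership part of the requirement.

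For the limiting requirement I would first record that $\|TP_v\|_{L^2(\R^{n_1})\to L^2(\R^{n_1}\times\R^{n_2})}\to 0$ as $v\to\infty$, and similarly for $T^t$: otherwise there exist $v_k\to\infty$ and unit vectors $f_{1,k}$ with $\|T(f_{1,k}\otimes\psi_{v_k})\|_{L^2}\geq\varepsilon$, but $f_{1,k}\otimes\psi_{v_k}$ lies in the unit ball and converges weakly to $0$ (since $\psi_{v_k}\to 0$ weakly in $L^2(\R^{n_2})$, as in the proof of Proposition \ref{productWCnecessary}), so compactness of $T$ forces a subsequence of $T(f_{1,k}\otimes\psi_{v_k})$ to converge in norm to a nonzero vector that is simultaneously a weak limit of vectors tending weakly to $0$ --- a contradiction. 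Now fix $M\geq 0$ and let $z\to\infty$ with $w\in D(z,M)$, so that $w\to\infty$ as well and hence $\sup_{w\in D(z,M)}\|T^tP_w\|\to 0$. By $H^1$--$\text{BMO}$ duality, $\|S_{z,w}1\|_{\text{BMO}(\R^{n_1})}\approx\sup_a|\langle S_{z,w}1,a\rangle|$, the supremum over $H^1(\R^{n_1})$-atoms $a$ supported in cubes $Q$. Given such an atom, choose $\phi_R\in\mathcal{D}(\R^{n_1})$ with $\mathbf{1}_{B(x_Q,R\ell(Q))}\leq\phi_R\leq\mathbf{1}_{B(x_Q,2R\ell(Q))}$ and split $\langle S_{z,w}1,a\rangle=\langle S_{z,w}\phi_R,a\rangle+\langle S_{z,w}(1-\phi_R),a\rangle$. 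The first term equals $\langle\phi_R\otimes\psi_z,T^t(a\otimes\psi_w)\rangle$, so $|\langle S_{z,w}\phi_R,a\rangle|\leq\|\phi_R\|_{L^2}\,\|T^tP_w\|\,\|a\|_{L^2}\lesssim R^{n_1/2}\|T^tP_w\|$, using $\|\phi_R\|_{L^2}\lesssim(R\ell(Q))^{n_1/2}$ and $\|a\|_{L^2}\lesssim\ell(Q)^{-n_1/2}$. In the second term $1-\phi_R$ vanishes near $\text{supp}\,a$, so the first-variable kernel representation, the cancellation $\int a=0$ (subtracting $k_{z,w}(x_Q,y)$), and the smoothness bound for $K_1$ give $|\langle S_{z,w}(1-\phi_R),a\rangle|\lesssim\|K_1\|_{\delta\text{CZO}(\R^{n_2}),\delta}\,\ell(Q)^{\delta}(R\ell(Q))^{-\delta}=\|K_1\|_{\delta\text{CZO}(\R^{n_2}),\delta}\,R^{-\delta}$. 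Choosing $R=R(z)\to\infty$ slowly enough that $R(z)^{n_1/2}\sup_{w\in D(z,M)}\|T^tP_w\|\to 0$, both estimates become uniform in $a$ and in $w\in D(z,M)$ and tend to $0$, giving $\lim_{z\to\infty}\sup_{w\in D(z,M)}\|S_{z,w}1\|_{\text{BMO}(\R^{n_1})}=0$.

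The main obstacle is the estimate of the second term. The kernel $k_{z,w}$ does not itself decay as $z\to\infty$ --- the coefficients $K_1(x,y)$ are merely CZOs on $L^2(\R^{n_2})$, not compact ones --- so the smallness of the tail cannot be extracted from the kernel directly. The point, and what I expect to require the most care in the writeup, is the interplay between the two bounds $R^{n_1/2}\|T^tP_w\|$ and $R^{-\delta}$: the cancellation $\int a=0$ upgrades the tail to decay $R^{-\delta}$ in the truncation radius, while the operator-norm decay $\|T^tP_w\|\to 0$ leaves just enough room to send $R\to\infty$; balancing the two, and verifying that the balancing is uniform in $w\in D(z,M)$ and in the atom $a$, closes the argument.
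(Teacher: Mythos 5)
Your proof is correct, and your first step is the same as the paper's: $T_{z,w}=P_w^*TP_z$ is a compact one-parameter $\delta$-CZO, so the necessity half of Theorem B gives $\langle T(1\otimes\psi_z),\cdot\otimes\psi_w\rangle\in\text{CMO}(\R^{n_1})$. Likewise, both you and the paper extract an operator-norm decay from compactness via weak convergence of the wavelets (the paper's version is $\sup_w\|(T_{z,w})^t\|_{L^2\to L^2}\le V(z)\to0$; yours is $\|T^tP_w\|\to0$ together with the observation that $w\in D(z,M)$ forces $w\to\infty$). Where you genuinely diverge is in converting this $L^2$ decay into decay of $\|T_{z,w}1\|_{\text{BMO}(\R^{n_1})}$: the paper uses the uniform $H^p(\R^{n_1})\to L^p(\R^{n_1})$ bounds of Alvarez--Milman for $\tfrac{n_1}{n_1+\delta}<p<1$, a Hadamard three-lines argument on atoms to get $\|(T_{z,w})^t\|_{H^1\to L^1}\lesssim V(z)^{2(1-\theta)}$, and then $H^1$--BMO duality, whereas you bypass $H^p$ theory and interpolation entirely by pairing $T_{z,w}1$ against an atom, splitting $1=\phi_R+(1-\phi_R)$ at scale $R\ell(Q)$, bounding the local piece by $R^{n_1/2}\|T^tP_w\|$ and the tail by $R^{-\delta}$ via the first-variable smoothness in \eqref{kernelsmoothness} and $\int a=0$, and then optimizing $R$. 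Both routes yield quantitative rates; yours is more elementary and self-contained (the splitting is just the standard definition of the pairing $\langle T_{z,w}1,a\rangle$, and in a final write-up you should note the harmless constraint $R\gtrsim\sqrt{n_1}$ needed for the smoothness estimate, and that the implicit constants are uniform in the atom), while the paper's interpolation device is not wasted effort on its end since essentially the same lemma (Lemma \ref{HPInerpolation}) is reused to prove the endpoint result, Theorem \ref{EndpointCompactness}.
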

    \begin{proof} 
    Since the operator $T_{z,w}$ given by $T_{z,w}f=\langle T(f \otimes \psi_z), \cdot \otimes \psi_w \rangle$ is a compact one-parameter CZO, we have that $\langle T(1 \otimes \psi_z), \cdot \otimes \psi_w \rangle \in \text{CMO}(\R^{n_1})$ by Theorem B. 
    
    For the second condition, note that $\{T_{z,w}\}_{z,w \in \mathbb{R}^{n_2+1}_+}$ is a bounded subset of $\delta \text{CZO}(\mathbb{R}^{n_1})$ as the $T_{z,w}$ are clearly uniformly bounded on $L^2(\mathbb{R}^{n_1})$ and the kernels of $T_{z,w}$, which are given by $\langle K(x,y) \psi_z, \psi_w \rangle$ for a $\delta$CZO-valued kernel $K$, satisfy uniform size and smoothness estimates in $z,w \in \mathbb{R}^{n_2+1}_+$. Therefore we have by the estimate of \cite{AM1986} that \looseness=-1
    $$
        \sup_{z,w \in \mathbb{R}^{n_2+1}_+} \|(T_{z,w})^t\|_{H^p(\R^{n_1}) \rightarrow L^p(\R^{n_1})} \lesssim 1
    $$ 
    for $\frac{n_1}{n_1+\delta}<p<1$, where $H^p(\mathbb{R}^{n_1})$ denotes the classical real Hardy space. We also have that 
    $$
        \sup_{w \in \mathbb{R}^{n_2+1}_+} \|(T_{z,w})^t\|_{L^2(\R^{n_1}) \rightarrow L^2(\R^{n_1})} \leq V(z),
    $$
    where $V(z)\rightarrow 0$ as $z \rightarrow \infty$, since if there exist $z_n \rightarrow \infty$ and $w_n \in \mathbb{R}^{n_2+1}_+$ such that $\|(T_{z_n, w_n})^t\|_{L^2(\R^{n_1}) \rightarrow L^2(\R^{n_1})} \geq \delta > 0$, then there are $L^2(\mathbb{R}^{n_1})$-normalized $f_n$ and $g_n$ satisfying $|\langle T(f_n \otimes \psi_{z_n}), g_n \otimes \psi_{w_n} \rangle| \geq \frac{\delta}{2}$, but then $f_n \otimes \psi_{z_n} \rightarrow 0$ weakly, violating the compactness of $T$. \looseness=-1
    
    We now claim that for $p \in (\frac{n_1}{n_1+\delta}, 1)$ and $\theta \in (0,1)$ satisfying $p \theta + 2(1-\theta) = 1$, we have
    $$
        \sup_{w\in\mathbb{R}^{n_2+1}_+} \|(T_{z,w})^t\|_{H^1(\R^{n_1}) \rightarrow L^1(\R^{n_1})} \lesssim V(z)^{2(1-\theta)} =: V'(z).
    $$ 
    Indeed, if $a$ is an $L^\infty(\mathbb{R}^{n_1})$ atom for $H^1(\mathbb{R}^{n_1})$, meaning $\text{supp}\, a \subseteq Q$, $\int a = 0$, and $\|a\|_{L^{\infty}(\mathbb{R}^{n_1})} \leq |Q|^{-1}$ for a cube $Q$, then $\| a \|_{H^p(\mathbb{R}^{n_1})} \lesssim |Q|^{\frac{1}{p} - 1}$ and $\|a\|_{L^2(\mathbb{R}^{n_1})} \leq |Q|^{-\frac{1}{2}}$. Considering the holomorphic function $\zeta \mapsto \int_{\R^n} |(T_{z,w})^ta(x)|^{\zeta}\,dx$, we find the estimates 
    $$
        \left| \int_{\R^n} |(T_{z,w})^ta(x)|^{\zeta}\,dx \right| \lesssim \|(T_{z,w})^t\|_{H^p(\mathbb{R}^{n_1}) \rightarrow L^p(\mathbb{R}^{n_1})}^p |Q|^{1-p} \lesssim |Q|^{1-p}
    $$
    and 
    $$
        \left| \int_{\R^n} |(T_{z,w})^ta(x)|^{\zeta}\,dx \right| \lesssim \|(T_{z,w})^t\|_{L^2(\mathbb{R}^{n_1}) \rightarrow L^2(\mathbb{R}^{n_1})}^2 |Q|^{-1} \lesssim V(z)^2 |Q|^{-1}
    $$ 
    when $\Re \zeta = p$ and $\Re \zeta = 2$, respectively. By the Hadamard three-lines lemma, we get
    $$
        \int_{\R^n} |(T_{z,w})^ta(x)|\,dx \lesssim |Q|^{(1-p)\theta} |Q|^{\theta-1} V(z)^{2(1-\theta)} = V(z)^{2(1-\theta)},
    $$
    which implies the claimed estimate by the atomic decomposition of $H^1(\mathbb{R}^{n_1})$. 
    
    Now, by $H^1$-BMO duality, we have 
    $$
        \sup_{w\in\mathbb{R}^{n_2+1}_+} \|T_{z,w}\|_{L^\infty(\R^{n_1}) \rightarrow \text{BMO}(\R^{n_1})} \lesssim V'(z).
    $$ In particular,
    \[
        \sup_{w\in\mathbb{R}^{n_2+1}_+}\| \langle T(1 \otimes \psi_z), \cdot \otimes \psi_w\rangle\|_{\text{BMO}(\R^{n_1})} = \sup_{w\in\mathbb{R}^{n_2+1}_+} \|T_{z,w} 1\|_{\text{BMO}(\R^{n_1})} \lesssim V'(z),
    \]
    and therefore
    \[
        \lim_{z \rightarrow \infty} \sup_{w \in D(z,M)} \|\langle T(1 \otimes \psi_z) , \cdot \otimes \psi_{w} \rangle \|_{\text{BMO}(\R^{n_1})} = 0.
    \]
    
    The other conditions on the form $\langle T(\psi_z \otimes 1 ), \psi_{w} \otimes \cdot \rangle$ are handled similarly, and the invariance of compactness under full transposes give the analogous conditions for $T^t$.
    \end{proof}

\subsection{Comparison of Theorem \ref{bi-parameterCZOCompactnessTheorem} and Theorem C}\label{ConditionComparisonSection}

The main appeal to our argument for Theorem \ref{bi-parameterCZOCompactnessTheorem} lies in recognizing the decomposition of a bi-parameter CZO into top-level paraproducts, partial paraproducts, and a fully cancellative bi-parameter CZO. This perspective enables us to apply our new abstract formalism for partially localized operators, Theorem \ref{Abstractbi-parameterCompactnessTheorem}, to the partial paraproducts and the fully cancellative part of the bi-parameter CZO, while we handle the top-level paraproducts separately. We obtain the partial localization of the partial paraproducts and fully cancellative bi-parameter CZO from well-known one-parameter estimates through Lemma \ref{VarpiCoefficientsEstimate} and Lemma \ref{FullyCancellativeCoefficientEstimate} below. We contrast this approach with the proof of Theorem C from \cite{CYY2024}, which is based on directly applying hard estimates involving dyadic rectangles. Further, the argument for Theorem C requires the formulation of bi-parameter SIOs in terms of mixed-type scalar conditions on bi-parameter kernels, whereas we use Journ\'e's original vector-valued definition. 

In addition to the fact that we obtain the necessity of conditions in Theorem \ref{bi-parameterCZOCompactnessTheorem}, the first major difference between the statements of Theorem \ref{bi-parameterCZOCompactnessTheorem} and Theorem C is that we place no additional vanishing assumptions on the kernels of $T$ in the form of compact full and partial kernel representations. On the other hand, we note that the product weak compactness property of Theorem C is a nonsmooth and noncancellative version of the diagonal condition 
\begin{align*}
    \lim_{z\rightarrow \infty}|\langle T\psi_z,\psi_z\rangle| =0,
\end{align*}
which is clearly implied by our product weak compactness of Definition \ref{BiparameterWeakCompactnessDefinition} as our condition includes both on-diagonal and near-diagonal matrix coefficients $\langle T\psi_z,\psi_w\rangle$. 

The second main difference between the statements of Theorem \ref{bi-parameterCZOCompactnessTheorem} and Theorem C is that our mixed weak compactness/CMO condition involves vanishing estimates on terms of the form $\| \langle T(1 \otimes \psi_z), \cdot \otimes \psi_w \rangle\|_{\text{BMO}(\mathbb{R}^{n_1})}$ instead of on those of the form
\begin{align}\label{ComparisonDisplay}
    \sup_{a} |\langle T(1_Q \otimes 1_R), a \otimes 1_R \rangle| \approx \| \langle T(1_Q \otimes 1_R), \cdot \otimes 1_R \rangle \|_{L^1(Q) / \text{constants}},
\end{align}
where $Q\subseteq \mathbb{R}^{n_1}$ and $R\subseteq\mathbb{R}^{n_2}$ are cubes and the supremum is taken over functions $a$ on $\mathbb{R}^{n_1}$ such that $\text{supp}\,a \subseteq Q$, $\|a\|_{L^\infty(\mathbb{R}^{n_1})} \leq 1$, and $\int a = 0$ as in the diagonal CMO property of Theorem C. We illustrate how additional compact partial kernel assumptions, along with assumptions similar to \eqref{ComparisonDisplay}, imply the vanishing estimates in the mixed weak compactness/CMO property. Let \looseness=-1
$$
    \text{osc}_\lambda(f;Q) := \inf_{c \in \C} \inf_{\substack{E \subseteq Q\\ |E| \leq \lambda |Q|}} \|(f-c)1_{Q \setminus E}\|_{L^\infty(\mathbb{R}^n)},
$$
and note that 
$$
    \text{osc}_\lambda (f;Q) \leq \lambda^{-\frac{1}{p}} \inf_{c \in \C} |Q|^{-\frac{1}{p}} \|f-c\|_{L^{p,\infty}(Q)}
$$
for any $p \in (0,\infty)$, where $\|f\|_{L^{p,\infty}(Q)}:= \sup_{\lambda>0} \lambda|\{x \in Q\colon |f(x)|>\lambda\}|^{1/p}$ is the usual weak-$L^p$ quasi-norm. In particular, this notion of oscillation is smaller than the $L^1(Q)/\text{constants}$ norm used in \eqref{ComparisonDisplay}. See \cite{HNVW23}*{Chapter 11} for a general treatment of oscillation.\looseness=-1

\begin{prop}\label{ComparisonProp}
    Let  $T \in \delta \text{CZO}(\R^n)$. If for each cube $Q$ there exists a function $\phi_Q$ such that $\|\phi_Q\|_{L^\infty(\mathbb{R}^n)} \leq 1$, $\phi_Q 1_Q \equiv 1_Q$, and
    $$
        \sup_{Q} \text{osc}_{2^{-3-n}} (T \phi_Q; Q) \leq A,
    $$
    then
    $$
        \|T 1\|_{\text{BMO}(\R^n)} \lesssim
        A+\|K\|_{\delta}.
    $$
\end{prop}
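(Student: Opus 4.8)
The plan is to estimate the $\text{BMO}(\mathbb{R}^n)$-norm of $T1$ via its standard characterization by mean oscillation over cubes: it suffices to show that for every cube $Q$,
\[
    \inf_{c \in \C} \frac{1}{|Q|} \int_Q |T1 - c| \lesssim A + \|K\|_\delta.
\]
Fix a cube $Q$ and split $1 = \phi_Q + (1 - \phi_Q)$. Since $\phi_Q 1_Q \equiv 1_Q$, the tail $1 - \phi_Q$ vanishes on $Q$, so we may use the kernel representation to control $T(1 - \phi_Q)$ on $Q$: writing $x_Q$ for the center of $Q$, the size and smoothness estimates \eqref{kernelsize} and \eqref{kernelsmoothness} give, by the usual Calder\'on-Zygmund tail computation,
\[
    \big| T(1-\phi_Q)(x) - T(1-\phi_Q)(x_Q) \big| \lesssim \|K\|_\delta
\]
for $x \in Q$ (the difference makes the non-integrable-at-infinity size bound irrelevant and converts it into a convergent integral of the smoothness bound over $|y - x_Q| \gtrsim \ell(Q)$). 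Hence, choosing $c$ to absorb $T(1-\phi_Q)(x_Q)$ plus the constant coming from the $\phi_Q$ term, the tail contributes $O(\|K\|_\delta)$ to the mean oscillation of $T1$ over $Q$.

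It remains to bound the mean oscillation of $T\phi_Q$ over $Q$. This is exactly where the oscillation hypothesis enters: the assumption $\text{osc}_{2^{-3-n}}(T\phi_Q; Q) \le A$ furnishes, for each $Q$, a constant $c_Q$ and an exceptional set $E \subseteq Q$ with $|E| \le 2^{-3-n}|Q|$ such that $\|(T\phi_Q - c_Q) 1_{Q \setminus E}\|_{L^\infty} \le A$. On $Q \setminus E$ the integrand is pointwise bounded by $A$, so that part contributes at most $A$ to the average. For the exceptional set $E$, I would use Chebyshev together with the $L^2(\mathbb{R}^n)$-boundedness of $T$ (available since $T \in \delta\text{CZO}(\mathbb{R}^n)$), or more simply a John-Nirenberg-type argument: having already shown $T1 = T\phi_Q + T(1-\phi_Q)$ has small oscillation away from $E$ and $|E|$ is a fixed small fraction of $|Q|$, one can bootstrap. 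Concretely, write $\frac{1}{|Q|}\int_E |T\phi_Q - c_Q| \le |Q|^{-1}\|T\phi_Q - c_Q\|_{L^2(Q)}|E|^{1/2}$; controlling $\|T\phi_Q - c_Q\|_{L^2(Q)}$ requires a good choice of $c_Q$ and the estimate on $Q \setminus E$, plus $\|T\|_{L^2 \to L^2}$ applied to $\phi_Q 1_{cQ}$ for a slightly dilated cube — though one must be a little careful since $\phi_Q$ need not be supported near $Q$, so it is cleaner to instead re-run the decomposition $\phi_Q = \phi_Q 1_{3Q} + \phi_Q 1_{(3Q)^c}$, estimate $T(\phi_Q 1_{(3Q)^c})$ on $Q$ by the kernel as above, and use $L^2$-boundedness only on the compactly supported piece $\phi_Q 1_{3Q}$, whose $L^2$-norm is $\lesssim |Q|^{1/2}$.

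The step I expect to be the main obstacle is precisely this last one: converting the oscillation control on $T\phi_Q$ (which is an $L^\infty$ bound off a small exceptional set, with no a priori integrability of $T\phi_Q$ itself) into a genuine $L^1$-average bound over all of $Q$, since the $\phi_Q$ are not assumed to have any support localization and the exceptional set $E$ depends on $Q$. The resolution is the CZ splitting of $\phi_Q$ into a local piece $\phi_Q 1_{3Q}$ (handled by $L^2(\mathbb{R}^n)$-boundedness, giving an $L^2$ bound over $\mathbb{R}^n$ and hence over $E$ via Cauchy-Schwarz and $|E| \le 2^{-3-n}|Q|$) and a global piece $\phi_Q 1_{(3Q)^c}$ (handled, like the tail $1 - \phi_Q$ above, by the smoothness estimate \eqref{kernelsmoothness}, contributing $O(\|K\|_\delta)$ after subtracting the value at $x_Q$). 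Assembling the three contributions — $A$ from $Q \setminus E$, $O(A + \|K\|_\delta)$ from $E$ via Cauchy-Schwarz, and $O(\|K\|_\delta)$ from the global tails — and taking the supremum over $Q$ yields $\|T1\|_{\text{BMO}(\mathbb{R}^n)} \lesssim A + \|K\|_\delta$.
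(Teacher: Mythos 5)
There is a genuine gap, and it is exactly at the step you flagged as the main obstacle. Your resolution of the exceptional set $E$ uses Cauchy--Schwarz together with the $L^2(\mathbb{R}^n)$-boundedness of $T$ applied to $\phi_Q 1_{3Q}$, whose $L^2$-norm is $\approx |Q|^{1/2}$; this produces a term of size $\|T\|_{L^2 \to L^2}$ in the final estimate, so what you actually prove is $\|T1\|_{\text{BMO}(\R^n)} \lesssim A + \|K\|_{\delta} + \|T\|_{L^2\to L^2}$. That is strictly weaker than the stated conclusion (whose implicit constant cannot depend on $T$), and it is not a harmless loss: without the oscillation hypothesis the classical theory already gives $\|T1\|_{\text{BMO}} \lesssim \|T\|_{L^2\to L^2} + \|K\|_\delta$, and in the intended application (Corollary \ref{ComparisonCorollary}, feeding into the mixed weak compactness/CMO property) one needs the bound to vanish like $V(z,w)$ while the $L^2$ operator norms of the restricted operators $T_{z,w}$ are only uniformly bounded. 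With only kernel bounds and oscillation control off $E$, there is no way to control $\int_E |T\phi_Q - c_Q|$ quantitatively in terms of $A$ and $\|K\|_\delta$ alone, so the route through a genuine $L^1$-mean oscillation of $T1$ cannot be repaired within your framework.

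The missing idea in the paper's argument is to never integrate $T\phi_Q$ over the exceptional set at all: one stays at the level of the functional $\text{osc}_\lambda$, which tolerates discarding a set of proportion $\lambda$. Writing $1 = \phi_Q + 1_{(3Q)^c}(1-\phi_Q) + \sum_{Q'}1_{Q'}(1-\phi_Q)$ over the $3^n-1$ cubes $Q'$ adjacent to $Q$, subadditivity gives $\text{osc}_{2\lambda}(T1;Q) \le \text{osc}_\lambda(T\phi_Q;Q) + \text{osc}_\lambda(T(1-\phi_Q);Q)$ with $\lambda = 2^{-3-n}$; the first term is $\le A$ by hypothesis, and the second is dominated by normalized $L^1(Q)$ (modulo constants) norms that are handled purely by the kernel: the $(3Q)^c$ piece by the smoothness estimate \eqref{kernelsmoothness} after subtracting a constant, and each adjacent piece by $\int_Q\int_{Q'}|K(x,y)|\,dy\,dx \lesssim \|K\|_\delta |Q|$. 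One then concludes with the John--Str\"omberg-type result \cite{HNVW23}*{Proposition 11.1.24}, which says $\|T1\|_{\text{BMO}(\R^n)} \lesssim \sup_Q \text{osc}_{2\lambda}(T1;Q)$. A secondary, fixable point: your claimed pointwise bound $|T(1-\phi_Q)(x) - T(1-\phi_Q)(x_Q)| \lesssim \|K\|_\delta$ for all $x \in Q$ is false in general, since $1-\phi_Q$ is only assumed to vanish on $Q$ (not on $3Q$), and the contribution from the adjacent region is only controlled in $L^1(Q)$-average, not pointwise near $\partial Q$ --- this is exactly why the decomposition above separates the adjacent cubes.
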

\begin{proof}
    Put $\lambda = 2^{-3-n}$. Let $Q$ be a cube in $\R^{n}$ and $\mathcal{F}$ be the collection of the $3^n - 1$ adjacent cubes to $Q$. Then, since $\phi_Q 1_Q \equiv 1_Q$, we have 
    $$
        1 = \phi_Q + (1-\phi_Q) = \phi_Q + 1_{(3Q)^c}(1-\phi_Q) + \sum_{Q' \in \mathcal{F}} 1_{Q'} (1-\phi_Q),
    $$
    and so we can estimate
    \begin{align*}
        &\text{osc}_{2\lambda } (T 1; Q) \leq  \text{osc}_\lambda (T \phi_Q; Q) + \text{osc}_\lambda (T(1-\phi_Q); Q) \\
        & \quad\quad\lesssim A + |Q|^{-1} \|T (1_{(3Q)^c} (1-\phi_Q)) \|_{L^1(Q) / \text{constants}} + \sum_{Q' \in \mathcal{F}} |Q|^{-1} \|T(1_{Q'} (1-\phi_Q))\|_{L^1(Q)}.
    \end{align*}
    Since $\text{supp } (1_{(3Q)^c} (1-\phi_{Q})) \subseteq (3Q)^c$, we have from standard estimates that
    \begin{align*}
        |Q|^{-1} \|T (1_{(3Q)^c} (1-\phi_{Q}))\|_{L^1(Q) / \text{constants}} \lesssim \|K\|_{\delta} \|1-\phi_{Q}\|_{L^\infty(\mathbb{R}^n)}  \lesssim \|K\|_{\delta}.
    \end{align*}
    For the terms involving adjacent cubes $Q'$, we see
    \begin{align*}
        |Q|^{-1}\|T (1_{Q'}(1-\phi_Q))\|_{L^1(Q)} \leq |Q|^{-1}\|1 - \phi_Q\|_{L^\infty(\mathbb{R}^n)} \int_Q \int_{Q'} |K(x,y)| \, dydx \lesssim \|K\|_{\delta}.
    \end{align*}
    By \cite{HNVW23}*{Proposition 11.1.24}, we conclude 
    $$
        \|T1\|_{\text{BMO}(\R^n)} \lesssim \sup_Q \text{osc}_{2 \lambda} (T1; Q) \lesssim A+\|K\|_{\delta},
    $$
    as desired.
\end{proof}

Proposition \ref{ComparisonProp} immediately gives the following bound for bi-parameter CZOs. 
\begin{cor}\label{ComparisonCorollary}
    Let $T$ be a bi-parameter $\delta \text{CZO}$ such that $\langle K_1(x,y) \psi_z, \psi_w \rangle$ has kernel constants bounded by $V(z,w)$. If for each cube $Q$ and $z,w \in \mathbb{R}_+^{n_2+1}$ there exists a function $\phi_{Q,z,w}$ on $\mathbb{R}^{n_1}$ such that $\|\phi_{Q,z,w}\|_{L^{\infty}(\mathbb{R}^{n_1})} \leq 1$, $\phi_{Q,z,w} 1_Q \equiv 1_Q$, and
    $$
        \sup_Q \text{osc}_{2^{-3-n_1}} (\langle T(\phi_{Q,z,w} \otimes \psi_z), \cdot \otimes \psi_w \rangle; Q) \leq V(z,w),
    $$
    then
    $$
        \|\langle T(1 \otimes \psi_z), \cdot \otimes \psi_w \rangle\|_{\text{BMO}(\R^{n_1})} \lesssim V(z,w).
    $$
\end{cor}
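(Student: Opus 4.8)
The plan is to deduce the corollary by applying Proposition \ref{ComparisonProp} to the one-parameter operator obtained from $T$ by freezing the second parameter. Fix $z,w \in \mathbb{R}_+^{n_2+1}$ and define $T_{z,w}\colon \mathcal{D}(\mathbb{R}^{n_1}) \to \mathcal{D}(\mathbb{R}^{n_1})'$ by $T_{z,w}f := \langle T(f \otimes \psi_z), \cdot \otimes \psi_w \rangle$. I would first verify that $T_{z,w} \in \delta\text{CZO}(\mathbb{R}^{n_1})$: its kernel is $(x,y) \mapsto \langle K_1(x,y)\psi_z, \psi_w \rangle$, whose $\delta$-CZ constant is at most $V(z,w)$ by hypothesis, and it is bounded on $L^2(\mathbb{R}^{n_1})$ with operator norm controlled by $\|T\|_{\delta\text{CZO}(\mathbb{R}^{n_1}\times\mathbb{R}^{n_2})}$. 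This uniform $L^2$-bound is exactly the elementary observation recorded at the start of the proof of Proposition \ref{WC/CMOnecessary}, and only the boundedness, not the compactness, of $T$ is needed for it. One also notes that $T_{z,w}1$, understood in the standard BMO sense for a $\delta$CZO, coincides with $\langle T(1\otimes\psi_z), \cdot\otimes\psi_w\rangle$ since $\psi_z$ has compact support and mean zero in the second variable.

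Next I would match the hypotheses. Unwinding the definition of $T_{z,w}$, the assumption $\sup_Q \text{osc}_{2^{-3-n_1}}(\langle T(\phi_{Q,z,w}\otimes\psi_z), \cdot\otimes\psi_w\rangle; Q) \leq V(z,w)$ says precisely that for every cube $Q$ there is a function $\phi_Q := \phi_{Q,z,w}$ with $\|\phi_Q\|_{L^\infty(\mathbb{R}^{n_1})} \leq 1$, $\phi_Q 1_Q \equiv 1_Q$, and $\sup_Q \text{osc}_{2^{-3-n_1}}(T_{z,w}\phi_Q; Q) \leq A$, with $A := V(z,w)$. Thus Proposition \ref{ComparisonProp} applies to $T_{z,w}$ and yields $\|T_{z,w}1\|_{\text{BMO}(\mathbb{R}^{n_1})} \lesssim A + \|K_{z,w}\|_\delta \lesssim V(z,w)$, where $K_{z,w}(x,y) = \langle K_1(x,y)\psi_z,\psi_w\rangle$ and the implicit constant depends only on $n_1$, hence is independent of $z$ and $w$. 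Since $T_{z,w}1 = \langle T(1\otimes\psi_z), \cdot\otimes\psi_w\rangle$, this is the desired estimate.

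I do not anticipate any real obstacle: the corollary is a direct specialization of Proposition \ref{ComparisonProp}, and the only points deserving a word of justification are (i) that freezing the second parameter produces a one-parameter $\delta$CZO on $\mathbb{R}^{n_1}$ whose $\delta$-CZ constant is controlled by $V(z,w)$, which is immediate from the $\delta\text{CZO}(\mathbb{R}^{n_1})$-valued kernel bound, and (ii) that the constant produced by Proposition \ref{ComparisonProp} is absolute, so that the resulting bound is uniform in $z$ and $w$.
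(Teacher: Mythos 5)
Your proposal is correct and matches the paper's intent: the paper states that Corollary \ref{ComparisonCorollary} follows immediately from Proposition \ref{ComparisonProp}, and your argument simply spells this out by freezing the second variable to form $T_{z,w}f = \langle T(f\otimes\psi_z), \cdot\otimes\psi_w\rangle$, noting its kernel $\langle K_1(x,y)\psi_z,\psi_w\rangle$ has $\delta$-CZ constant at most $V(z,w)$, and applying the proposition with $A = V(z,w)$ so that $A + \|K_{z,w}\|_\delta \lesssim V(z,w)$ with a constant depending only on $n_1$. This is essentially the same (one-line) route the paper takes, with the routine verifications made explicit.
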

\noindent Corollary \ref{ComparisonCorollary} gives part of the mixed weak compactness/CMO property when $V$ satisfies $\lim_{z \rightarrow \infty} \sup_{w \in D(z,M)} V(z,w) = 0$, in particular, when an expression analogous to \eqref{ComparisonDisplay} vanishes. The other mixed weak compactness/CMO estimates follow from similar hypotheses.\looseness=-1

%%%%%%%%%%%%%%%%%%%%%%%%%%%%%%%%%%%%%%%%%%%%%%%%%%%%%%%%%%%%%%%%%%%%%%%%%%%

\section{Endpoint Compactness and necessity of $T1, T^t1 \in \text{CMO}(\mathbb{R}^{n_1}\times \mathbb{R}^{n_2})$}\label{EndpointSection}

We next prove Theorem \ref{EndpointCompactness} and establish the necessity of $T1, T^t1 \in \text{CMO}(\R^{n_1} \times \R^{n_2})$. 
    \begin{lemma}\label{HPInerpolation}
        If an operator $T$ is bounded on $L^2(\R^{n_1} \times \R^{n_2})$ and from $H^p(\R^{n_1} \times \R^{n_2})$ to $L^p(\R^{n_1} \times \R^{n_2})$ for large enough $p< 1$, and $\theta \in (0,1)$ satisfies $\theta p + (1-\theta)2 = 1$, then $T$ is bounded from $H^1(\R^{n_1} \times \R^{n_2})$ to $L^1(\R^{n_1} \times \R^{n_2})$ with 
        $$
            \|T\|_{H^1(\R^{n_1} \times \R^{n_2}) \rightarrow L^1(\R^{n_1} \times \R^{n_2})} \lesssim \|T\|_{H^p(\R^{n_1} \times \R^{n_2}) \rightarrow L^p(\R^{n_1} \times \R^{n_2})}^{\theta p}\|T\|_{L^2(\R^{n_1} \times \R^{n_2}) \rightarrow L^2(\R^{n_1} \times \R^{n_2})}^{(1-\theta)2}.
        $$
    \end{lemma}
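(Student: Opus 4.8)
The plan is to combine the atomic decomposition of the product Hardy space $H^1(\R^{n_1}\times\R^{n_2})$ with the Hadamard three-lines lemma, in the spirit of the interpolation step already carried out in the proof of Proposition~\ref{WC/CMOnecessary}. By the Chang--Fefferman atomic decomposition (cf.\ \cite{CF1980}), and since $H^1 \cap L^2$ is dense in $H^1$ while $T$ is bounded on $L^2$, it suffices to prove the uniform estimate
$$
    \|Ta\|_{L^1(\R^{n_1}\times\R^{n_2})} \lesssim \|T\|_{H^p\to L^p}^{\theta p}\,\|T\|_{L^2\to L^2}^{2(1-\theta)}
$$
over all $H^1(\R^{n_1}\times\R^{n_2})$-atoms $a$ and then to sum this against the coefficients of the decomposition. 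It is essential here to work with genuine product atoms---supported in open sets of finite measure rather than on single rectangles---because of the well-known failure of rectangle-atom testing in the product setting.

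So fix an atom $a$ supported in an open set $\Omega$ with $\|a\|_{L^2(\R^{n_1}\times\R^{n_2})} \le |\Omega|^{-1/2}$, together with its internal rectangle structure and first-order cancellation in each variable. First, $a \in L^2$ forces $Ta \in L^2$, and $a \in H^1 \subseteq H^p$ forces $Ta \in L^p$ by hypothesis; hence $Ta$ is an honest function lying in $L^q$ for all $q \in [p,2]$ by log-convexity of $L^q$-norms. Second---and this is where ``large enough $p<1$'' is used, so that the moment conditions defining product $H^p$-atoms reduce to mean zero in each variable---the function $|\Omega|^{1-1/p}a$ is, up to a dimensional constant, an $H^p(\R^{n_1}\times\R^{n_2})$-atom: it and its rescaled rectangular pieces have the required support, cancellation, and $L^2$-size $\le |\Omega|^{1/2-1/p}$. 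Consequently $\|a\|_{H^p(\R^{n_1}\times\R^{n_2})} \lesssim |\Omega|^{1/p-1}$, while trivially $\|a\|_{L^2(\R^{n_1}\times\R^{n_2})} \le |\Omega|^{-1/2}$.

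Now let $F(\zeta) := \int_{\R^{n_1}\times\R^{n_2}} |Ta(x)|^{\zeta}\,dx$, which is bounded and holomorphic on the strip $p \le \Re\zeta \le 2$ by the $L^q$-memberships above (holomorphic on the interior with continuous boundary values, exactly as in the proof of Proposition~\ref{WC/CMOnecessary}). On $\Re\zeta = p$ we have $|F(\zeta)| \le \|Ta\|_{L^p}^p \le \|T\|_{H^p\to L^p}^p\|a\|_{H^p}^p \lesssim \|T\|_{H^p\to L^p}^p\,|\Omega|^{1-p}$, and on $\Re\zeta = 2$ we have $|F(\zeta)| \le \|Ta\|_{L^2}^2 \le \|T\|_{L^2\to L^2}^2\,|\Omega|^{-1}$. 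The relation $\theta p + (1-\theta)2 = 1$ is equivalent to $\theta = (2-p)^{-1}$, which places the point $\zeta = 1$ at parameter $\theta$ between these two lines, so the Hadamard three-lines lemma gives
$$
    \|Ta\|_{L^1(\R^{n_1}\times\R^{n_2})} = F(1) \lesssim \big(\|T\|_{H^p\to L^p}^p|\Omega|^{1-p}\big)^{\theta}\big(\|T\|_{L^2\to L^2}^2|\Omega|^{-1}\big)^{1-\theta} = \|T\|_{H^p\to L^p}^{\theta p}\,\|T\|_{L^2\to L^2}^{2(1-\theta)},
$$
since $(1-p)\theta - (1-\theta) = \theta(2-p) - 1 = 0$. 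This is the desired atomic estimate, and summing over the atomic decomposition of a general $f \in H^1 \cap L^2$ completes the proof.

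I expect the main obstacle to be bookkeeping rather than conceptual: correctly invoking the product atomic decomposition (atoms on open sets, with the Journé-type subtleties that make it non-trivial) and verifying that an $H^1$-atom is a fixed dimensional multiple of an $H^p$-atom once $p$ is close enough to $1$, where one must carefully match both the $L^2$-normalizations $|\Omega|^{-1/2}$ versus $|\Omega|^{1/2-1/p}$ and the moment conditions. One could instead bypass atoms altogether via complex interpolation of operators between the couples $(H^p,H^2)$ and $(L^p,L^2)$, using $[H^p,H^2]_{\theta'} = H^1$ and $[L^p,L^2]_{\theta'}=L^1$ for the matching $\theta'$; but the atomic argument above is self-contained and mirrors the one already used in the paper.
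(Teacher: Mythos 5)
Your proposal is correct and follows essentially the same route as the paper: the paper also reduces to Chang--Fefferman/Han-type atoms on open sets (citing the atomic characterization of the product Hardy spaces), observes that $|\Omega|^{1-1/p}a$ is an $H^p$-atom for $p$ large enough, and concludes via the same holomorphic function $\zeta\mapsto\int|Ta|^{\zeta}$ and three-lines argument borrowed from Proposition \ref{WC/CMOnecessary}. Your extra care about uniform atomic estimates versus genuine $H^1$-boundedness is exactly the point the paper delegates to the cited atomic decomposition theorem.
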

    \begin{proof}
        We use the formulation of the atomic decomposition characterization of $H^p(\R^{n_1} \times \R^{n_2})$ in \cite{HAN20102834}*{Theorem B}. Let $a$ be an atom for $H^1$ on an open set $\Omega \subseteq \R^{n_1} \times \R^{n_2}$. Since $p$ is sufficiently large, $|\Omega|^{1-\frac{1}{p}} a$ is an $H^p$ atom and, by definition, $\|a\|_{L^2(\mathbb{R}^{n_1}\times\mathbb{R}^{n_2})} \leq |\Omega|^{-1/2}$. The result follows by considering the holomorphic function
        $$
            \zeta \mapsto \int_{\R^{n_1} \times \R^{n_2}} |Ta(x)|^\zeta\,dx
        $$
        and repeating the argument from Proposition \ref{WC/CMOnecessary}.
    \end{proof}

We denote the standard dyadic grid on $\R^n$ by $\D$ and, for $N \in \mathbb{N}$, define the finite collection 
$\mathcal{D}_N := \big\{Q \in \mathcal{D}\colon Q \subseteq [-2^N,2^N)^n \text{ and } |Q| \ge 2^{-N}\big\}$. Given two $C^1$ compactly supported orthonormal wavelet bases for $L^2(\R^{n_1})$ and $L^2(\R^{n_2})$, $\{ \psi_Q\}_{Q \in \D}$ and $\{ \psi_S \}_{S \in \D}$, we define the finite rank projection operators $P_N^1$ and $P_N^2$ by
$$
    P_N^1 f = \sum_{Q \in \D_N} \langle f, \psi_Q \rangle \psi_Q \quad\text{and}\quad
    P_N^2 f = \sum_{S \in \D_N} \langle f, \psi_S \rangle \psi_S,
$$
where we have suppressed a finite sum over the auxiliary variables $\gamma \in \{ 1, \ldots, 2^{n_1}-1\}$ and $\delta \in \{1, \ldots, 2^{n_2} - 1 \}$ in the indexing of the wavelet bases. Define $P_N := P_N^1 \otimes P_N^2$.
    \begin{lemma}\label{Unconditionality}
        Each $P_N$ is a fully cancellative bi-parameter CZO with norm independent of $N$.\looseness=-1
    \end{lemma}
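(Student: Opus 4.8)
The plan is to verify directly that $P_N$ satisfies the definition of a fully cancellative bi-parameter $\delta$-CZO with constants independent of $N$, i.e., that it is a bi-parameter $\delta$-CZO satisfying the top-level and partial cancellation conditions
$$
\langle P_N(1\otimes\cdot),\cdot\otimes\cdot\rangle = \langle P_N(\cdot\otimes 1),\cdot\otimes\cdot\rangle = \langle P_N(\cdot\otimes\cdot),1\otimes\cdot\rangle = \langle P_N(\cdot\otimes\cdot),\cdot\otimes 1\rangle = 0,
$$
together with the displayed top-level identities $\langle P_N(1\otimes 1),\cdot\otimes\cdot\rangle = \cdots = 0$. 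First I would record that $P_N = P_N^1\otimes P_N^2$ is an orthogonal projection onto a finite-dimensional subspace of $L^2(\R^{n_1}\times\R^{n_2})$, so $\|P_N\|_{L^2\to L^2} = 1$; the same is true of $(P_N)_t = (P_N^1)\otimes(P_N^2)$ (each one-parameter factor is self-adjoint), so the $L^2$ and partial-transpose $L^2$ parts of the $\delta$CZO norm are exactly $1$, uniformly in $N$.

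Next I would exhibit the kernel. Since $\psi_Q,\psi_S$ are $C^1$ with compact support and each $\psi_Q$, $\psi_S$ has mean zero, one writes $P_N^1$ as an integral operator with kernel $K_N^1(x,y) = \sum_{Q\in\D_N}\psi_Q(x)\psi_Q(y)$, and similarly $K_N^2$; the full kernel of $P_N$ is the tensor product. To see that $K_N^1$ is a $\delta$-CZ kernel (indeed a $1$-CZ kernel since the wavelets are $C^1$) with constant bounded independently of $N$, I would use the standard Calderón–Zygmund/almost-orthogonality estimate: for a $C^1$ compactly supported mean-zero wavelet basis one has $|\psi_Q(x)\psi_Q(y)|\lesssim \ell(Q)^{-n}$ on $\mathrm{supp}\,\psi_Q\times\mathrm{supp}\,\psi_Q$, the supports of $\psi_Q$ and $\psi_Q$ both sit in a bounded dilate of $Q$, and summing the geometric-type series $\sum_{Q:\,\ell(Q)\lesssim|x-y|}\ell(Q)^{-n}\lesssim|x-y|^{-n}$ over the finite collection $\D_N$ gives \eqref{kernelsize}; the smoothness bound \eqref{kernelsmoothness} comes from the $C^1$ estimate $|\nabla\psi_Q|\lesssim\ell(Q)^{-n-1}$ and a similar (absolutely convergent) sum. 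The bounds are uniform in $N$ because they are dominated by the corresponding bounds for the full (infinite) wavelet representation of the identity. This also furnishes the $\delta$CZO$(\R^{n_j})$-valued kernels $K_i$ required in the bi-parameter definition: $K_1(x,y) = \sum_{Q\in\D_N}\psi_Q(x)\psi_Q(y)\,P_N^2$ and symmetrically for $K_2$, each a finite sum of scalars times the fixed projection $P_N^2$ (resp. $P_N^1$), which lies in $\delta$CZO with uniformly controlled norm by the one-parameter computation just described. Hence $\|P_N\|_{\delta\text{CZO}(\R^{n_1}\times\R^{n_2})}\lesssim 1$ uniformly.

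Finally, the cancellation conditions are immediate from the mean-zero property of the wavelets. For any $g_1\in\D(\R^{n_1})$, $g_2\in\D(\R^{n_2})$ we have $\langle P_N(1\otimes g_2),g_1\otimes g_2'\rangle = \sum_{Q,S}\langle 1,\psi_Q\rangle\langle g_2,\psi_S\rangle\langle\psi_Q,g_1\rangle\langle\psi_S,g_2'\rangle$, and $\langle 1,\psi_Q\rangle = \int\psi_Q = 0$ forces the whole expression to vanish; the same cancellation in the first variable kills $\langle P_N(\cdot\otimes\cdot),1\otimes\cdot\rangle$, and the analogous identities in the second variable (using $\int\psi_S = 0$) handle $\langle P_N(\cdot\otimes 1),\cdot\otimes\cdot\rangle$ and $\langle P_N(\cdot\otimes\cdot),\cdot\otimes 1\rangle$. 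The top-level identities $\langle P_N(1\otimes 1),\cdot\otimes\cdot\rangle = 0$, etc., follow a fortiori. I do not anticipate a genuine obstacle here; the only point requiring care is making the kernel estimates for $K_N^1$ genuinely uniform in $N$, which is handled by comparison with the (convergent) full wavelet series and the standard off-diagonal decay of $C^1$ compactly supported wavelets.
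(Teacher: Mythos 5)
Your proposal is correct and follows essentially the same route as the paper: the paper simply cites Meyer (Chapter 6) for the fact that each $P_N^i$ is a one-parameter CZO uniformly in $N$ (obtained by summing the rank-one kernels and using the support properties of the wavelets), then takes the tensor product and invokes the mean-zero property of $\psi_Q$, $\psi_S$ for full cancellativity — exactly the argument you spell out, including the vector-valued kernels $K_1(x,y)=K_N^1(x,y)\,P_N^2$. One small slip: the cubes contributing to the kernel sum at $(x,y)$ are those with $\ell(Q)\gtrsim|x-y|$ (not $\ell(Q)\lesssim|x-y|$), and the geometric series is then summed over these large scales to get $|x-y|^{-n}$; this is a typo rather than a gap, since you correctly invoke the support containment that forces it.
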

    \begin{proof}
        The fact that each $P_N^i$ is a one-parameter CZO with norm independent of $N$ is noted in \cite{M1992}*{Chapter 6} and follows by summing the kernels of the rank 1 terms and using support properties of the wavelets. Therefore, their tensor product is a bi-parameter CZO with norm independent of $N$. The fact that $P_N$ is fully cancellative follows directly from the mean-zero cancellation of each $\psi_Q$ and $\psi_S$.
    \end{proof}
    \begin{proof}[Proof of Theorem \ref{EndpointCompactness}]
        By compactness of $T$ on $L^2(\mathbb{R}^{n_1}\times\mathbb{R}^{n_2})$, we have that 
        $$
            \lim_{N\rightarrow\infty}\|T P_N^\perp \|_{L^2(\R^{n_1}\times\R^{n_2}) \rightarrow L^2(\R^{n_1}\times\R^{n_2})} = 0.
        $$
        By Lemma \ref{Unconditionality} and \cite{HAN20102834}*{Theorem 1}, $P_N$ and $P_N^\perp$ are uniformly bounded on $H^p(\R^{n_1}\times\R^{n_2})$ for $p \in \big(\max \{ \frac{n_1}{n_1 + 1} , \frac{n_2}{n_2 + 1} \}, 1\big]$. We also have that $T$ is bounded from $H^p(\R^{n_1}\times\R^{n_2})$ to $L^p(\R^{n_1}\times\R^{n_2})$ for sufficiently large $p < 1$ by \cite{MR0828217}, and so \looseness=-1
        $$
            \sup_{N\in\mathbb{N}}\|T P_N^\perp\|_{H^p(\R^{n_1} \times \R^{n_2}) \rightarrow L^p(\R^{n_1} \times \R^{n_2})} <\infty
        $$ 
        for such $p$. By Lemma \ref{HPInerpolation} we have $$
            \lim_{N\rightarrow\infty}\|T P_N^\perp \|_{H^1(\R^{n_1} \times \R^{n_2}) \rightarrow L^1(\R^{n_1} \times \R^{n_2})} = 0,
        $$
        which implies that $T P_N \rightarrow T$ uniformly as operators from $H^1(\R^{n_1} \times \R^{n_2})$ to $L^1(\R^{n_1} \times \R^{n_2})$; thus $T$ is compact from $H^1(\R^{n_1} \times \R^{n_2})$ to $L^1(\R^{n_1} \times \R^{n_2})$. Replacing $T$ with $T^*$ and taking the adjoint of $T^* P_N$, we get that $P_N T \rightarrow T$ in operator norm from $L^\infty(\R^{n_1} \times \R^{n_2})$ to $\text{BMO}(\R^{n_1} \times \R^{n_2})$, and since each $P_N$ is finite rank with range in $C_0(\R^{n_1} \times \R^{n_2})$, this gives that $T$ is compact from $L^\infty(\R^{n_1} \times \R^{n_2})$ to $\text{CMO}(\R^{n_1} \times \R^{n_2})$.
    \end{proof}

    \begin{cor}\label{nec3}
        If $T$ is an $L^2(\mathbb{R}^{n_1}\times\mathbb{R}^{n_2})$-compact bi-parameter CZO, then $T1, T^t1 \in$ \newline$\text{CMO}(\mathbb{R}^{n_1}\times\mathbb{R}^{n_2})$.
    \end{cor}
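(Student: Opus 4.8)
The plan is to read this off directly from Theorem \ref{EndpointCompactness}, applied to $T$ and to its full transpose $T^t$. Recall first that for an $L^2(\R^{n_1}\times\R^{n_2})$-bounded bi-parameter CZO the symbols $T1$ and $T^t1$ are well-defined elements of $\text{BMO}(\R^{n_1}\times\R^{n_2})$ modulo constants; equivalently, $T$ and $T^t$ extend boundedly from $L^\infty(\R^{n_1}\times\R^{n_2})$ to $\text{BMO}(\R^{n_1}\times\R^{n_2})$, and the value of this extension at the function $1$ coincides with the distributional symbol $T1$. So the assertion to be proved is precisely that these symbols belong to the smaller space $\text{CMO}(\R^{n_1}\times\R^{n_2})$.

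Now, since $T$ is compact on $L^2(\R^{n_1}\times\R^{n_2})$, Theorem \ref{EndpointCompactness} gives that $T$ is compact, hence bounded, from $L^\infty(\R^{n_1}\times\R^{n_2})$ to $\text{CMO}(\R^{n_1}\times\R^{n_2})$; evaluating at $1 \in L^\infty(\R^{n_1}\times\R^{n_2})$ yields $T1 \in \text{CMO}(\R^{n_1}\times\R^{n_2})$. For the transpose, one first notes that $T^t$ is again a bi-parameter CZO: its kernels are obtained from those of $T$ by interchanging the roles of $x$ and $y$ and passing to one-parameter transposes in the inner variable, so the size and smoothness bounds \eqref{kernelsize}--\eqref{kernelsmoothness} are preserved and $\|T^t\|_{\delta\text{CZO}(\R^{n_1}\times\R^{n_2})} = \|T\|_{\delta\text{CZO}(\R^{n_1}\times\R^{n_2})}$; moreover $T^t$ is compact on $L^2(\R^{n_1}\times\R^{n_2})$ since the transpose of a compact operator is compact. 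Applying Theorem \ref{EndpointCompactness} to $T^t$ and evaluating at $1$ gives $T^t1 \in \text{CMO}(\R^{n_1}\times\R^{n_2})$.

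There is no substantive obstacle here: once Theorem \ref{EndpointCompactness} is in hand the corollary is a one-line specialization, and the only point deserving a remark is the standard Calder\'on--Zygmund fact that the distributional symbol $T1$ agrees with $T$ applied to the constant function $1$ through the $L^\infty \to \text{BMO}$ extension, so that the endpoint compactness statement indeed forces $T1$ into $\text{CMO}$.
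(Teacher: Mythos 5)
Your proposal is correct and follows the same route the paper intends: the corollary is stated as an immediate consequence of Theorem \ref{EndpointCompactness}, applied to $T$ and to its transpose $T^t$ (which is again an $L^2$-compact bi-parameter CZO), and evaluated at the constant function $1$. Your added remark identifying the distributional symbol $T1$ with the image of $1$ under the $L^\infty \to \text{BMO}$ extension is the right point to flag and is consistent with how the paper's endpoint proof (via $P_N T \to T$ with $P_N$ having range in $C_0$) places $T1$ in the $\text{BMO}$-closure of $C_0$, i.e.\ in $\text{CMO}(\mathbb{R}^{n_1}\times\mathbb{R}^{n_2})$.
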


%%%%%%%%%%%%%%%%%%%%%%%%%%%%%%%%%%%%%%%%%%%%%%%%%%%%%%%%%%%%%%%%%%%%%%%%%%%

\section{Compactness of abstract partially localized operators}\label{AbstractSection}

    In this section, we introduce the notion of partially localized operators and characterize their compactness with the weak compactness property, which generalizes the notion from Definition \ref{BiparameterWeakCompactnessDefinition}. We develop this theory in an abstract setting of tensor products of Hilbert spaces, and later apply it in special cases to deduce the compactness of the bi-parameter partial paraproducts and fully cancellative bi-parameter CZOs satisfying the product weak compactness property. 
    
    Let $H_1$ and $H_2$ be separable Hilbert spaces and $G$ be a separable locally compact group with left Haar measure $dz$. We construct an appropriate exhaustion $\{E_N\}_{N\in\mathbb{N}}$ of $G$. Let $U$ be an open, precompact neighborhood of the identity, let $\{ p_N \}_{N \in \mathbb{N}}$ be a dense subset of $G$ with $p_1 = e$, and define $F_N := \bigcup_{n =1}^N \left( p_n U \cup U^{-1} p_n^{-1} \right)$. Note that $F_{N+1} \supseteq F_N \supseteq \{p_1,\ldots, p_N \}$ and $F_N^{-1}  =F_N$. Now define $E_N := F_N^N = F_N * F_N * \cdots * F_N$ ($N$ times). Then each $E_N$ is open and precompact. Also note that, since $F_N \subseteq F_{N+1}$ and $F_N \ni e$, we have $F_N \subseteq E_N \subseteq E_{N+1}$. Also, $E_N * E_M \subseteq E_{\max \{N,M\}}^2 \subseteq E_{2 \max \{N,M\}}$. Since $E_N^{-1} = E_N$, it follows that $E := \bigcup_{N \in \mathbb{N}} E_N$ is an open subgroup of $G$. Since $E^c$ is the union of its cosets, $E$ is also closed. Since $E$ contains a dense subset of $G$ by construction, it follows that $E=G$. The important properties of $\{ E_N \}_{N \in \mathbb{N}}$ are that $E_N^2 \subseteq E_{2N}$, $E_N \subseteq E_{N+1}$, and that each $E_N$ is open and precompact.
    
    Suppose that $\{\psi_z\}_{z \in G} \subseteq H_2$ is a bounded collection such that $z \mapsto \psi_z$ is norm continuous and that forms a continuous Parseval frame for $H_2$ in the sense that \looseness=-1
    \[
        \langle f,g\rangle_{H_2} = \int_G \langle f,\psi_z \rangle_{H_2}\langle \psi_z, g\rangle_{H_2} \, dz
    \]
    for all $f,g \in H_2$. 
    
    We denote by $\HS = H_1 \Ptens H_2$ the completion of the algebraic tensor product $H_1 \otimes H_2$ with respect to the inner product defined by the linear extension of 
    $$
        \langle a_1\otimes b_1 , a_2 \otimes b_2 \rangle_{H_1 \otimes H_2} := \langle a_1,a_2\rangle_{H_1}\langle b_1,b_2\rangle_{H_2}
    $$
    for $a_1,a_2 \in H_1$ and $b_1,b_2 \in H_2$. For $f = \sum_{n=1}^{N} c_n(a_n\otimes b_n) \in H_1 \otimes H_2$ and $g \in H_2$, we define $\langle f,g\rangle_{H_2} \in H_1$ by
    $$
        \langle f, g \rangle_{H_2} := \sum_{n=1}^N c_na_n \langle b_n, g \rangle_{H_2},
    $$
    and use the same notation to denote its natural well-defined extension to all $f \in \HS$. 
    
    We define the wavelet transform $\W \colon \HS \rightarrow L^2(G;H_1)$ by
    $$
        \W f(z) := \langle f, \psi_z \rangle_{H_2}.
    $$
    Note that $\W$ is an isometry since the scalar-valued wavelet transform $H_2 \ni f \mapsto \langle f,\psi_z\rangle_{H_2}$ is assumed to be an isometry on $L^2(G)$ -- this fact extends to the above Hilbert space-valued version by expanding into an orthonormal basis. Since $\W$ is an isometry, we have that 
    \[
        \langle f, g\rangle_{\HS}= \int_G \langle \W f(z), \W g(z) \rangle_{H_1} \, dz  = \int_G \langle \W f(z) \otimes \psi_z , g\rangle_{\HS} \, dz 
    \]
    for all $f, g \in \HS$. 
    
    Given a linear operator $T\colon \HS \rightarrow \HS$ and $z,w \in G$, we define the restricted operator $T_{(z,w)}\colon H_1 \rightarrow H_1$ for $h \in H_1$ by 
    $$
        T_{(z,w)}h := \langle T(h \otimes \psi_z), \psi_w \rangle_{H_2}.
    $$

    \begin{defin}\label{PartialLocalizationDefinition}
        A bounded linear operator $T \colon \HS \rightarrow \HS$ is partially localized if 
        \begin{align*}
            \sup_{z \in G} \omega(z)^{-1} &\int_G \|T_{(z,w)} \|_{H_1 \rightarrow H_1} \omega(w) \, dw < \infty,\\
            \sup_{w \in G} \omega(w)^{-1} &\int_G \|T_{(z,w)} \|_{H_1 \rightarrow H_1} \omega(z) \, dz < \infty,\\
            \lim_{N \rightarrow \infty} \sup_{z \in G} \omega(z)^{-1} &\int_{G \setminus zE_N} \|T_{(z,w)} \|_{H_1 \rightarrow H_1} \omega(w) \, dw=0, \quad\text{and}\\
            \lim_{N \rightarrow \infty} \sup_{w \in G} \omega(w)^{-1} &\int_{G \setminus wE_N} \|T_{(z,w)} \|_{H_1 \rightarrow H_1} \omega(z) \, dz=0
        \end{align*}
        for some continuous function $\omega \colon G \rightarrow \R_+$ satisfying $\omega(zw) \leq \omega(z) \omega(w)$ for all $z,w \in G$.
    \end{defin}
    \noindent Note that since each $E_N$ is open and precompact, the condition 
    $$
        \lim_{N \rightarrow \infty} \sup_{z \in G} \omega(z)^{-1} \int_{G \setminus zE_N} \|T_{(z,w)} \|_{H_1 \rightarrow H_1} \omega(w) \, dw=0
    $$ 
    is equivalent to the assertion that for every $\epsilon > 0$ there is a compact set $K \subseteq G$ with $$\sup_{z \in G} \omega(z)^{-1} \int_{G \setminus zK} \|T_{(z,w)} \|_{H_1 \rightarrow H_1} \omega(w) \, dw < \epsilon,$$ and similarly for the other vanishing condition, so that the definition of partial localization is independent of the choice of exhaustion $\{ E_N\}_{N \in \mathbb{N}}$. 
    
    We will use the following sufficient condition for partial localization.
    \begin{lemma}\label{MatrixBound>Localized}
        If $T \colon \HS \rightarrow \HS$ is a bounded linear operator such that 
        $$
            \|T_{(z,w)}\|_{H_1 \rightarrow H_1} \leq L(z^{-1} w)
        $$ 
        for all $z, w \in G$, where $L(\cdot^{-1}) = L(\cdot)$ and $L \in L^1(G, \omega(z) dz)$ for some continuous function $\omega \colon G \rightarrow \R_+$ satisfying $\omega(zw) \leq \omega(z)\omega(w)$ for all $z,w \in G$, then $T$ is partially localized.
    \end{lemma}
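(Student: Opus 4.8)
The plan is to verify directly the four estimates in Definition \ref{PartialLocalizationDefinition}, for the very same weight $\omega$ provided by the hypothesis, using only the pointwise bound $\|T_{(z,w)}\|_{H_1 \rightarrow H_1} \le L(z^{-1}w)$, the left-invariance of the Haar measure $dz$, the submultiplicativity $\omega(zw) \le \omega(z)\omega(w)$, and, for the two vanishing conditions, dominated convergence along the exhaustion $\{E_N\}_{N \in \N}$ (recall $E_N \subseteq E_{N+1}$ and $\bigcup_N E_N = G$).

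First I would treat the conditions in which $z$ is fixed and $w$ is the variable of integration. Substituting $u = z^{-1}w$, which preserves $dw$ by left-invariance, and using $\omega(w) = \omega(zu) \le \omega(z)\omega(u)$ gives
\[
\omega(z)^{-1} \int_G \|T_{(z,w)}\|_{H_1 \rightarrow H_1}\,\omega(w)\,dw \le \omega(z)^{-1}\int_G L(u)\,\omega(zu)\,du \le \int_G L(u)\,\omega(u)\,du = \|L\|_{L^1(G,\omega(z)\,dz)},
\]
which is finite and independent of $z$, giving the first finiteness condition. The same substitution sends $G \setminus zE_N$ to $G \setminus E_N$, so the left-hand side of the third condition is bounded by $\int_{G \setminus E_N} L(u)\omega(u)\,du$ uniformly in $z$; since $L\omega \in L^1(G)$ and $\mathbf{1}_{G \setminus E_N} \to 0$ pointwise, dominated convergence shows this tends to $0$ as $N \to \infty$.

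For the second and fourth conditions, where $z$ is integrated and $w$ is fixed, a direct substitution in $L(z^{-1}w)$ is obstructed by the fact that $dz$ is only left-invariant. Here I would first invoke the symmetry hypothesis to rewrite $L(z^{-1}w) = L\big((z^{-1}w)^{-1}\big) = L(w^{-1}z)$, and then substitute $v = w^{-1}z$ (equivalently $z = wv$), which again preserves $dz$. With $\omega(z) = \omega(wv) \le \omega(w)\omega(v)$ this yields
\[
\omega(w)^{-1}\int_G \|T_{(z,w)}\|_{H_1 \rightarrow H_1}\,\omega(z)\,dz \le \int_G L(v)\,\omega(v)\,dv = \|L\|_{L^1(G,\omega(z)\,dz)},
\]
and, restricting to $z \in G \setminus wE_N$, i.e. $v \in G \setminus E_N$, the same dominated-convergence argument gives $\int_{G \setminus E_N} L(v)\omega(v)\,dv \to 0$. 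This establishes all four conditions, so $T$ is partially localized.

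The computation is entirely routine; the only point requiring care is the asymmetry between the $w$-integral and the $z$-integral caused by the one-sided invariance of $dz$, and the symmetry assumption $L(\cdot^{-1}) = L(\cdot)$ is precisely what repairs it. I do not anticipate any genuine obstacle.
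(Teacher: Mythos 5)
Your proof is correct and follows essentially the same route as the paper's: a left-translation change of variables together with the submultiplicativity of $\omega$ handles the $w$-integrals, and the symmetry $L(\cdot^{-1})=L(\cdot)$ combined with the same translation argument handles the $z$-integrals, with dominated convergence along $\{E_N\}$ giving the two vanishing conditions. You merely spell out the second half, which the paper dispatches with ``the same argument and the assumption $L(\cdot^{-1})=L(\cdot)$.''
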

    \begin{proof}
        Let $S \subseteq G$ be measurable. Then
        $$
            \int_{S} \|T_{(z,w)}\|_{H_1 \rightarrow H_1} \omega(w) \, dw \leq \int_S L(z^{-1} w) \omega(w) \leq  \omega(z) \int_{z^{-1} S} L(w) \, \omega(w) \, dw.
        $$
        If $S = G$, this gives
        \[
            \sup_{z \in G} \omega(z)^{-1} \int_G \|T_{(z,w)} \|_{H_1 \rightarrow H_1} \omega(w) \, dw \leq \int_G L(w) \, \omega(w) \, dw < \infty.
        \]
        If $S = G \setminus zE_N$, then by the dominated convergence theorem 
        $$
            \lim_{N \rightarrow \infty} \sup_{z \in G} \omega(z)^{-1} \int_{G \setminus zE_N} \|T_{(z,w)} \|_{H_1 \rightarrow H_1} \omega(w) \, dw \leq \lim_{N \rightarrow \infty} \int_{G \setminus E_N} L(w) \omega(w) \, dw = 0.
        $$
        The other estimates follow from the same argument and the assumption $L(\cdot^{-1}) = L(\cdot)$.
    \end{proof}

    We next define weak compactness in this setting.
    \begin{defin}\label{WeakCptDefinition}
        A linear operator $T \colon \HS \rightarrow \HS$ is weakly compact if $T_{(z,w)}$ is compact on $H_1$ for all $z,w \in G$ and 
        \[
            \lim_{z \rightarrow \infty} \sup_{w \in z E_M} \|T_{(z,w)}\|_{H_1 \rightarrow H_1} = 0
        \]
        for every $M > 0$.
    \end{defin}
    \noindent Similarly to Definition \ref{PartialLocalizationDefinition}, we can equivalently replace $E_M$ with an arbitrary compact set $K \subseteq G$, so the definition does not depend on the exhaustion $\{ E_N \}_{N \in \mathbb{N}}$.
   
    Weak compactness is necessary for the compactness of $T$, so long as $\psi_z \rightarrow 0$ weakly. \looseness=-1
    \begin{prop}
        If a linear operator $T \colon \HS \rightarrow \HS$ is compact and $\psi_z \rightarrow 0$ weakly in $H_2$ as $z \rightarrow \infty$, then $T$ is weakly compact. 
    \end{prop}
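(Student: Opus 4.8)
The plan is to verify the two conditions in Definition \ref{WeakCptDefinition} directly. The key observation is the factorization $T_{(z,w)} = R_w \circ T \circ L_z$, where $L_z \colon H_1 \to \HS$ is the bounded map $L_z h := h \otimes \psi_z$ and $R_w \colon \HS \to H_1$ is the bounded map $R_w F := \langle F, \psi_w \rangle_{H_2}$. One has $\|L_z\|_{H_1 \to \HS} = \|\psi_z\|_{H_2}$, and $\|R_w\|_{\HS \to H_1} \leq \|\psi_w\|_{H_2}$, since $\langle R_w F, a \rangle_{H_1} = \langle F, a \otimes \psi_w \rangle_{\HS}$ for all $a \in H_1$. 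Because $T$ is compact and the compact operators form a two-sided ideal, $T_{(z,w)}$ is compact for every $z, w \in G$; this gives the first condition.

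For the vanishing condition, I would argue by contradiction. If it fails for some $M > 0$, there are $\epsilon > 0$, a sequence $z_n \to \infty$ in $G$, points $w_n \in z_n E_M$, and unit vectors $h_n \in H_1$ with $\|T_{(z_n, w_n)} h_n\|_{H_1} \geq \epsilon/2$. Put $f_n := h_n \otimes \psi_{z_n} \in \HS$, so that $\sup_n \|f_n\|_{\HS} = \sup_n \|\psi_{z_n}\|_{H_2} < \infty$ since $\{\psi_z\}_{z \in G}$ is a bounded collection. I claim $f_n \to 0$ weakly in $\HS$: for an elementary tensor $g = a \otimes b$ we have $\langle f_n, g \rangle_{\HS} = \langle h_n, a \rangle_{H_1} \langle \psi_{z_n}, b \rangle_{H_2} \to 0$, because $\psi_{z_n} \to 0$ weakly in $H_2$ and $|\langle h_n, a \rangle_{H_1}| \leq \|a\|_{H_1}$; by linearity this persists for $g$ in the algebraic tensor product $H_1 \otimes H_2$, and since that subspace is dense in $\HS$ and $\sup_n \|f_n\|_{\HS} < \infty$, a standard $3\epsilon$-approximation gives $\langle f_n, g \rangle_{\HS} \to 0$ for every $g \in \HS$.

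Compactness of $T$ then forces $\|T f_n\|_{\HS} \to 0$. Setting $C := \sup_{w \in G} \|\psi_w\|_{H_2} < \infty$, the factorization yields
$$
    \frac{\epsilon}{2} \leq \|T_{(z_n, w_n)} h_n\|_{H_1} = \|R_{w_n}(T f_n)\|_{H_1} \leq C \|T f_n\|_{\HS} \to 0,
$$
which is absurd; hence the vanishing condition holds (in fact with $\sup_{w \in G}$ in place of $\sup_{w \in z E_M}$), and $T$ is weakly compact. The only step that is not entirely routine is upgrading weak convergence tested on elementary tensors to weak convergence in the completed space $\HS$, which uses density of $H_1 \otimes H_2$ together with the uniform bound on $\|f_n\|_{\HS}$; all other steps are bookkeeping.
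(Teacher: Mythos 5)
Your proof is correct and follows essentially the same route as the paper's: a contradiction argument in which unit vectors $h_n$ nearly attaining $\|T_{(z_n,w_n)}\|$ are tensored with $\psi_{z_n}$ to produce a weakly null sequence in $\HS$, whose image under the compact operator $T$ must converge to zero in norm, contradicting the lower bound. Your added details (the factorization $T_{(z,w)}=R_w T L_z$ for compactness of the restricted operators, and the density argument upgrading weak convergence from elementary tensors to $\HS$) simply spell out steps the paper treats as clear, and your observation that the argument gives vanishing uniformly over $w\in G$ is a harmless strengthening.
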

    \begin{proof}
        The compactness of each $T_{(z,w)}$ is clear. For the other condition, suppose $f_n \rightarrow 0$ weakly in $H_2$ and $g_n$ is bounded in $H_2$. If $\limsup_{n \rightarrow \infty} \| \langle T(\cdot \otimes f_n), \cdot \otimes g_n \rangle \|_{H_1 \rightarrow H_1} > 0$, then there exist $a_n, b_n \in H_1$ such that $\|a_n\|_{H_1} = \|b_n\|_{H_1} = 1$ and
        $$
            \limsup_{n \rightarrow \infty} \|T(a_n \otimes f_n) \|_\HS \gtrsim  \limsup_{n \rightarrow \infty} | \langle T(a_n \otimes f_n), b_n \otimes g_n\rangle | > 0.
        $$
        But $a_n \otimes f_n \rightarrow 0$ weakly in $\HS$, so by compactness of $T$ this is a contradiction. To conclude, we use that $\psi_z \rightarrow 0$ weakly in $H_2$ and make the obvious modifications with the indexing set $G$ in place of sequences.
    \end{proof}

    We establish the following Riesz-Kolmogorov type criterion for the precompact subsets of $\HS$, which will be key to the proof of Theorem \ref{Abstractbi-parameterCompactnessTheorem}.
    \begin{prop}\label{RieszKolmogorov}
        A bounded set $\mathcal{K} \subseteq \HS$ is precompact if and only if $\{\W f(z) \colon f \in \mathcal{K}\} \subseteq H_1$ is precompact for each $z \in G$ and 
        \[
            \lim_{N \rightarrow \infty} \sup_{f \in \mathcal{K}} \int_{G \setminus E_N} \| \W f (z) \|_{H_1}^2 \, dz = 0.
        \]
    \end{prop}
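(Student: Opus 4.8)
The plan is to prove a Hilbert-space-valued Riesz–Kolmogorov compactness criterion by transporting the problem to $L^2(G; H_1)$ via the isometry $\W$. Since $\W \colon \HS \to L^2(G; H_1)$ is an isometry onto a closed subspace, $\mathcal{K} \subseteq \HS$ is precompact if and only if $\W(\mathcal{K}) \subseteq L^2(G; H_1)$ is precompact. So it suffices to characterize precompact subsets of $L^2(G; H_1)$ that happen to lie in the range of $\W$. The stated conditions are exactly: pointwise precompactness of the evaluations $\{\W f(z) : f \in \mathcal{K}\}$ in $H_1$ (this is the "tightness in the range" analogue of equismall tails in the value space), and uniform smallness of the tails $\int_{G \setminus E_N} \|\W f(z)\|_{H_1}^2\,dz$ (tightness in the base $G$). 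What is conspicuously \emph{absent} from the hypotheses is any equicontinuity condition on $z \mapsto \W f(z)$; the point of the proof is that for functions in the range of $\W$, equicontinuity is automatic.

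\medskip

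First I would establish necessity, which is the easy direction: if $\mathcal{K}$ is precompact, then so is $\W(\mathcal{K})$; evaluation at a fixed $z$ is a bounded linear map $L^2(G;H_1) \to H_1$ only after mollification, so instead I would argue directly from $\HS$: the map $f \mapsto \W f(z) = \langle f, \psi_z\rangle_{H_2}$ is bounded from $\HS$ to $H_1$ (with norm $\le \|\psi_z\|_{H_2}$, using that $\{\psi_z\}$ is a bounded family), hence sends precompact sets to precompact sets, giving the first condition. For the tail condition, precompactness in $L^2(G;H_1)$ gives a finite $\epsilon$-net $f_1, \dots, f_m$; each $\|\W f_i\|_{L^2}^2 < \infty$ so its tails vanish as $N \to \infty$, and the triangle inequality propagates this uniformly over $\mathcal{K}$.

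\medskip

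The substantive direction is sufficiency. Assume the two conditions. Fix $\epsilon > 0$. Choose $N$ so the tail term is $< \epsilon$ uniformly over $\mathcal{K}$; it then suffices to show that $\{(\W f)\cdot \mathbf{1}_{E_N} : f \in \mathcal{K}\}$ is precompact in $L^2(E_N; H_1)$. Here is where I expect the main obstacle, and where the hypotheses must be used cleverly: to apply a vector-valued Fréchet–Kolmogorov theorem on the precompact set $E_N$ one needs $L^2$-equicontinuity of translates, $\sup_{f \in \mathcal{K}} \int_{E_N} \|\W f(uz) - \W f(z)\|_{H_1}^2\,dz \to 0$ as $u \to e$ in $G$. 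This does \emph{not} follow from the two stated conditions for a general bounded set in $L^2(G;H_1)$ — it must come from the structural fact that $\W f(z) = \langle f, \psi_z\rangle_{H_2}$ together with the norm-continuity of $z \mapsto \psi_z$ and the boundedness of $\mathcal{K}$ in $\HS$. Indeed, $\|\W f(uz) - \W f(z)\|_{H_1} = \|\langle f, \psi_{uz} - \psi_z\rangle_{H_2}\|_{H_1} \le \|f\|_{\HS}\,\|\psi_{uz} - \psi_z\|_{H_2}$ pointwise, and by left-uniform-continuity of $z \mapsto \psi_z$ on the precompact set $\overline{E_N}$ (upgraded from pointwise norm-continuity via a standard compactness argument), $\sup_{z \in E_N}\|\psi_{uz} - \psi_z\|_{H_2} \to 0$ as $u \to e$. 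Combined with $\sup_{f \in \mathcal{K}} \|f\|_{\HS} < \infty$ and $|E_N| < \infty$, this gives the required equicontinuity.

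\medskip

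With equicontinuity in hand, I would invoke the Fréchet–Kolmogorov–Riesz theorem for $L^p$ of functions valued in a Banach space $X$ (see e.g.\ the vector-valued version in the literature, or reprove it directly): a bounded subset of $L^2(E_N; H_1)$ with $E_N$ of finite measure is precompact provided it is $L^2$-equicontinuous under translation and, for each $z$, the set of evaluations is precompact in $H_1$ — the latter being our first hypothesis, the former just established. (One minor technical point: one mollifies by an approximate identity supported near $e$, uses that the mollified family is uniformly close to the original by equicontinuity, and that the mollified functions, being continuous $H_1$-valued maps on a compact set with precompact range-closure, form a precompact set by a vector-valued Arzelà–Ascoli argument — here the pointwise-precompactness hypothesis is essential.) This produces a finite $2\epsilon$-net for $\{(\W f)\mathbf{1}_{E_N}\}$, and combining with the $\epsilon$-small tails yields a finite $3\epsilon$-net for $\W(\mathcal{K})$ in $L^2(G;H_1)$. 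Since $\W$ is an isometry, $\mathcal{K}$ is precompact in $\HS$, completing the proof.
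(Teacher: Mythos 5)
Your proof is essentially correct, but it follows a genuinely different route from the paper. The paper's sufficiency argument is sequential: starting from a weakly convergent sequence in $\mathcal{K}$, it uses the pointwise precompactness hypothesis together with separability of $G$, the norm continuity of $z \mapsto \psi_z$, and a diagonal extraction to get $\|\W(f-f_n)(z)\|_{H_1} \to 0$ for every $z$, identifies the limit via weak convergence, and then concludes by dominated convergence on the finite-measure set $E_M$ plus the uniform tail hypothesis; no compactness theorem in $L^2(G;H_1)$ is invoked. You instead transport everything to $L^2(G;H_1)$ and run a Fr\'echet--Kolmogorov/Arzel\`a--Ascoli argument, and your key observation -- that translation equicontinuity is automatic from $\|\W f(w)-\W f(z)\|_{H_1}\le \|f\|_{\HS}\|\psi_w-\psi_z\|_{H_2}$, the norm continuity of $z\mapsto\psi_z$, and the boundedness of $\mathcal{K}$ -- is exactly the structural input the paper exploits in its diagonalization step, so both proofs rest on the same facts. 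Two remarks on your execution. First, the mollification detour is unnecessary: since each $\W f$ is already a continuous $H_1$-valued function with the uniform modulus of continuity you derived, the vector-valued Arzel\`a--Ascoli theorem applies directly to $\{\W f|_{\overline{E_N}}: f\in\mathcal{K}\}\subseteq C(\overline{E_N};H_1)$ (equicontinuity plus pointwise precompactness), giving precompactness in $L^2(E_N;H_1)$ since $E_N$ has finite Haar measure; adding the tails finishes the proof. Second, if you do keep the mollification, the assertion that the mollified family is pointwise precompact needs one more line: the mollified value at $z$ is an average of $\W f(w)$ over $w$ in a fixed translate of the mollifier's support, and pointwise precompactness at each single $w$ does not immediately cover these averages; it does follow by covering the (compact) support translate with finitely many small neighborhoods, using your equicontinuity bound to see that $\{\W f(w): f\in\mathcal{K},\ w\in z\,\mathrm{supp}\,\rho\}$ is totally bounded, and then taking closed convex hulls (Mazur). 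With either repair the argument is complete; the paper's proof is somewhat more elementary and self-contained, while yours yields the statement as an instance of a general vector-valued Riesz--Kolmogorov principle.
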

    \begin{proof}
        For the forward direction, first note that each $\{\W f(z)\colon f \in \mathcal{K}\}$ is precompact in $H_1$ by the continuity of the map $f \mapsto \W f(z)$ from $\HS$ to $H_1$. Also, by the continuity of $\W \colon \HS \rightarrow L^2(G;H_1)$, we have that $\W (\mathcal{K})$ is precompact in $L^2(G;H_1)$, which implies
        \[
            \lim_{N \rightarrow \infty} \sup_{f \in \mathcal{K}} \int_{G \setminus E_N} \| \W f(z) \|_{H_1}^2 \, dz = 0.
        \]
        
        For the reverse direction, assume without loss of generality that $\mathcal{K}$ is closed. Suppose $\{ f_n \} \subseteq \mathcal{K}$ converges weakly to $f$.  By the first hypothesis, diagonalization, separability of $G$, and strong continuity of $z \mapsto \psi_z$, we can pass to a subsequence, which we also denote by $\{f_n\}$, such that
        \[
            \|\W (f_m - f_n)(z)\|_{H_1} \rightarrow 0
        \]
        for each $z \in G$ as $n,m \rightarrow \infty$. Since $f_n \rightarrow f$ weakly, we have that  $\W f_n(z) \rightarrow \W f(z)$ weakly, and since weak limits are unique, it follows that
        \[
            \|\W (f - f_n)(z)\|_{H_1} \rightarrow 0.
        \]
        Note that for any $M \in \N$,
        \[
            \| f - f_n \|_\HS^2 = \int_{E_M} \|\W (f-f_n)(z) \|_{H_1}^2 \, dz + \int_{G \setminus E_M} \|\W (f-f_n)(z) \|_{H_1}^2 \, dz .
        \]
        Now let $\epsilon > 0$ and choose $M = M(\epsilon) \in \mathbb{N}$ large enough so that 
        \[
            \sup_{g \in \mathcal{K}} \int_{G \setminus E_M} \| \W g (z) \|_{H_1}^2 \, d \mu(z) < \epsilon.
        \]
        Then
        \[
            \sup_{n \in \N} \int_{G \setminus E_M} \|\W (f-f_n)(z) \|_{H_1}^2 \, dz < 4\epsilon.
        \]
        Also note that
        \[
            \int_{E_M} \| \W(f-f_n)(z) \|_{H_1}^2 \, dz \rightarrow 0
        \]
        by dominated convergence, so we can choose $n$ large enough so that $\|f - f_n\|_\HS^2 < 5 \epsilon$. \looseness=-1
    \end{proof}

    We will use the following in the proof of Theorem \ref{Abstractbi-parameterCompactnessTheorem}.
    \begin{lemma}\label{WeakCpt>vanishing}
        If a bounded linear operator $T \colon \HS \rightarrow \HS$ is partially localized and weakly compact, then 
        \[
            \lim_{N \rightarrow \infty} \sup_{\substack{f \in \HS \\ \|f\|_{\HS} \leq 1}} \int_{G \setminus E_N} \|\W T f(z)\|_{H_1}^2 \, dz = 0.
        \]
    \end{lemma}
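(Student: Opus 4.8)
The plan is to transfer the problem to the wavelet side, where $T$ acts as an integral operator with operator-valued kernel, and then split the tail into a near-diagonal part handled by weak compactness and a far part handled by partial localization. First I would record that, writing $\phi := \W f$ (so that $\|\phi\|_{L^2(G;H_1)} = \|f\|_{\HS}$ by the isometry property of $\W$), the operator $\W T\W^*$ on $L^2(G;H_1)$ has operator-valued kernel $(z,w)\mapsto T_{(w,z)}$, with the defining Bochner integral converging absolutely for almost every $z$ thanks to the first pair of conditions in Definition \ref{PartialLocalizationDefinition}; since $\W$ is an isometry, $\W T f = \W T\W^*\W f$, which yields
\[
    \W Tf(z) = \int_G T_{(w,z)}\,\W f(w)\,dw .
\]
For a parameter $M \in \N$ to be chosen, I would then split the inner integral according to whether $w\in zE_M$ or $w\notin zE_M$, obtaining $\int_{G\setminus E_N}\|\W Tf(z)\|_{H_1}^2\,dz \lesssim \mathrm{I}_{N,M} + \mathrm{II}_{N,M}$ with the obvious meaning.

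For the far part $\mathrm{II}_{N,M}$, I would bound $\big\|\int_{G\setminus zE_M}T_{(w,z)}\phi(w)\,dw\big\|_{H_1}$ by $\int_{G\setminus zE_M}\|T_{(w,z)}\|_{H_1\to H_1}\|\phi(w)\|_{H_1}\,dw$ and apply Cauchy--Schwarz after the splitting $\|T_{(w,z)}\|\|\phi(w)\| = (\|T_{(w,z)}\|\omega(w))^{1/2}(\|T_{(w,z)}\|\omega(w)^{-1})^{1/2}\|\phi(w)\|$. The vanishing conditions of Definition \ref{PartialLocalizationDefinition}, read with the two indices interchanged, give $\int_{G\setminus zE_M}\|T_{(w,z)}\|\omega(w)\,dw \le \epsilon_M\,\omega(z)$ with $\epsilon_M\to 0$ as $M\to\infty$, and the boundedness conditions give $\int_G\|T_{(w,z)}\|\omega(z)\,dz \le C\,\omega(w)$; inserting these and applying Fubini produces $\mathrm{II}_{N,M}\lesssim\epsilon_M\|f\|_{\HS}^2$, \emph{with no dependence on $N$}. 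For the near part $\mathrm{I}_{N,M}$, the crucial geometric fact is that if $z\notin E_N$, $w\in zE_M$, and $N\ge 2M$, then $w\notin E_{\lfloor N/2\rfloor}$ (using $E_K\subseteq E_{K+1}$, $E_K^2\subseteq E_{2K}$, and $E_M^{-1}=E_M$), while $z\in wE_M$ by the symmetry $w\in zE_M\iff z\in wE_M$; hence $\|T_{(w,z)}\|_{H_1\to H_1}\le\delta_N$, where $\delta_N := \sup_{w\notin E_{\lfloor N/2\rfloor}}\sup_{z\in wE_M}\|T_{(w,z)}\|_{H_1\to H_1}\to 0$ as $N\to\infty$ by the weak compactness hypothesis. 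A Cauchy--Schwarz in $w$ over $zE_M$ (which has left Haar measure $|E_M|$), followed by Fubini and the identity $1_{zE_M}(w) = 1_{wE_M}(z)$, then gives $\mathrm{I}_{N,M}\le \delta_N^2\,|E_M|^2\,\|f\|_{\HS}^2$.

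To conclude, given $\epsilon>0$ I would first fix $M$ large enough that the uniform-in-$N$ bound for $\mathrm{II}_{N,M}$ is below $\epsilon$, and then take $N$ large enough that $\delta_N^2|E_M|^2<\epsilon$, finally passing to the supremum over $\|f\|_{\HS}\le 1$. I expect the main obstacle to be keeping the two mechanisms from colliding: the partial-localization estimate must be uniform in $N$ (which works precisely because enlarging the window $M$ only helps, irrespective of $N$), while the weak-compactness estimate must exploit that as $N\to\infty$ the frame index $w$ is forced out to infinity yet remains within a fixed window of $z$ --- this is exactly where the closure properties of the exhaustion $\{E_N\}$ and the symmetry of the window condition enter. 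The secondary nuisance is simply tracking which of the four conditions of Definition \ref{PartialLocalizationDefinition}, under which relabeling of the indices, controls each factor that appears.
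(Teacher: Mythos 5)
Your argument is correct and is essentially the paper's proof: the same kernel representation $\W Tf(z)=\int_G T_{(w,z)}\W f(w)\,dw$, the same weighted Cauchy--Schwarz with $\omega^{\pm1/2}$ plus Fubini, the same split of the $w$-integral into the window $zE_M$ (handled by weak compactness after observing, via $E_{N'}E_M\subseteq E_{2N'}$ and $E_M^{-1}=E_M$, that the frame index is forced outside $E_{\lfloor N/2\rfloor}$) and its complement (handled by the relabeled partial-localization conditions, uniformly in $N$). The only difference is organizational: the paper first isolates the intermediate vanishing estimate \eqref{Lemma36Display} for $\sup_z\omega(z)^{-1}\int_{G\setminus E_N}\|T_{(z,w)}\|\omega(w)\,dw$ and then applies a single pointwise Cauchy--Schwarz, whereas you perform the near/far splitting directly on the kernel representation.
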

    \begin{proof}
        We first show that 
        \begin{align}\label{Lemma36Display}
            \lim_{N \rightarrow \infty} \sup_{z \in G} \omega(z)^{-1} \int_{G \setminus E_N} \|T_{(z,w)}\|_{H_1 \rightarrow H_1}  \omega(w) \, dw=0.
        \end{align}
        Partial localization allows us to just prove
        \[
            \lim_{N \rightarrow \infty} \sup_{z \in G} \omega(z)^{-1} \int_{(G \setminus E_N) \cap zE_M} \|T_{(z,w)}\|_{H_1 \rightarrow H_1} \, \omega(w) \, dw = 0
        \]
        for every $M \geq 0$. Let $N' \geq M$ and $N = 2N'$. If the domain of integration is nonempty, then $z \notin E_{N'}$. Indeed, otherwise $w \in E_{N'} E_M \subseteq E_{N'}^2 \subseteq E_{2N'} = E_N$. Therefore, letting $C(M) = \int_{E_M} \omega(w)\,dw$, we bound
        \[
            \sup_{z \in G} \omega(z)^{-1} \int_{(G\setminus E_N) \cap zE_M} \|T_{(z,w)}\|_{H_1 \rightarrow H_1} \, \omega(w) \, dw \leq C(M) \sup_{\substack{z \notin E_{N'}\\ w \in zE_M}} \|T_{(z,w)}\|_{H_1 \rightarrow H_1},
        \]
        which goes to zero with $N'$ by weak compactness, establishing \eqref{Lemma36Display}. 
        
        We now estimate
        \[
            \|\W T f(z)\|_{H_1} = \sup_{\|g\|_{H_1} \leq 1} \left| \langle T f, g \otimes \psi_{z} \rangle_\HS \right| 
        \]
        pointwise. For any such $g$, we have that
        \begin{align*}
            |\langle T f, g \otimes \psi_{z}  \rangle_\HS |^2 &= \left| \int_{G} \langle T(\W f(w) \otimes \psi_w) , g \otimes \psi_{z} \rangle_{\HS} \, dw \right|^2\\
            &\leq \left( \int_G \| \W f(w)\|_{H_1} \|T_{(w,z)} \|_{H_1\rightarrow H_1} \, dw \right)^2\\
            &\leq \omega(z)\int_{G} \| \W f(w) \|_{H_1}^2 \|T_{(w,z)}\|_{H_1 \rightarrow H_1} \omega(w)^{-1} \, dw\\
            &\quad\quad\times \omega(z)^{-1} \int_G \|T_{(w,z)}\|_{H_1 \rightarrow H_1} \omega(w) \, dw\\
            &\leq C\int_G \|\W f(w)\|_{H_1}^2 \omega(w)^{-1}\|T_{(w,z)}\|_{H_1 \rightarrow H_1} \omega(z) \, dw.
        \end{align*}
        The conclusion follows from Fubini's theorem and \eqref{Lemma36Display}, noting that the roles of $z$ and $w$ have been swapped.     
    \end{proof}

    \begin{proof}[Proof of Theorem \ref{Abstractbi-parameterCompactnessTheorem}]
        By Proposition \ref{RieszKolmogorov} and Lemma \ref{WeakCpt>vanishing}, it only remains to show that $\{ \W T f(z) \}_{\|f\|_\HS \leq 1}$ is compact in $H_1$ for each $z \in G$. To this end, let $P_N \colon H_1 \rightarrow H_1$ be a sequence of finite rank orthogonal projections increasing to the identity. We will show that
        \[
            \lim_{N \rightarrow \infty} \sup_{\|f\|_\HS \leq 1} \| P_N^\perp \W T f (z)\|_{H_1} = 0
        \]
        for all $z \in G$, which is equivalent to the precompactness of $\{\mathcal{W}Tf(z)\}_{\|f\|_{\mathcal{H}\leq 1}}$ by \cite{MSWW2023}*{Theorem B}. \looseness=-1
        
        %Let $\epsilon > 0$. 
        By the same argument as in the final estimate in the proof of Lemma \ref{WeakCpt>vanishing}, we have
        \[
            \|P_N^\perp \W T f(z)\|_{H_1}^2 \lesssim \int_G \| \W f(w)\|_{H_1}^2 \omega(w)^{-1}\|P_N^\perp T_{(w,z)}\|_{H_1 \rightarrow H_1} \omega(z) \, dw.
        \]
        Note that $P_N^\perp T_{(w,\zeta)} \rightarrow P_N^\perp T_{(w,z)}$ in operator norm as $\zeta \rightarrow z$ uniformly in $w$ and $N$, since $z \mapsto \psi_z$ is norm continuous and $\{\psi_w\}_{w \in G}$ is bounded. This, along with the continuity of $\omega$, allow us to choose an open, precompact set $U = U(z) \ni z$ such that
        \begin{align*}
            \omega(w)^{-1}\|P_N^\perp T_{(w,z)}\|_{H_1 \rightarrow H_1} \omega(z) &\lesssim \omega(w)^{-1} \inf_{\zeta \in U} \|P_N^\perp T_{(w,\zeta)}\|_{H_1 \rightarrow H_1} \omega(\zeta)\\
            &\lesssim \omega(w)^{-1} \int_{U} \|P_N^\perp T_{(w,\zeta)}\|_{H_1 \rightarrow H_1} \omega(\zeta) \, d\zeta.
        \end{align*}
        It suffices to show that
        \[
            \lim_{N \rightarrow \infty} \sup_{\|f\|_\HS \leq 1} \int_{G} \| \W f(w)\|_{H_1}^2 \omega(w)^{-1} \int_U \|P_N^\perp T_{(w,\zeta)}\|_{H_1 \rightarrow H_1} \omega(\zeta) \, d\zeta dw = 0.
        \]
        
        Let $\epsilon > 0$, $\|f\|_\HS \leq 1$, and choose $M$ large enough so that
        \[
            \sup_{w \in G} \omega(w)^{-1} \int_{G \setminus wE_{M}} \|T_{(w,\zeta)} \|_{H_1 \rightarrow H_1} \omega(\zeta) \, d \zeta < \epsilon.
        \]
        Then
        \begin{align*}
            &\int_{G} \| \W f(w)\|_{H_1}^2 \omega(w)^{-1} \int_U \|P_N^\perp T_{(w,\zeta)}\|_{H_1 \rightarrow H_1} \omega(\zeta) \, d\zeta \, dw\\
            &\quad\quad\leq \epsilon+ \int_{G} \| \W f(w)\|_{H_1}^2 \omega(w)^{-1} \int_{U \cap w E_M} \|P_N^\perp T_{(w,\zeta)}\|_{H_1 \rightarrow H_1} \omega(\zeta) \, d\zeta dw.
        \end{align*}
        Note that the integrand in $w$ has support in some $E_{M'}$. Indeed, since $U$ is precompact, $U \subseteq E_{M''}$ for some $M''$. If $\zeta \in U \cap w E_M \subseteq E_{M''} \cap w E_M$, then $w \in E_{M''} E_M^{-1} \subseteq E_{M''} E_{M'''} \subseteq E_{2\max (M'',M''')} =: E_{M'}$. Therefore, we can estimate
        \begin{align*}
            \int_{G} \| \W f(w)\|_{H_1}^2 &\omega(w)^{-1} \int_{U \cap w E_M} \|P_N^\perp T_{(w,\zeta)}\|_{H_1 \rightarrow H_1} \omega(\zeta) \, d\zeta dw\\
            &\leq \int_{E_{M'}} \| \W f(w) \|_{H_1}^2 \omega(w)^{-1} \int_{U} \|P_N^\perp T_{(w,\zeta)}\|_{H_1 \rightarrow H_1} \omega(\zeta) \, d\zeta dw\\
            &\leq C \int_{E_{M'}} \int_U \|P_N^\perp T_{(w,\zeta)} \|_{H_1 \rightarrow H_1} \, d\zeta dw,
        \end{align*}
        where $C = \|\omega^{-1}\|_{L^\infty(E_{M'})} \|\omega\|_{L^\infty(U)} \| \|\psi_z\|_{H_2}^2 \|_{L^\infty(E_{M'})} < \infty$. We can then choose $N$ such that the above is bounded by $\epsilon$ by the compactness of each $T_{(w,\zeta)}$ and dominated convergence.
    \end{proof}

%%%%%%%%%%%%%%%%%%%%%%%%%%%%%%%%%%%%%%%%%%%%%%%%%%%%%%%%%%%%%%%%%%%%%%%%%%%

\section{Compactness of Bi-Parameter Paraproducts}\label{ParaproductSection}

In this section, we address the compactness of the bi-parameter paraproducts. We first introduce the top-level paraproducts and show that they are compact using the hypothesis $T1,T^t1,T_t1,T^t_t1 \in \text{CMO}(\R^{n_1}\times\R^{n_2})$. We then define the partial paraproducts and deduce their compactness from Theorem \ref{Abstractbi-parameterCompactnessTheorem} and the mixed weak compactness/CMO property.

\subsection{Top-level paraproducts}

Below, we let $\phi_{z}^1 := \varphi_{z_1}^1 \otimes \varphi_{z_2}^1$, where $\varphi_{z_i}^1 = t_i^{-n_i} \varphi\big( \frac{\cdot - x_i}{t_i} \big)$ for $z_i = (x_i,t_i) \in \mathbb{R}^{n_i+1}_+$  and some fixed nonnegative $L^1(\mathbb{R}^{n_i})$-normalized $\varphi \in \DD(\R^{n_i})$.

\begin{defin}\label{TopLevelParaproductDefinition}
    Given $b \in \text{BMO}(\R^{n_1} \times \R^{n_2})$, the top-level paraproduct, $\Pi_b$, is given by 
    \[
        \Pi_b f = \iint_{\R_+^{n_1+1} \times \R_+^{n_2 + 1}} \langle b, \psi_z \rangle \langle f, \phi_z^1 \rangle \psi_z \, dz.
    \]
\end{defin}
\noindent Note that 
$$\Pi_b 1 = b \quad\quad\text{and}\quad\quad \Pi_{b,t}1 = \Pi_{b,t}^t 1 = \Pi_b^t1 = 0.$$

We recall that top-level paraproducts are bi-parameter CZOs, see \cite{J1985}*{p. 78}. 
\begin{prop}\label{TopLevelParaproductCZO}
    If $b \in \text{BMO}(\R^{n_1} \times \R^{n_2})$, then $\Pi_b$ is a bi-parameter $1$-CZO with 
    $$
        \|\Pi_b\|_{1\text{CZO}(\mathbb{R}^{n_1}\times\mathbb{R}^{n_2})}\lesssim \|b\|_{BMO(\R^{n_1} \times \R^{n_2})}.
    $$
\end{prop}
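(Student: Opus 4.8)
The plan is to check, for $\Pi_b$, the three requirements in the definition of a bi-parameter $1$-CZO: $L^2(\R^{n_1}\times\R^{n_2})$-boundedness of $\Pi_b$ and of its partial transpose $(\Pi_b)_t$ with operator norms $\lesssim\|b\|_{\text{BMO}(\R^{n_1}\times\R^{n_2})}$, and the existence of $1\text{CZO}(\R^{n_j})$-valued $1$-CZ kernels $K_1, K_2$ for $\Pi_b$ with kernel constants $\lesssim\|b\|_{\text{BMO}(\R^{n_1}\times\R^{n_2})}$. For the first, I would pair $\langle\Pi_b f, g\rangle = \iint\langle b,\psi_z\rangle\langle f,\phi_z^1\rangle\langle g,\psi_z\rangle\,dz$ and apply Cauchy--Schwarz in $z = (z_1,z_2)$: the factor $(\iint|\langle g,\psi_z\rangle|^2\,dz)^{1/2}$ equals $\|g\|_{L^2(\R^{n_1}\times\R^{n_2})}$ by the Parseval frame property of $\{\psi_z\}$, and in the remaining factor $(\iint|\langle b,\psi_z\rangle|^2|\langle f,\phi_z^1\rangle|^2\,dz)^{1/2}$ one bounds $|\langle f,\phi_z^1\rangle|$, using that $\varphi\in\D$ is bounded with compact support, by a constant times the average of $|f|$ over $B(x_1,t_1)\times B(x_2,t_2)$, hence by $\inf_{x'\in B(x_1,t_1)\times B(x_2,t_2)} M_{\mathrm s}f(x')$, where $M_{\mathrm s}$ is the strong maximal operator. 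Since $|\langle b,\psi_z\rangle|^2\,dz$ is a product Carleson measure with constant $\lesssim\|b\|_{\text{BMO}}^2$ by the Chang--Fefferman characterization of $\text{BMO}(\R^{n_1}\times\R^{n_2})$, a layer-cake decomposition over the level sets $\{M_{\mathrm s}f>\lambda\}$ and the $L^2$-boundedness of $M_{\mathrm s}$ give $\iint|\langle b,\psi_z\rangle|^2|\langle f,\phi_z^1\rangle|^2\,dz\lesssim\|b\|_{\text{BMO}}^2\|f\|_{L^2}^2$. The partial transpose satisfies $\langle(\Pi_b)_t H, L\rangle = \iint\langle b,\psi_z\rangle\langle H,\psi_{z_1}\otimes\varphi_{z_2}^1\rangle\langle L,\varphi_{z_1}^1\otimes\psi_{z_2}\rangle\,dz$; being a mixed paraproduct, its $L^2$-boundedness with the same norm control is classical, and I would either cite it from \cite{J1985} or reprove it by a scale-by-scale Carleson argument of the same flavour.

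For the kernels, fix $x\ne y$ in $\R^{n_1}$. Unwinding Definition \ref{TopLevelParaproductDefinition}, writing $\langle f_1,\varphi_{z_1}^1\rangle = \int f_1(y)\varphi_{z_1}^1(y)\,dy$ and $\langle g_1,\psi_{z_1}\rangle = \int g_1(x)\psi_{z_1}(x)\,dx$, and using Fubini, one obtains
\[
    \langle\Pi_b(f_1\otimes f_2), g_1\otimes g_2\rangle = \iint_{\R^{2n_1}} f_1(y) g_1(x)\,\langle\pi_{\beta_{x,y}}f_2, g_2\rangle\,dx\,dy,
\]
where $\pi_{\beta_{x,y}}$ is the one-parameter top-level paraproduct on $\R^{n_2}$ whose symbol has wavelet coefficients $\langle\beta_{x,y},\psi_{z_2}\rangle = \int_{\R^{n_1+1}_+}\varphi_{z_1}^1(y)\psi_{z_1}(x)\langle b,\psi_{z_1}\otimes\psi_{z_2}\rangle\,d\lambda(z_1)$. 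Thus $K_1(x,y) = \pi_{\beta_{x,y}}$, and $K_2(x,y) = \pi_{\beta'_{x,y}}$ for an analogous symbol $\beta'_{x,y}\in\text{BMO}(\R^{n_1})$ obtained by interchanging the roles of the two factors; this is legitimate because $\Pi_b$ is symmetric under swapping the tensor factors with $b$ replaced by its coordinate transpose $(u_1,u_2)\mapsto b(u_2,u_1)$, which has the same product $\text{BMO}$ norm. Since $\beta\mapsto\pi_\beta$ is linear and one-parameter top-level paraproducts are $1$-CZOs with $\|\pi_\beta\|_{1\text{CZO}(\R^n)}\lesssim\|\beta\|_{\text{BMO}(\R^n)}$, it suffices to prove
\[
    \|\beta_{x,y}\|_{\text{BMO}(\R^{n_2})}\lesssim\frac{\|b\|_{\text{BMO}}}{|x-y|^{n_1}}, \qquad \|\beta_{x,y}-\beta_{x,y'}\|_{\text{BMO}(\R^{n_2})},\ \|\beta_{y,x}-\beta_{y',x}\|_{\text{BMO}(\R^{n_2})}\lesssim\frac{\|b\|_{\text{BMO}}\,|y-y'|}{|x-y|^{n_1+1}}
\]
for $|y-y'|\le\tfrac12|x-y|$, together with the analogues for $\beta'$.

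To prove these I would test $\beta_{x,y}$ against Carleson boxes $\widehat I$ over cubes $I\subseteq\R^{n_2}$ using the square-function description of $\text{BMO}(\R^{n_2})$. The first-variable coefficient $\Phi(z_1) := \varphi_{z_1}^1(y)\psi_{z_1}(x)$ is, by the compact supports of $\psi$ and $\varphi$, supported where $t_1\gtrsim|x-y|$ and $x_1\in B(x,t_1)$, with $|\Phi(z_1)|\lesssim t_1^{-3n_1/2}$; in particular $\|\Phi\|_{L^1(d\lambda)}\lesssim|x-y|^{-3n_1/2}$. Cauchy--Schwarz in $z_1$ yields
\[
    \int_{\widehat I}|\langle\beta_{x,y},\psi_{z_2}\rangle|^2\,dz_2 \le \|\Phi\|_{L^1(d\lambda)}\int_{\R^{n_1+1}_+}|\Phi(z_1)|\Bigl(\int_{\widehat I}|\langle b,\psi_{z_1}\otimes\psi_{z_2}\rangle|^2\,dz_2\Bigr)\,d\lambda(z_1),
\]
and I would then split the $z_1$-integral into the scales $t_1\approx2^k|x-y|$, $k\ge -1$, noting that the resulting region of $(z_1,z_2)$ lies in the product Carleson region over $B(x,2^{k+2}|x-y|)\times 3I$, so that the Carleson condition bounds the inner integral at scale $k$ by $\lesssim\|b\|_{\text{BMO}}^2(2^k|x-y|)^{n_1}|I|$; multiplying by $|\Phi|\lesssim(2^k|x-y|)^{-3n_1/2}$, summing the geometric series in $k$, and inserting the bound on $\|\Phi\|_{L^1(d\lambda)}$ gives $\int_{\widehat I}|\langle\beta_{x,y},\psi_{z_2}\rangle|^2\,dz_2\lesssim\|b\|_{\text{BMO}}^2|I||x-y|^{-2n_1}$, the size estimate. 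The smoothness estimates follow the same way after differentiating $\varphi$ (resp.\ $\psi$) in the appropriate argument, which produces the factor $|y-y'|\,t_1^{-1}\lesssim|y-y'|\,|x-y|^{-1}$ on the relevant scales and uses $\varphi,\psi\in C^1$, giving the required exponent $\delta=1$; the same scheme handles $\beta'$, hence $K_2$. I expect this last block --- the scale-by-scale coupling of the product Carleson condition on $b$ with the first-variable localization of $\Pi_b$ --- to be the main obstacle, the $L^2$-boundedness being standard Chang--Fefferman theory and the remaining steps routine bookkeeping.
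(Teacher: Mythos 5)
Your proposal is essentially correct, but it is worth noting that the paper does not prove this proposition at all: it simply recalls it from Journ\'e \cite{J1985}*{p. 78}, since the statement is a known part of the classical bi-parameter theory. What you have written is, in effect, a reconstruction of that cited result, and it is carried out in exactly the vector-valued-kernel framework the paper adopts: the identification $K_1(x,y)=\pi_{\beta_{x,y}}$ with $\langle\beta_{x,y},\psi_{z_2}\rangle=\int\varphi^1_{z_1}(y)\psi_{z_1}(x)\langle b,\psi_{z_1}\otimes\psi_{z_2}\rangle\,d\lambda(z_1)$, the reduction of the $1\text{CZO}(\R^{n_2})$-valued size and smoothness bounds to $\text{BMO}(\R^{n_2})$ estimates on $\beta_{x,y}$ via linearity of $\beta\mapsto\pi_\beta$, and the scale-by-scale coupling of the support localization in $z_1$ (forcing $t_1\gtrsim|x-y|$, $x_1\in B(x,t_1)$, $|\Phi|\lesssim t_1^{-3n_1/2}$) with the product Carleson condition over $B(x,2^{k+2}|x-y|)\times 3I$ all check out, and the arithmetic does produce $\|b\|_{\text{BMO}}^2|I|\,|x-y|^{-2n_1}$ and the $\delta=1$ smoothness gain. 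The $L^2$ bound for $\Pi_b$ via the open-set Carleson characterization of Chang--Fefferman BMO and the strong maximal function is also the standard argument. The one genuine soft spot is the partial transpose $(\Pi_b)_t$: its $L^2$ bound is \emph{not} a routine Cauchy--Schwarz/Carleson-embedding estimate of "the same flavour" (one typically needs Journ\'e's lemma or an $H^1$--BMO duality argument in the product setting), so if you want the writeup self-contained you should either flesh that out or cite it explicitly as you propose; citing is entirely consistent with the paper, which cites Journ\'e for the whole proposition. Minor technical points you gloss over (absolute convergence justifying the Fubini steps defining $K_1,K_2$, and the continuous wavelet characterizations of one-parameter and product BMO) are standard and acceptable in a sketch.
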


\begin{thm}\label{TopLevelParaproductCompactness}
    If $b \in \text{CMO}(\R^{n_1} \times \R^{n_2})$, then $\Pi_b$, $\Pi_b^t$, $\Pi_{b,t}$, and $\Pi_{b,t}^t$ are compact on $L^2(\R^{n_1} \times \R^{n_2})$.\looseness=-1
\end{thm}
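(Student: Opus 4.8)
The plan is to combine three ingredients: the operator bound of Proposition \ref{TopLevelParaproductCZO}, the fact that the compact operators form a norm-closed subspace of $\mathcal{B}(L^2(\R^{n_1}\times\R^{n_2}))$, and the one-parameter compactness result contained in Theorem B. Since the $1\text{CZO}(\R^{n_1}\times\R^{n_2})$ norm dominates $\|T\|_{L^2\to L^2}$ and $\|T_t\|_{L^2\to L^2}$, Proposition \ref{TopLevelParaproductCZO} yields $\|\Pi_b\|,\|\Pi_b^t\|,\|\Pi_{b,t}\|,\|\Pi_{b,t}^t\|\lesssim\|b\|_{\text{BMO}(\R^{n_1}\times\R^{n_2})}$, so the set $\mathcal{C}:=\{b:\Pi_b,\Pi_b^t,\Pi_{b,t},\Pi_{b,t}^t\text{ are all compact on }L^2(\R^{n_1}\times\R^{n_2})\}$ is a norm-closed linear subspace of $\text{BMO}(\R^{n_1}\times\R^{n_2})$. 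Because $\text{CMO}(\R^{n_1}\times\R^{n_2})$ is, by Definition \ref{CMODefinition}, the $\text{BMO}$-closure of $C_0(\R^{n_1}\times\R^{n_2})$, and because finite sums of elementary tensors $b_1\otimes b_2$ with $b_i\in C_0(\R^{n_i})$ are dense in $C_0(\R^{n_1}\times\R^{n_2})$ in the uniform norm (hence in $\text{BMO}$, since $\|\cdot\|_{\text{BMO}}\lesssim\|\cdot\|_{L^\infty}$) by the Stone--Weierstrass theorem for $C_0$-spaces, it suffices to show $b_1\otimes b_2\in\mathcal{C}$ whenever $b_i\in C_0(\R^{n_i})\subseteq\text{CMO}(\R^{n_i})$.

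For such a symbol I would reduce to the one-parameter theory. Identify $L^2(\R^{n_1}\times\R^{n_2})$ with $L^2(\R^{n_1})\Ptens L^2(\R^{n_2})$ and let $\pi_{b_i}$ be the one-parameter top-level paraproduct on $L^2(\R^{n_i})$ built from the same $\psi$ and $\varphi$ (the one-parameter analogue of Definition \ref{TopLevelParaproductDefinition}), so that $\pi_{b_i}$ is a one-parameter CZO with $\pi_{b_i}1=b_i$ and $\pi_{b_i}^t1=0$. Evaluating $\Pi_{b_1\otimes b_2}$ on an elementary tensor $g_1\otimes g_2$ and using $\langle b_1\otimes b_2,\psi_{z_1}\otimes\psi_{z_2}\rangle=\langle b_1,\psi_{z_1}\rangle\langle b_2,\psi_{z_2}\rangle$ together with $\langle g_1\otimes g_2,\varphi_{z_1}^1\otimes\varphi_{z_2}^1\rangle=\langle g_1,\varphi_{z_1}^1\rangle\langle g_2,\varphi_{z_2}^1\rangle$ shows $\Pi_{b_1\otimes b_2}(g_1\otimes g_2)=(\pi_{b_1}\otimes\pi_{b_2})(g_1\otimes g_2)$; as both operators are bounded on $L^2(\R^{n_1}\times\R^{n_2})$ and the algebraic tensor product is dense, $\Pi_{b_1\otimes b_2}=\pi_{b_1}\otimes\pi_{b_2}$. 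Matching this against the defining dualities for the transposes in Section \ref{PreliminariesSection} gives $\Pi_{b_1\otimes b_2}^t=\pi_{b_1}^t\otimes\pi_{b_2}^t$, $\Pi_{b_1\otimes b_2,t}=\pi_{b_1}^t\otimes\pi_{b_2}$, and $\Pi_{b_1\otimes b_2,t}^t=\pi_{b_1}\otimes\pi_{b_2}^t$.

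Finally, since $b_i\in C_0(\R^{n_i})\subseteq\text{CMO}(\R^{n_i})$, the paraproduct $\pi_{b_i}$ satisfies the weak compactness property with $\pi_{b_i}1=b_i\in\text{CMO}(\R^{n_i})$ and $\pi_{b_i}^t1=0\in\text{CMO}(\R^{n_i})$, so Theorem B makes $\pi_{b_i}$ compact on $L^2(\R^{n_i})$; its transpose $\pi_{b_i}^t$ is then compact as well. Because the tensor product of two compact Hilbert-space operators is compact (approximate each factor by finite-rank operators in norm), all four operators of the previous paragraph are compact on $L^2(\R^{n_1})\Ptens L^2(\R^{n_2})=L^2(\R^{n_1}\times\R^{n_2})$, i.e., $b_1\otimes b_2\in\mathcal{C}$, completing the proof. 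This argument is routine given the infrastructure; the points most prone to error, rather than difficulty, are the correct matching of the partial transposes $\Pi_{b,t}$ and $\Pi_{b,t}^t$ with $\pi_{b_1}^t\otimes\pi_{b_2}$ and $\pi_{b_1}\otimes\pi_{b_2}^t$ under the convention $\langle T(f_1\otimes f_2),g_1\otimes g_2\rangle=\langle T_t(g_1\otimes f_2),f_1\otimes g_2\rangle$, and the (standard, cf. \cite{MS2023}) verification that a one-parameter top-level paraproduct with $\text{CMO}$ symbol satisfies the weak compactness property.
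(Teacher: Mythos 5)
Your proof is correct and follows essentially the same route as the paper: reduce via Proposition \ref{TopLevelParaproductCZO}, closedness of the compact operators, and density of elementary-tensor symbols in $\text{CMO}(\R^{n_1}\times\R^{n_2})$, then factor $\Pi_{b_1\otimes b_2}=\pi_{b_1}\otimes\pi_{b_2}$ (and likewise for the transposes) and use that tensor products of compact operators are compact. The only cosmetic differences are that the paper reduces to symbols in $\DD(\R^{n_1})\otimes\DD(\R^{n_2})$ and asserts the one-parameter compactness directly, whereas you reduce to $C_0$ elementary tensors via Stone--Weierstrass and invoke Theorem B; both are fine.
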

\begin{proof}
    Note that
    $$
        \overline{\DD(\R^{n_1}) \otimes \DD(\R^{n_2})}^{\text{BMO}(\R^{n_1} \times \R^{n_2})} \supseteq \overline{\DD(\R^{n_1}) \otimes \DD(\R^{n_2})}^{L^\infty(\R^{n_1} \times \R^{n_2})} = C_0(\R^{n_1} \times \R^{n_2}),
    $$
    which implies that $\overline{\DD(\R^{n_1}) \otimes \DD(\R^{n_2})}^{\text{BMO}(\R^{n_1} \times \R^{n_2})} = \text{CMO}(\R^{n_1} \times \R^{n_2})$. By a density argument and Proposition \ref{TopLevelParaproductCZO}, it suffices to show that $\Pi_{f \otimes g}$ is compact for $f \in \DD(\R^{n_1})$ and $g \in \DD(\R^{n_2})$. Since $\psi$ is the tensor product of two one-parameter wavelets and $\phi$ is the tensor product of one-parameter bump functions, we have
    $$
        \Pi_{f \otimes g} = \pi_f \otimes \pi_g,
    $$
    where $\pi_b$ is a classical one-parameter paraproduct. Since $\pi_f$, $\pi_f^t$, $\pi_g$, and $\pi_g^t$ are clearly compact on $L^2(\R^{n_i})$ for $f \in \DD(\R^{n_1})$ and $g \in \DD(\R^{n_2})$, it follows that any partial transpose of $\Pi_{f \otimes g} = \pi_f \otimes \pi_g$ is also compact on $L^2(\R^{n_1} \times \R^{n_2})$.
\end{proof}

\subsection{Partial paraproducts}

After subtracting the top-level paraproducts from a bi-parameter CZO, one is left with a top-level cancellative bi-parameter CZO, which is defined as follows.
\begin{defin}
    A bi-parameter CZO $T$ is top-level cancellative if 
$$
    \langle T(1 \otimes 1), \cdot \otimes \cdot\rangle \,  = \langle T(1 \otimes \cdot), \cdot \otimes 1\rangle = \langle T(\cdot \otimes 1), 1 \otimes \cdot\rangle
    = \langle T(\cdot \otimes \cdot), 1 \otimes 1\rangle = 0.
$$
\end{defin}

We next define the partial paraproducts. Below, $\pi_b$ is the one-parameter paraproduct with symbol $b$.  
    \begin{defin}
    The partial paraproducts $\varpi_{1,T}, \varpi_{2,T}\colon \mathcal{D}(\R^{n_1}) \otimes \mathcal{D}(\R^{n_2}) \rightarrow (\mathcal{D}(\R^{n_1}) \otimes \mathcal{D}(\R^{n_2}))'$ associated with a top-level cancellative bi-parameter $\delta$-CZO $T$ are given 
    by 
    $$
        \langle \varpi_{1,T} (f_1 \otimes f_2), g_1 \otimes g_2 \rangle = \langle \pi_{\langle T(1 \otimes f_2), \cdot \otimes g_2 \rangle} f_1 , g_1\rangle
    $$
    and
    $$
        \langle \varpi_{2,T} (f_1 \otimes f_2), g_1 \otimes g_2 \rangle = \langle \pi_{\langle T(f_1 \otimes 1), g_1 \otimes \cdot \rangle} f_2 , g_2\rangle.
    $$
    \end{defin}
    \noindent For the remainder of this section, we focus on $\varpi := \varpi_{1,T}$, the other being entirely symmetric. The relevant properties of the operator $\varpi$ are $\varpi_{(z,w)} = \pi_{\langle T(1 \otimes \psi_z), \cdot \otimes \psi_w \rangle}$, 
    $$
        \langle \varpi(1 \otimes \cdot), \cdot \otimes \cdot \rangle = \langle T(1 \otimes \cdot), \cdot \otimes \cdot \rangle, 
    $$
    and
    $$
        \langle \varpi(\cdot \otimes 1), \cdot \otimes \cdot \rangle = \langle \varpi(\cdot \otimes \cdot), 1 \otimes \cdot \rangle = \langle \varpi(\cdot \otimes \cdot), \cdot \otimes 1 \rangle = 0.
    $$
    
    We will need to know that the partial paraproducts associated with a top-level cancellative bi-parameter CZO are also bi-parameter CZOs to show that the fully cancellative operator obtained after subtracting off partial paraproducts is also a bi-parameter CZO. The following result was proved in \cite{J1985}*{Section 7}. 
    \begin{prop}
        The partial paraproduct $\varpi$ is a bi-parameter $\delta$-CZO. 
    \end{prop}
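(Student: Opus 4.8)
The plan is to verify the two conditions in the definition of a bi-parameter $\delta$-CZO: that $\varpi$ is a bi-parameter $\delta$-SIO with kernels obeying the required size and smoothness, and that $\varpi$ and $\varpi_t$ are bounded on $L^2(\R^{n_1}\times\R^{n_2})$. Expanding the one-parameter paraproduct in the definition of $\varpi = \varpi_{1,T}$ gives
$$
    \langle\varpi(f_1\otimes f_2),g_1\otimes g_2\rangle = \int_{\R_+^{n_1+1}}\langle T(1\otimes f_2),\psi_{z_1}\otimes g_2\rangle\,\langle f_1,\varphi_{z_1}^1\rangle\,\langle\psi_{z_1},g_1\rangle\,dz_1.
$$
The key preliminary step is to introduce the one-parameter operators $S_{z_1}$ on $\R^{n_2}$, defined by $\langle S_{z_1}f_2,g_2\rangle := \langle T(1\otimes f_2),\psi_{z_1}\otimes g_2\rangle$, and to prove the scaling estimate $\|S_{z_1}\|_{\delta\text{CZO}(\R^{n_2})}\lesssim t_1^{n_1/2}\|T\|_{\delta\text{CZO}(\R^{n_1}\times\R^{n_2})}$ for $z_1 = (x_1,t_1)$. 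For the operator norm, I would split $1 = \eta_{z_1} + (1-\eta_{z_1})$ with $\eta_{z_1}\in\DD(\R^{n_1})$ equal to $1$ on a large multiple of $\operatorname{supp}\psi_{z_1}$: the near piece is bounded by $\|T\|_{L^2\to L^2}$ since $\|\eta_{z_1}\|_{L^2(\R^{n_1})}\lesssim t_1^{n_1/2}$, while the far piece, in which $1-\eta_{z_1}$ and $\psi_{z_1}$ are separated in the first variable, is controlled using the $\delta\text{CZO}(\R^{n_2})$-valued kernel $K_1$ of $T$ together with $\int\psi_{z_1} = 0$. The kernel of $S_{z_1}$ is $(x_2,y_2)\mapsto\langle K_2(x_2,y_2)1,\psi_{z_1}\rangle$, and its $\delta$-CZ bounds follow by pairing the $\text{BMO}(\R^{n_1})$ function $K_2(x_2,y_2)1$ (and its $y_2$-difference) against $\psi_{z_1}$.

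With this in hand, for $f_1,g_1$ with disjoint supports Fubini's theorem rewrites the display as $\iint_{\R^{2n_1}}f_1(y_1)g_1(x_1)\langle K_1^\varpi(x_1,y_1)f_2,g_2\rangle\,dx_1dy_1$ with
$$
    K_1^\varpi(x_1,y_1) = \int_{\R_+^{n_1+1}}\varphi_{z_1}^1(y_1)\,\psi_{z_1}(x_1)\,S_{z_1}\,dz_1,
$$
which converges absolutely in the Banach space $\delta\text{CZO}(\R^{n_2})$ by the scaling estimate; the support and Lipschitz properties of $\varphi$ and $\psi$ then give $\|K_1^\varpi(x_1,y_1)\|_{\delta\text{CZO}(\R^{n_2})}\lesssim\|T\||x_1-y_1|^{-n_1}$ together with Lipschitz (hence $\delta$-) smoothness, exactly as for the kernel of a scalar paraproduct. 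For $f_2,g_2$ with disjoint supports, inserting the constant $1$ into the $K_2$-representation of $T$ (by a routine limiting argument, dominated by the $|x_2-y_2|^{-n_2}$ kernel bound) yields $\langle\varpi(f_1\otimes f_2),g_1\otimes g_2\rangle = \iint_{\R^{2n_2}}f_2(y_2)g_2(x_2)\langle K_2^\varpi(x_2,y_2)f_1,g_1\rangle\,dx_2dy_2$ with $K_2^\varpi(x_2,y_2) = \pi_{K_2(x_2,y_2)1}$. Since a $\delta$-CZO maps $L^\infty$ into $\text{BMO}$ and the one-parameter paraproduct satisfies $\|\pi_b\|_{1\text{CZO}(\R^{n_1})}\lesssim\|b\|_{\text{BMO}(\R^{n_1})}$ (the one-parameter analogue of Proposition \ref{TopLevelParaproductCZO}), applying this to $b = K_2(x_2,y_2)1$ and $b = (K_2(x_2,y_2)-K_2(x_2,y_2'))1$ shows $K_2^\varpi$ is a $\delta\text{CZO}(\R^{n_1})$-valued $\delta$-CZ kernel with norm $\lesssim\|T\|$. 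This establishes that $\varpi$ is a bi-parameter $\delta$-SIO.

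For the $L^2$ bounds, I would invoke Theorem A. Using the relations $\langle\varpi(\cdot\otimes 1),\cdot\otimes\cdot\rangle = \langle\varpi(\cdot\otimes\cdot),1\otimes\cdot\rangle = \langle\varpi(\cdot\otimes\cdot),\cdot\otimes 1\rangle = 0$ and $\langle\varpi(1\otimes\cdot),\cdot\otimes\cdot\rangle = \langle T(1\otimes\cdot),\cdot\otimes\cdot\rangle$ together with the top-level cancellativity of $T$, a short chase through the full and partial transpose identities gives $\varpi 1 = \varpi^t 1 = \varpi_t 1 = \varpi_t^t 1 = 0$, so the hypotheses of Theorem A reduce to the bi-parameter weak boundedness property for $\varpi$. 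From the display above and $|\langle T(1\otimes f_2),\psi_{z_1}\otimes g_2\rangle|\le\|S_{z_1}\|_{L^2\to L^2}\|f_2\|_2\|g_2\|_2\lesssim t_1^{n_1/2}\|T\|\|f_2\|_2\|g_2\|_2$, this is a routine computation involving only the standard bump-function orthogonality estimates for the pairings $\langle\cdot,\varphi_{z_1}^1\rangle$ and $\langle\psi_{z_1},\cdot\rangle$ (which keep the $dz_1$-integral $\lesssim\|T\|$). Theorem A then gives $\|\varpi\|_{L^2\to L^2}\lesssim\|T\|_{\delta\text{CZO}(\R^{n_1}\times\R^{n_2})}$, and since $\varpi_t$ is again a bi-parameter $\delta$-SIO (with the transposed kernels), satisfies the transpose-invariant weak boundedness property, and has all four of its "$1$"-images equal to $0$, the same argument bounds $\varpi_t$ on $L^2$. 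Hence $\varpi$ is a bi-parameter $\delta$-CZO.

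The main obstacle is the scaling estimate $\|S_{z_1}\|_{\delta\text{CZO}(\R^{n_2})}\lesssim t_1^{n_1/2}\|T\|$: the exponent $t_1^{n_1/2}$ is exactly what makes the $dz_1$-integrals defining $K_1^\varpi$, the kernel of $S_{z_1}$, and the weak boundedness estimate converge, and extracting it in the far regime is where the mean-zero cancellation of $\psi_{z_1}$ must be played against the kernel smoothness of $T$. Everything else is either a reduction to one-parameter paraproduct facts (for $K_2^\varpi$) or standard bump-function orthogonality (for the weak boundedness property). If one preferred to avoid Theorem A, the $L^2$ bound could instead be obtained by a direct continuous almost-orthogonality argument that, after expanding $\varpi$ against the Parseval frame $\{\psi_{z_1}\}$, reduces to a Carleson embedding once one knows $\int_{\widehat{Q}}\|S_{z_1}\|_{L^2\to L^2}^2\,dz_1\lesssim\|T\|^2|Q|$ for cubes $Q\subseteq\R^{n_1}$; that Carleson estimate, obtained by testing $T$ on $1_{3Q}\otimes(\cdot)$, would then be the essential point.
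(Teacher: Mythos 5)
Your argument is essentially correct in outline, but it is worth noting that the paper does not prove this proposition at all: it simply cites Journ\'e's original treatment of the partial paraproducts (\cite{J1985}, Section 7), where exactly this kind of argument is carried out. What you have written is a reasonable self-contained reconstruction of that classical proof. The decisive quantitative input you isolate --- that $S_{z_1}f_2:=\langle T(1\otimes f_2),\psi_{z_1}\otimes\cdot\,\rangle$ is a $\delta$-CZO on $\R^{n_2}$ with norm $\lesssim t_1^{n_1/2}\|T\|_{\delta\text{CZO}(\R^{n_1}\times\R^{n_2})}$ --- is correct (it is the statement that $\langle T(1\otimes\cdot),a\otimes\cdot\rangle$ has $\delta\text{CZO}(\R^{n_2})$ norm controlled by $\|a\|_{H^1(\R^{n_1})}$, applied to $a=\psi_{z_1}$ with $\|\psi_{z_1}\|_{H^1}\approx t_1^{n_1/2}$), and the same mechanism reappears in the paper's Lemma \ref{VarpiCoefficientsEstimate}. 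With it, your computations for $K_1^\varpi$ (absolute convergence in $\delta\text{CZO}(\R^{n_2})$, size $|x_1-y_1|^{-n_1}$, Lipschitz smoothness), for $K_2^\varpi(x_2,y_2)=\pi_{K_2(x_2,y_2)1}$ via the one-parameter paraproduct bound of Proposition \ref{TopLevelParaproductCZO}, and for the weak boundedness integral all check out, and the transpose chase showing $\varpi 1=\varpi^t1=\varpi_t1=\varpi_t^t1=0$ (two of the four vanishings using top-level cancellativity of $T$, two using $\pi_b^t1=0$) is correct, so Theorem A applies to $\varpi$ and to $\varpi_t$.

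Two soft spots deserve explicit acknowledgement if this were written out in full. First, the "routine limiting argument" that inserts the constant $1$ into the $K_1$- and $K_2$-kernel representations (needed both to define $\langle T(1\otimes f_2),\psi_{z_1}\otimes g_2\rangle$ and to identify the kernel of $S_{z_1}$ as $\langle K_2(x_2,y_2)1,\psi_{z_1}\rangle$ and $K_2^\varpi$ as $\pi_{K_2(x_2,y_2)1}$) is where most of the genuine technical work lives --- it is precisely what occupies Journ\'e's Section 7 --- and one should also justify pulling the $x_2,y_2$-integral through the symbol-to-paraproduct map. Second, applying Theorem A requires interpreting the four vanishing conditions and the weak boundedness property in Journ\'e's vector-valued framework; your verification is consistent with that framework, but the uniformity over all bump scales and centers in the WBP check, while routine given the $t_1^{n_1/2}$ bound, should be stated. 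Your closing alternative (a direct almost-orthogonality/Carleson embedding argument avoiding Theorem A) is also viable and closer in spirit to how the paper's Section 5 exploits $\varpi_{(z,w)}=\pi_{\langle T(1\otimes\psi_z),\cdot\otimes\psi_w\rangle}$.
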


The following estimate provides the partial localization of the partial paraproducts.
\begin{lemma}\label{VarpiCoefficientsEstimate}
        If $z, w \in \mathbb{R}^{n_2+1}_+$, then 
        \[
            \| \varpi_{(z,w)}\|_{L^2(\R^{n_1}) \rightarrow L^2(\R^{n_1})} \lesssim L(z^{-1}w),
        \]
        where 
        \begin{align*}
            L((x,t)) = \begin{cases}
            \frac{1}{t^{n_2/2 + \delta}} & t \geq 1, |x| \leq t\\
            \frac{t^{n_2/2}}{|x|^{n_2+\delta}} & t \geq 1, |x| > t\\
            t^{n_2/2 + \delta} & t < 1, |x| \leq 1\\
            \frac{t^{n_2/2 + \delta}}{|x|^{n_2 + \delta}} & t < 1, |x|  > 1
        \end{cases}.
        \end{align*}
    \end{lemma}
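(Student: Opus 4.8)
The plan is to reduce the lemma to a $\text{BMO}(\mathbb{R}^{n_1})$ estimate on the paraproduct symbol. Since $\varpi_{(z,w)} = \pi_{b_{z,w}}$ with $b_{z,w} := \langle T(1\otimes\psi_z),\cdot\otimes\psi_w\rangle$, the classical bound $\|\pi_b\|_{L^2(\mathbb{R}^{n_1})\to L^2(\mathbb{R}^{n_1})}\lesssim\|b\|_{\text{BMO}(\mathbb{R}^{n_1})}$ for one-parameter paraproducts shows it suffices to prove $\|b_{z,w}\|_{\text{BMO}(\mathbb{R}^{n_1})}\lesssim L(z^{-1}w)$. By $H^1$-$\text{BMO}$ duality and the atomic decomposition of $H^1(\mathbb{R}^{n_1})$, this reduces further to the uniform estimate
$$
|\langle T(1\otimes\psi_z), a\otimes\psi_w\rangle|\lesssim L(z^{-1}w)
$$
over all atoms $a$ (so $\text{supp}\,a\subseteq Q$, $\int a=0$, $\|a\|_{L^\infty(\mathbb{R}^{n_1})}\le|Q|^{-1}$ for a cube $Q\subseteq\mathbb{R}^{n_1}$). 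Writing $z^{-1}w=((w_x-z_x)/z_t,\,w_t/z_t)$, the four branches of $L$ correspond to which of $\psi_z,\psi_w$ has the coarser scale and to whether $|w_x-z_x|$ is small or large compared with that scale.

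When $\psi_z$ and $\psi_w$ have separated supports in $\mathbb{R}^{n_2}$ (the two ``far'' branches), I would expand $\langle T(1\otimes\psi_z), a\otimes\psi_w\rangle$ using the $\delta\text{CZO}(\mathbb{R}^{n_1})$-valued kernel $K_2$ of $T$, subtract the value of $K_2$ at the spatial center of the finer of the two wavelets (exploiting its vanishing moment), and estimate $|\langle(K_2(x_2,y_2)-K_2(x_2,z_x))1,a\rangle|\lesssim\|K_2(x_2,y_2)-K_2(x_2,z_x)\|_{\delta\text{CZO}(\mathbb{R}^{n_1})}$ via the $L^\infty\to\text{BMO}$ bound for Calderón--Zygmund operators and $\|a\|_{H^1(\mathbb{R}^{n_1})}\lesssim 1$. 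Feeding in the size and smoothness bounds for $K_2$ and integrating $\|\psi_z\|_{L^1(\mathbb{R}^{n_2})}\approx z_t^{n_2/2}$ and $\|\psi_w\|_{L^1(\mathbb{R}^{n_2})}\approx w_t^{n_2/2}$ against the kernel produces exactly the off-diagonal factors $t^{n_2/2}|x|^{-n_2-\delta}$ and $t^{n_2/2+\delta}|x|^{-n_2-\delta}$ appearing in $L$.

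The ``near'' branches, where $\psi_z$ and $\psi_w$ overlap in $\mathbb{R}^{n_2}$ at genuinely different scales, are the heart of the matter and the step I expect to need the most care. Say $\psi_z$ is the finer wavelet (the case $\psi_w$ finer is symmetric, with the top-level condition $\langle T(1\otimes1),\cdot\otimes\cdot\rangle=0$ replacing $\langle T(1\otimes\cdot),\cdot\otimes1\rangle=0$ after one peels the constant part of $\psi_z$ instead). Choosing a smooth cutoff $\rho$ equal to $1$ on a ball of radius $\sim z_t$ around $z_x$ and supported on a slightly larger one, I would write
$$
\psi_w=\psi_w(z_x)\cdot1+(\psi_w-\psi_w(z_x))\rho+(\psi_w-\psi_w(z_x))(1-\rho).
$$
The first term contributes $\psi_w(z_x)\langle T(1\otimes\psi_z), a\otimes1\rangle=0$ by the top-level cancellation of $T$; this is precisely what eliminates the otherwise non-decaying piece. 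The second term is controlled by the $L^2(\mathbb{R}^{n_1}\times\mathbb{R}^{n_2})$-boundedness of $T$ (for the part of ``$1$'' over a bounded enlargement of $Q$) and by the $\delta\text{CZO}(\mathbb{R}^{n_2})$-valued kernel $K_1$ together with $\int a=0$ (for the remaining part of ``$1$''), using that $\|(\psi_w-\psi_w(z_x))\rho\|_{L^2(\mathbb{R}^{n_2})}\lesssim(z_t/w_t)^{n_2/2+\delta}$ by the $C^\delta$ regularity of $\psi_w$ at scale $w_t$. The third term again has separated $\mathbb{R}^{n_2}$-supports and is handled as in the far branches; here the additional $(z_t/w_t)^\delta$ decay is furnished by $|\psi_w-\psi_w(z_x)|\lesssim(|x_2-z_x|/w_t)^\delta w_t^{-n_2/2}$ on $\text{supp}\,\psi_w$, with the constant tail $-\psi_w(z_x)$ outside $\text{supp}\,\psi_w$ absorbed by the kernel decay. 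Collecting the branches gives the desired bound (at worst up to a logarithm in $w_t/z_t$, absorbed by replacing $\delta$ with any smaller exponent), and with it the lemma. The delicate point is thus the near-diagonal decomposition: it must be arranged so that the non-decaying ``constant'' contributions cancel through the top-level cancellation of $T$ while each genuine remainder decays at the sharp rate; the far branches and the reductions are routine Calderón--Zygmund estimates.
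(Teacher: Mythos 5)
Your proposal is correct and follows the paper's strategy through all of its reductions: both arguments pass from $\varpi_{(z,w)}=\pi_{\langle T(1\otimes\psi_z),\cdot\otimes\psi_w\rangle}$ to a $\text{BMO}(\R^{n_1})$ bound on the symbol via the classical paraproduct estimate, and then use $H^1$--$\text{BMO}$ duality to reduce to the uniform bound $|\langle T(1\otimes\psi_z),a\otimes\psi_w\rangle|\lesssim L(z^{-1}w)$ over atoms $a$. The only real divergence is in the last step. The paper observes that, for each fixed $a$ with $\|a\|_{H^1(\R^{n_1})}\leq 1$, the bilinear form $\langle T(1\otimes\cdot),a\otimes\cdot\rangle$ defines a \emph{cancellative one-parameter} $\delta$-CZO on $\R^{n_2}$ with $\delta\text{CZO}(\R^{n_2})$ norm uniformly controlled (its kernel is $\langle K_2(x_2,y_2)1,a\rangle$, bounded by the $L^\infty\to\text{BMO}$ estimate exactly as you use it, and the two top-level cancellation conditions give $S_a1=S_a^t1=0$), and then simply invokes the known one-parameter wavelet-coefficient decay (Proposition \ref{OneParameterAlmostDiag}, i.e., \cite{MS2023}*{Lemma 4.1}) to get $L(z^{-1}w)$ at once. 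Your near/far decomposition --- vanishing moments plus kernel smoothness for the separated branches, and the splitting $\psi_w=\psi_w(z_x)\cdot 1+(\psi_w-\psi_w(z_x))\rho+(\psi_w-\psi_w(z_x))(1-\rho)$ with the constant piece killed by $\langle T(1\otimes\cdot),\cdot\otimes1\rangle=0$ --- is essentially a by-hand reproof of that cited lemma for this particular operator, using the same ingredients in the same roles. What the paper's packaging buys is brevity and the elimination of the technical points you have to hedge on: the interpretation of pairings against non-$L^2$, non-compactly-supported functions and the possible logarithmic loss (the cited lemma gives the clean exponent $\delta$, so no replacement by a smaller exponent is needed); what your version buys is self-containedness. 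Either way the stated estimate follows, so there is no gap, only a longer route.
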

    \begin{proof}
        We first claim that
        \begin{align*}
            \|\langle T(1 \otimes \psi_z), \cdot \otimes \psi_w \rangle \|_{\text{BMO}(\R^{n_1})} \lesssim L(z^{-1} w)
        \end{align*}
        for all $z,w \in \R_+^{n_2+1}$. Indeed, by $H^1$-BMO duality, we have 
        $$
            \|\langle T(1 \otimes \psi_z), \cdot \otimes \psi_w \rangle \|_{\text{BMO}(\R^{n_1})} \approx \sup_{\|a\|_{H^1(\R^{n_1})} \leq 1} |\langle T(1 \otimes \psi_z), a \otimes \psi_w \rangle|.
        $$
        Since $T$ is top-level cancellative, we have that $\langle T(1 \otimes \cdot ), a \otimes \cdot \rangle$ is a cancellative one-parameter $\delta$-CZO with $\delta\text{CZO}(\mathbb{R}^{n_1})$ norm controlled by $\|a\|_{H^1(\R^{n_1})}$ for each $a\in H^1(\R^{n_1}$), and so the claim follows from \cite{MS2023}*{Lemma 4.1}. We use the relation $\varpi_{(z,w)} = \pi_{\langle T(1 \otimes \psi_z), \cdot \otimes \psi_w \rangle}$ and properties of the one-parameter paraproduct to conclude that
        \[
            \|\varpi_{(z,w)}\|_{L^2(\R^{n_1}) \rightarrow L^2(\R^{n_1})} \lesssim \|\langle T(1 \otimes \psi_z), \cdot \otimes \psi_w \rangle\|_{\text{BMO}(\R^{n_1})} \lesssim L(z^{-1}w).
        \]
    \end{proof}

    The next result asserts that the partial paraproducts are partially localized. In particular, we apply the abstract theory of Section \ref{AbstractSection} with $H_1 = L^2(\R^{n_1})$, $H_2 = L^2(\R^{n_2})$, $G = \R_+^{n_2+1}$, $\psi_z = \psi_z$, and $E_N = D((0,1),N)$. Note that $L^2(\R^{n_1}\times\R^{n_2}) \cong L^2(\R^{n_1})\Ptens L^2(\R^{n_2})$.

    \begin{cor}\label{VarpiPartiallyWeaklyLocalized}
        If $T$ is a top-level cancellative bi-parameter CZO, then the associated partial paraproduct $\varpi$ is partially localized on $L^2(\R^{n_1}\times\R^{n_2})$ with respect to the frame $\{\psi_z\}_{z \in \R^{n_2+1}_+}$.
    \end{cor}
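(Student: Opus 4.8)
The plan is to obtain the partial localization of $\varpi = \varpi_{1,T}$ from the abstract sufficient condition of Lemma \ref{MatrixBound>Localized}, applied with $H_1 = L^2(\R^{n_1})$, $H_2 = L^2(\R^{n_2})$, the group $G = \R^{n_2+1}_+$ equipped with its left Haar measure $d\lambda(z) = t^{-(n_2+1)}\,dx\,dt$, and the continuous Parseval frame $\{\psi_z\}_{z \in \R^{n_2+1}_+}$ of $L^2(\R^{n_2})$ from Section \ref{PreliminariesSection}. Under the identification $L^2(\R^{n_1}\times\R^{n_2}) \cong L^2(\R^{n_1}) \Ptens L^2(\R^{n_2})$, the operator $\varpi$ is a bounded linear operator on this space because it is a bi-parameter $\delta$-CZO, and its restricted operators are precisely the one-parameter paraproducts $\varpi_{(z,w)} = \pi_{\langle T(1\otimes\psi_z),\,\cdot\,\otimes\psi_w\rangle}$ on $L^2(\R^{n_1})$. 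Lemma \ref{VarpiCoefficientsEstimate} already provides the matrix bound $\|\varpi_{(z,w)}\|_{L^2(\R^{n_1}) \to L^2(\R^{n_1})} \lesssim L(z^{-1}w)$ for the explicit function $L$ in its statement, so it remains only to check, for a suitable constant multiple of $L$, the two structural hypotheses of Lemma \ref{MatrixBound>Localized}: the inversion symmetry $L(\cdot^{-1}) = L(\cdot)$ on $G$, and the existence of a continuous submultiplicative weight $\omega\colon G \to \R_+$ with $L \in L^1(G, \omega(z)\,d\lambda(z))$.

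The inversion symmetry is a short case check. Writing $z = (x,t) \in \R^{n_2+1}_+$ so that $z^{-1} = (-x/t,\,1/t)$, one verifies that each of the four defining regions of $L$ is mapped by $z \mapsto z^{-1}$ onto one of the four regions, with the corresponding formula preserved; for instance $\{t \ge 1,\,|x| > t\}$ maps to $\{1/t < 1,\,|x|/t > 1\}$, where $L(z^{-1}) = (1/t)^{n_2/2+\delta}/(|x|/t)^{n_2+\delta} = t^{n_2/2}/|x|^{n_2+\delta} = L(z)$, and the remaining three cases are analogous.

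The substantive point is the choice of $\omega$. The naive choice $\omega \equiv 1$ fails: near the cusp $t \to 0$ with $|x|$ bounded one has $L((x,t)) \approx t^{n_2/2+\delta}$, and against the Haar density $t^{-(n_2+1)}$ this produces $\int_0^1 t^{\delta - 1 - n_2/2}\,dt$, which diverges whenever $\delta \le n_2/2$. The remedy is to take a weight that decays as $t \to 0$, and the right choice is $\omega((x,t)) := t^{n_2/2}$. This is continuous, strictly positive, and is in fact a homomorphism of $G$ onto $\R_+$ --- since the $t$-coordinate of a group product is the product of the $t$-coordinates, $\omega((x,t)\ast(y,s)) = (ts)^{n_2/2} = \omega((x,t))\,\omega((y,s))$ --- hence submultiplicative. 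To verify $L \in L^1(G,\omega\,d\lambda)$, I would split $G$ into the four regions defining $L$ and integrate $L((x,t))\,t^{n_2/2}\,t^{-(n_2+1)}\,dx\,dt$: in each region the $x$-integral is elementary, being a power of $t$ or a constant on $\{|x| \le t\}$ or $\{|x| \le 1\}$, and convergent on $\{|x| > t\}$ or $\{|x| > 1\}$ because $|x|^{-(n_2+\delta)}$ is integrable at infinity, and after performing it the remaining $t$-integral has exponent $-\delta - 1 < -1$ on $\{t \ge 1\}$ and $\delta - 1 > -1$ on $\{t < 1\}$, so all four pieces are finite. (More generally, any $\omega((x,t)) = t^{\gamma}$ with $\gamma \in (n_2/2 - \delta,\ n_2/2 + \delta)$ works.) With both hypotheses verified, Lemma \ref{MatrixBound>Localized} gives that $\varpi$ is partially localized; the assertion for $\varpi_{2,T}$ follows by interchanging the two factors and repeating the argument with $n_1$ and $n_2$ swapped.

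The one place where care is needed is the choice of weight: one must notice that $L$ is typically not Haar-integrable near $t \to 0$, and recognize that this is harmless for Lemma \ref{MatrixBound>Localized} precisely because the character $t \mapsto t^{n_2/2}$ is a genuine homomorphism of the $ax+b$ group, hence an admissible submultiplicative weight, even though it decays in one direction of $G$. Everything else --- the four-case inversion check and the four-region integral estimate --- is routine.
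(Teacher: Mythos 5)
Your proposal is correct and follows essentially the same route as the paper: the matrix bound of Lemma \ref{VarpiCoefficientsEstimate} is fed into Lemma \ref{MatrixBound>Localized} with the weight $\omega((x,t)) = t^{n_2/2}$, for which $\int_{\R^{n_2+1}_+} L(z)\,\omega(z)\,dz < \infty$. Your additional verifications (the inversion symmetry $L(\cdot^{-1})=L(\cdot)$, submultiplicativity of $\omega$, and the four-region integrability check showing why $\omega\equiv 1$ would fail) are exactly the details the paper leaves implicit.
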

    \begin{proof}
        The result follows from Lemma \ref{VarpiCoefficientsEstimate} and Lemma \ref{MatrixBound>Localized}, since  
        $$
            \int_{\R^{n_2+1}_+} L(z) \omega(z) \, dz< \infty
        $$
        where $\omega(x,t) = t^{n_2/2}$.
    \end{proof}

    \begin{thm}\label{partialparaproductcompactness}
        If $T$ is a top-level cancellative bi-parameter CZO satisfying the mixed weak compactness/CMO condition, then $\varpi$ is compact on $L^2(\mathbb{R}^{n_1}\times\mathbb{R}^{n_2})$.  
    \end{thm}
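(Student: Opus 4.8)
The plan is to obtain the compactness of $\varpi$ as an immediate consequence of the abstract criterion in Theorem \ref{Abstractbi-parameterCompactnessTheorem}, applied in the concrete setting $H_1 = L^2(\R^{n_1})$, $H_2 = L^2(\R^{n_2})$, $G = \R_+^{n_2+1}$, frame $\{\psi_z\}_{z \in \R_+^{n_2+1}}$, and exhaustion $E_N = D((0,1),N)$, exactly as in the discussion preceding Corollary \ref{VarpiPartiallyWeaklyLocalized}. That corollary already supplies the partial localization of $\varpi$ with respect to this data, so the entire task reduces to verifying that $\varpi$ is weakly compact in the sense of Definition \ref{WeakCptDefinition}, and then invoking Theorem \ref{Abstractbi-parameterCompactnessTheorem}.

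For the first half of weak compactness, I would use the identity $\varpi_{(z,w)} = \pi_{b_{z,w}}$ with $b_{z,w} := \langle T(1 \otimes \psi_z), \cdot \otimes \psi_w \rangle$, and note that the mixed weak compactness/CMO property gives $b_{z,w} \in \text{CMO}(\R^{n_1})$ for every $z,w \in \R_+^{n_2+1}$. A one-parameter paraproduct with $\text{CMO}$ symbol is compact on $L^2(\R^{n_1})$: this follows from Theorem B, or, as in the proof of Theorem \ref{TopLevelParaproductCompactness}, from the compactness of $\pi_b$ for $b \in \DD(\R^{n_1})$, the continuity of $b \mapsto \pi_b$ from $\text{BMO}(\R^{n_1})$ into the bounded operators on $L^2(\R^{n_1})$, and the density of $\DD(\R^{n_1})$ in $\text{CMO}(\R^{n_1})$. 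Hence each $\varpi_{(z,w)}$ is compact on $L^2(\R^{n_1})$.

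For the second half, I would combine the standard bound $\|\varpi_{(z,w)}\|_{L^2(\R^{n_1}) \to L^2(\R^{n_1})} = \|\pi_{b_{z,w}}\|_{L^2(\R^{n_1}) \to L^2(\R^{n_1})} \lesssim \|b_{z,w}\|_{\text{BMO}(\R^{n_1})}$ with the identification $z E_M = D(z,M)$, which holds because the hyperbolic metric on $\R_+^{n_2+1}$ is left-invariant and $E_M = D((0,1),M)$. The second condition of the mixed weak compactness/CMO property then yields
\[
    \lim_{z \to \infty} \sup_{w \in zE_M} \|\varpi_{(z,w)}\|_{L^2(\R^{n_1}) \to L^2(\R^{n_1})} \lesssim \lim_{z \to \infty} \sup_{w \in D(z,M)} \|b_{z,w}\|_{\text{BMO}(\R^{n_1})} = 0
\]
for every $M > 0$, so $\varpi$ is weakly compact, and Theorem \ref{Abstractbi-parameterCompactnessTheorem} gives that $\varpi$ is compact on $L^2(\R^{n_1} \times \R^{n_2})$.

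I do not anticipate a genuine obstacle here: once the abstract machinery and the partial localization of Corollary \ref{VarpiPartiallyWeaklyLocalized} are available, the argument is an assembly of the mixed weak compactness/CMO hypothesis, the estimate $\|\pi_b\|_{L^2 \to L^2} \lesssim \|b\|_{\text{BMO}}$, and the compactness of one-parameter paraproducts. The only point that warrants a line of care is upgrading the compactness of $\pi_b$ from $b \in \DD(\R^{n_1})$ to $b \in \text{CMO}(\R^{n_1})$, which I would dispatch by the density/continuity argument above, just as in the proof of Theorem \ref{TopLevelParaproductCompactness}; and keeping the bookkeeping straight between the group-theoretic notation $zE_M$ of Section \ref{AbstractSection} and the hyperbolic balls $D(z,M)$ appearing in the mixed weak compactness/CMO property.
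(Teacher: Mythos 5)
Your proposal is correct and follows essentially the same route as the paper: partial localization from Corollary \ref{VarpiPartiallyWeaklyLocalized}, verification of weak compactness via $\varpi_{(z,w)} = \pi_{\langle T(1 \otimes \psi_z), \cdot \otimes \psi_w \rangle}$ using the CMO membership and the vanishing BMO bound from the mixed weak compactness/CMO property, and then Theorem \ref{Abstractbi-parameterCompactnessTheorem}. The paper merely states the compactness of paraproducts with CMO symbols without the density argument you spell out, so your write-up is, if anything, slightly more detailed.
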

    \begin{proof}
        Since $\varpi$ is partially localized by Corollary \ref{VarpiPartiallyWeaklyLocalized}, it suffices to show that $\varpi$ is weakly compact and apply Theorem \ref{Abstractbi-parameterCompactnessTheorem}. Recall that $\varpi_{(z,w)} = \pi_{\langle T(1 \otimes \psi_z), \cdot \otimes \psi_w \rangle}$, and that for each $w$ and $z$ this symbol is assumed to be in $\text{CMO}(\R^{n_1})$, so each $\varpi_{(z,w)}$ is compact. Also, 
        \[
            \|\varpi_{(z,w)}\|_{L^2(\R^{n_1}) \rightarrow L^2(\R^{n_1})} = \|\pi_{\langle T(1 \otimes \psi_z), \cdot \otimes \psi_w \rangle}\|_{L^2(\R^{n_1}) \rightarrow L^2(\R^{n_1})} \lesssim \|\langle T(1 \otimes \psi_z),\cdot \otimes \psi_w \rangle\|_{\text{BMO}(\R^{n_1})},
        \]
        so the vanishing assumption on $\langle T(1 \otimes \psi_z),\cdot \otimes \psi_w \rangle$ gives the rest of weak compactness of $\varpi$.\looseness=-1
    \end{proof}

%%%%%%%%%%%%%%%%%%%%%%%%%%%%%%%%%%%%%%%%%%%%%%%%%%%%%%%%%%%%%%%%%%%%%%%%%%%

\section{Compactness of fully cancellative bi-parameter SIOs}\label{FullyCancellativeSection}
    
    In this section, we apply the abstract theory of Section \ref{AbstractSection} in the case $H_1 = \C$, $H_2 = L^2(\R^{n_1} \times \R^{n_2})$, $G = \R_+^{n_1+1} \times \R_+^{n_2+1}$, $\psi_z = \psi_{z_1} \otimes \psi_{z_2}$, and $E_N = D((0,1),N) \times D((0,1),N)$. Note that $\psi_z = \mathfrak{U}(z) \psi :=  U(z_1) \otimes U(z_2) \psi$, where $U(x_i,t_i)$ is the unitary operator on $L^2(\R^{n_i})$ given by $U(x_i,t_i)f = t_i^{-n_i/2} f \big( \frac{\cdot - x_i}{t_i} \big)$. Also note that $\mathfrak{U}(z)^* = \mathfrak{U}(z)^t = \mathfrak{U}(z^{-1})$.

    \begin{defin}
    A bi-parameter CZO $T$ is fully cancellative if it is top-level cancellative and 
    \[
        \langle T(1 \otimes \cdot ) , \cdot \otimes \cdot \rangle =\langle T(\cdot \otimes 1 ) , \cdot \otimes \cdot \rangle = \langle T(\cdot \otimes \cdot ) , 1 \otimes \cdot \rangle = \langle T(\cdot \otimes \cdot ) , \cdot \otimes 1 \rangle = 0.
    \]
    \end{defin}
    \noindent We observe that this property is invariant under conjugation by $\mathfrak{U}(z)$ and that the kernels of fully cancellative operators are cancellative in the one-parameter sense.

    \begin{prop}\label{CancellationInvariance}
        If $T$ is a fully cancellative bi-parameter $\delta$-CZO and $w \in G$, then \\$\mathfrak{U}(w)^t T \mathfrak{U}(w)$ and all partial transposes of $T$ are also fully cancellative bi-parameter $\delta$-CZOs with equal $\delta\text{CZO}(\mathbb{R}^{n_1}\times\mathbb{R}^{n_2})$ norms. Further, both kernels of $T$ are pointwise cancellative one-parameter $\delta$-CZOs.\looseness=-1
    \end{prop}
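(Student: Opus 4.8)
The plan is to verify the three assertions separately, each by checking that the operation in question --- conjugation by $\mathfrak{U}(w)$, or passage to a full or partial transpose --- leaves invariant every ingredient in the definition of a fully cancellative bi-parameter $\delta$-CZO. Beginning with conjugation, write $\mathfrak{U}(w) = U(w_1)\otimes U(w_2)$ and recall that each $U(w_i)$ is a composition of a translation and a dilation, hence a unitary operator on $L^2(\R^{n_i})$ with $U(w_i)^{t} = U(w_i)^{-1}$ that maps $\DD(\R^{n_i})$ onto itself, sends the constant function to a nonzero constant multiple of itself, sends mean-zero functions to mean-zero functions, and preserves disjointness of supports. Put $S := \mathfrak{U}(w)^{t}T\mathfrak{U}(w)$. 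A change of variables in $\R^{n_1}$ identifies the first kernel of $S$ with the one obtained from $K_1$ by the affine change of variables in $\R^{n_1}$ induced by $U(w_1)$ and conjugation of the operator values by $U(w_2)$, and symmetrically for the second kernel; since conjugating a one-parameter $\delta$-CZO by a translation--dilation unitary preserves its $\delta\text{CZO}$ norm exactly, and an affine change of variables preserves $\delta$-CZ kernel constants exactly, the kernel part of $\|S\|_{\delta\text{CZO}(\R^{n_1}\times\R^{n_2})}$ equals that of $T$. Moreover $\|S\|_{L^2\to L^2} = \|T\|_{L^2\to L^2}$ because $\mathfrak{U}(w)$ is $L^2$-unitary, and a direct pairing computation gives $S_t = \mathfrak{U}(w)^{t}T_t\mathfrak{U}(w)$, so $\|S_t\|_{L^2\to L^2} = \|T_t\|_{L^2\to L^2}$; hence $S$ is a bi-parameter $\delta$-CZO with $\|S\|_{\delta\text{CZO}(\R^{n_1}\times\R^{n_2})} = \|T\|_{\delta\text{CZO}(\R^{n_1}\times\R^{n_2})}$. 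Finally, each defining identity of full cancellation for $S$, say $\langle S(1\otimes\cdot),\cdot\otimes\cdot\rangle = 0$, follows by moving the two copies of $\mathfrak{U}(w)$ onto the arguments and test functions --- using that $U(w_i)$ fixes $\DD(\R^{n_i})$, scales the constant function, and preserves mean-zero-ness --- and then invoking the corresponding identity for $T$.

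For the transposes, the quantity $\|T\|_{\delta\text{CZO}(\R^{n_1}\times\R^{n_2})}$ is a symmetric expression in $\|T\|_{L^2\to L^2}$, $\|T_t\|_{L^2\to L^2}$, and the two kernel constants: passing to $T^{t}$, $T_t$, or $T_t^{t}$ merely permutes the two $L^2$ operator-norm terms (using $\|A\|_{L^2\to L^2} = \|A^{t}\|_{L^2\to L^2}$ and the identity $(T^{t})_t = (T_t)^{t}$) and replaces $K_1,K_2$ by kernels obtained by interchanging the two spatial variables and/or taking one-parameter transposes of the operator values, both of which preserve $\delta\text{CZO}(\R^{n_j})$-valued $\delta$-CZ kernel constants exactly. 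Thus every partial transpose is again a bi-parameter $\delta$-CZO with the same norm. Similarly, the eight identities defining full cancellation (the four top-level ones and the four additional ones) are carried bijectively onto one another by $T\mapsto T^{t}$ and $T\mapsto T_t$; for instance $\langle T_t(\cdot\otimes\cdot),\cdot\otimes 1\rangle = \langle T(\cdot\otimes\cdot),\cdot\otimes 1\rangle$ and $\langle T^{t}(1\otimes\cdot),\cdot\otimes\cdot\rangle = \langle T(\cdot\otimes\cdot),1\otimes\cdot\rangle$. Hence full cancellation is preserved by every full and partial transpose.

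For the pointwise cancellation of the kernels, it is enough to show $K_1(x,y)1 = K_1(x,y)^{t}1 = 0$ for all $x\neq y$, the claim for $K_2$ being symmetric with the two variables interchanged. Fix $f_1,g_1\in\DD(\R^{n_1})$ with disjoint supports. Then $T_{f_1,g_1} := \iint_{\R^{2n_1}} f_1(y)g_1(x)K_1(x,y)\,dx\,dy$, the integral converging in $\delta\text{CZO}(\R^{n_2})$ since $\|K_1(x,y)\|\lesssim|x-y|^{-n_1}$ and the integration avoids the diagonal, is a one-parameter $\delta$-CZO, and for every mean-zero $\phi_2\in\DD(\R^{n_2})$ one has $\langle T_{f_1,g_1}1,\phi_2\rangle = \langle T(f_1\otimes 1),g_1\otimes\phi_2\rangle$ and $\langle T_{f_1,g_1}^{t}1,\phi_2\rangle = \langle T(f_1\otimes\phi_2),g_1\otimes 1\rangle$. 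By the identities $\langle T(\cdot\otimes 1),\cdot\otimes\cdot\rangle = 0$ and $\langle T(\cdot\otimes\cdot),\cdot\otimes 1\rangle = 0$ from full cancellation, both right-hand sides vanish, so $\iint f_1(y)g_1(x)\langle K_1(x,y)1,\phi_2\rangle\,dx\,dy = \iint f_1(y)g_1(x)\langle K_1(x,y)^{t}1,\phi_2\rangle\,dx\,dy = 0$ for all disjointly supported $f_1,g_1$. Since $(x,y)\mapsto K_1(x,y)$ is operator-norm continuous off the diagonal by \eqref{kernelsmoothness}, the scalar functions $(x,y)\mapsto\langle K_1(x,y)1,\phi_2\rangle$ and $(x,y)\mapsto\langle K_1(x,y)^{t}1,\phi_2\rangle$ are continuous off the diagonal and therefore vanish identically there; as $\phi_2$ ranges over all mean-zero bumps, $K_1(x,y)1 = K_1(x,y)^{t}1 = 0$ in $\text{BMO}(\R^{n_2})$ for all $x\neq y$.

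The computations themselves are routine changes of variables and rearrangements of pairings; the points that need care are the precise interpretation of the cancellation identities when a slot carries the non-integrable constant function $1$ --- resolved by the mean-zero partner in the conjugate slot, which also guarantees convergence of the relevant integrals since a one-parameter $\delta$-CZO applied to a mean-zero bump lies in $L^1$ --- and the passage from the vanishing of $\iint f_1g_1\langle\cdots\rangle$ over all disjointly supported test pairs to pointwise vanishing of the kernel, which relies on the off-diagonal operator-norm continuity of $(x,y)\mapsto K_i(x,y)$.
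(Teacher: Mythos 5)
Your proposal is correct and takes essentially the same route as the paper: the conjugation and transpose invariances are treated there as straightforward bookkeeping, and the kernel cancellation is proved exactly as you do, by pairing a full-cancellation identity against disjointly supported bumps in one factor and a mean-zero/$H^1$ partner in the other, identifying the pairing with the kernel integral, and using off-diagonal norm continuity of the operator-valued kernel to upgrade the integral identity to pointwise vanishing. The only cosmetic difference is that the paper tests $K_2(x,y)1$ against general $a\in H^1(\R^{n_1})$ whereas you test $K_1(x,y)1$ and $K_1(x,y)^t1$ against mean-zero bumps (dense in $H^1$), with the remaining cases handled symmetrically in both arguments.
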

    \begin{proof}
        The first statement is straightforward. For the second, we let $a \in H^1(\R^{n_1})$ and $f,g \in L^2(\R^{n_2})$ have disjoint compact supports. Then
        \[
            0 = \langle T(1 \otimes f), a \otimes g \rangle = \iint_{\R^{2n_2}} f(y) g(x) \langle K_2(x,y) 1, a \rangle \, dxdy.
        \]
        Since $f$ and $g$ were arbitrary, we have that $\langle K_2(x,y) 1 , a \rangle = 0$ almost everywhere and, by the continuity of $K_2$, everywhere. That is, $K_2(x,y)1 = 0$ for every $x,y \in \R^{n_2}$ with $x\neq y$. The remaining cancellation properties of the kernels follow from symmetric arguments.
    \end{proof}

    Before establishing the partial localization of the fully cancellative operators, let us recall the analogous one-parameter estimate from, e.g., \cite{MS2023}*{Lemma 4.1}.

    \begin{prop}\label{OneParameterAlmostDiag}
        If $T$ is a $\delta$-CZO satisfying $T1=T^t1=0$ and $(x,t) \in \mathbb{R}^{n+1}_+$, then %for any $(a,b) \in \mathbb{R}^{n+1}_+$, we have
$$
    |\langle T\psi,\psi_{(x,t)}\rangle|\lesssim
    \|T\|_{\delta \text{CZO}(\R^n)}\begin{cases}
    \frac{1}{t^{n/2+\delta}}&t\ge 1, \,\, |x|\leq t\\
    \frac{t^{n/2}}{|x|^{n+\delta}} & t\ge 1,\,\,|x|> t\\
    t^{n/2+\delta} & t< 1, \,\, |x|\leq 1\\
    \frac{t^{n/2+\delta}}{|x|^{n+\delta}} & t< 1, \,\, |x|> 1
    \end{cases}.
$$
    \end{prop}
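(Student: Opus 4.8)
Here is how I would go about establishing Proposition~\ref{OneParameterAlmostDiag}.

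The plan is to control the matrix coefficient $\langle T\psi,\psi_{(x,t)}\rangle$ by a case analysis governed by the positions of the supports $\operatorname{supp}\psi\subseteq B(0,1)$ and $\operatorname{supp}\psi_{(x,t)}\subseteq B(x,t)$ and by the ratio of scales, and the cancellation hypotheses $T1=T^t1=0$ will be used only in the regime where the scales are very different but the supports overlap. First, since $\psi$ is real-valued, $\langle T\psi,\psi_{(x,t)}\rangle=\langle T^t\psi_{(x,t)},\psi\rangle$, and conjugating $T^t$ by the $L^2(\R^n)$-isometric map $f\mapsto f_{(x,t)}$ produces another $\delta$-CZO with the same $\delta\text{CZO}(\R^n)$ norm that still annihilates constants together with its transpose; keeping track of how the four target expressions transform under $(x,t)\mapsto(x,t)^{-1}$, this reduces the case $t\ge 1$ to the case $t\le 1$ (alternatively one argues the two regimes symmetrically). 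So I would assume $t\le 1$ from here on.

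Next I would dispose of the two easy regimes. If $|x|\ge 4$, the balls $B(0,1)$ and $B(x,t)$ are disjoint with distance $\gtrsim |x|$, so $\langle T\psi,\psi_{(x,t)}\rangle$ equals the absolutely convergent kernel integral; inserting $\int\psi_{(x,t)}=0$ to pass to a kernel difference and applying \eqref{kernelsmoothness} together with $\|\psi_{(x,t)}\|_{L^1(\R^n)}\approx t^{n/2}$ gives $|\langle T\psi,\psi_{(x,t)}\rangle|\lesssim\|K\|_\delta\, t^{n/2+\delta}|x|^{-n-\delta}$, which is the stated bound in this regime. If $|x|<4$ and $t\in(1/4,1]$, the two scales are comparable and one simply uses $L^2$-boundedness, $|\langle T\psi,\psi_{(x,t)}\rangle|\le\|T\|_{L^2(\R^n)\to L^2(\R^n)}$, which is comparable to the stated bound throughout this range.

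The main case is $|x|<4$ and $t\le 1/4$, where the target is $\approx t^{n/2+\delta}$. Here I would fix a large ball $B=B(0,10)$ containing $\operatorname{supp}\psi$ and $B(x,8t)$, and write $\psi=g_0+g_1+\psi(x)1_B$, where $g_0:=(\psi-\psi(x))1_{B(x,8t)}$ and $g_1:=(\psi-\psi(x))1_{B\setminus B(x,8t)}$, all three summands lying in $L^2(\R^n)$. Using $T1=0$ to rewrite $\psi(x)\langle T1_B,\psi_{(x,t)}\rangle=-\psi(x)\langle T1_{B^c},\psi_{(x,t)}\rangle=\langle Tg_2,\psi_{(x,t)}\rangle$ with $g_2:=-\psi(x)1_{B^c}$, one gets $\langle T\psi,\psi_{(x,t)}\rangle=\langle Tg_0,\psi_{(x,t)}\rangle+\langle Tg_1,\psi_{(x,t)}\rangle+\langle Tg_2,\psi_{(x,t)}\rangle$, and I would bound each term by $\|T\|_{\delta\text{CZO}(\R^n)}\,t^{n/2+\delta}$ as follows. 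Smoothness of $\psi$ gives $\|g_0\|_{L^2(\R^n)}\lesssim t\cdot t^{n/2}\le t^{n/2+\delta}$, so $L^2$-boundedness handles the $g_0$ term. The supports of $g_1$ and $\psi_{(x,t)}$ are disjoint with distance $\gtrsim t$, so the kernel representation applies, and $\int\psi_{(x,t)}=0$, \eqref{kernelsmoothness}, and $|g_1(v)|\lesssim|v-x|$ yield $|\langle Tg_1,\psi_{(x,t)}\rangle|\lesssim\|K\|_\delta\,t^\delta\|\psi_{(x,t)}\|_{L^1(\R^n)}\int_{8t\le|v-x|\lesssim 1}|v-x|^{1-n-\delta}\,dv\lesssim\|K\|_\delta\,t^{n/2+\delta}$. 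Finally $\operatorname{supp}g_2\subseteq B^c$ is at distance $\gtrsim 1$ from $\operatorname{supp}\psi_{(x,t)}$, so on $B^c$ the function $T^t\psi_{(x,t)}$ coincides with the kernel integral, which by $\int\psi_{(x,t)}=0$ decays like $\|K\|_\delta\,t^{n/2+\delta}|v-x|^{-n-\delta}$, and pairing this with $g_2\in L^\infty(\R^n)$ supported on $B^c$ gives $|\langle Tg_2,\psi_{(x,t)}\rangle|\lesssim\|K\|_\delta\,t^{n/2+\delta}$. Summing, $|\langle T\psi,\psi_{(x,t)}\rangle|\lesssim\|T\|_{\delta\text{CZO}(\R^n)}\,t^{n/2+\delta}$, and since the three regimes above exhaust all $(x,t)$ with $t\le 1$ and in each case the bound matches the corresponding line of the statement, the proof is complete.

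The hard part is precisely this disparate-scales case: splitting $\psi$ naively into an $L^2$-small bump near $B(x,t)$ plus a piece supported away from $B(x,t)$ yields only the weaker bound $t^{n/2}$ (which is sharp for top-level paraproducts, where $T1\ne0$), and recovering the extra factor $t^\delta$ forces one to use $T1=0$ to trade the portion of $\psi$ sitting over $B(x,t)$ ``at the wrong scale'' for a function supported at distance $\gtrsim1$ from $B(x,t)$; this is the reason the a priori disjoint-support kernel representation must be invoked to make sense of $\langle T1_{B^c},\psi_{(x,t)}\rangle$ with the non-integrable tail $1_{B^c}$. A minor technicality is a logarithmic loss at the endpoint $\delta=1$ in the $g_1$ estimate, which is harmless since a $\delta$-CZ kernel is also a $\delta'$-CZ kernel for every $\delta'<\delta$ with comparable constant, so one runs the argument with such a $\delta'$.
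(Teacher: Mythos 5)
Your argument is correct in substance, and it is worth pointing out that the paper itself does not prove Proposition \ref{OneParameterAlmostDiag} at all: it is recalled from \cite{MS2023}*{Lemma 4.1}. What you have written is exactly the standard argument that such a lemma rests on: the transpose/dilation symmetry reducing $t\ge 1$ to $t\le 1$ (and your bookkeeping of how the four bounds transform under $(x,t)\mapsto(x,t)^{-1}$ is right), kernel smoothness \eqref{kernelsmoothness} played against the cancellation of the \emph{finer} bump $\psi_{(x,t)}$ when the supports are separated, the trivial $L^2$ bound at comparable scales, and $T1=0$ via the splitting $1=1_B+1_{B^c}$ together with the off-support kernel representation in the nested, disparate-scale regime. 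The individual estimates check out: $\|g_0\|_{L^2(\R^n)}\lesssim t^{1+n/2}\le t^{n/2+\delta}$, the $g_1$ and $g_2$ terms via $\int\psi_{(x,t)}=0$, and each regime's bound matches the corresponding line of the statement; your handling of the meaning of $\langle T1_{B^c},\psi_{(x,t)}\rangle$ is at the usual level of rigor for this kind of lemma.

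The one caveat is the endpoint $\delta=1$, which you flag but do not fully resolve: running the argument with $\delta'<\delta$ yields $t^{n/2+\delta'}$ in the nested regime, not the claimed $t^{n/2+1}$, so strictly speaking your proof gives the statement only for $\delta\in(0,1)$ (equivalently, with any exponent $\delta'<\delta$). This appears to be a defect of the literal statement rather than of your argument: with only one vanishing moment on $\psi$ the logarithm in your $g_1$ estimate seems genuine. For instance, $T=\sum_{j\ge 0}\langle\cdot,\varphi_j\rangle h_j$ with mean-zero, $L^2$-normalized smooth bumps $\varphi_j,h_j$ at scale $2^{-j}$ centered at the origin is a $1$-CZO with $T1=T^t1=0$ and norm $\lesssim 1$, and if $\nabla\psi(0)\cdot\int y\,\varphi(y)\,dy\neq 0$ and $\nabla h(0)\cdot\int y\,\psi(y)\,dy\neq 0$ (both available for a generic admissible $\psi$, which need not have vanishing first moment), then $|\langle T\psi,\psi_{(0,t)}\rangle|\gtrsim t^{n/2+1}\log(1/t)$. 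So the proposition at $\delta=1$ should be read with an arbitrary $\delta'<1$ (or a logarithmic factor), exactly as you propose; this is harmless for the paper, since the proposition only enters through Lemmas \ref{VarpiCoefficientsEstimate} and \ref{FullyCancellativeCoefficientEstimate} and the integrability hypothesis of Lemma \ref{MatrixBound>Localized}, which holds for every positive exponent.
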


    \begin{lemma}\label{FullyCancellativeCoefficientEstimate}
    If $T$ is a fully cancellative bi-parameter $\delta$-CZO, then
    \[
        \left| \langle T \psi, \psi_z \rangle \right| \lesssim \|T\|_{\delta \text{CZO}(\R^{n_1} \times \R^{n_2})} L(z),
    \]
    where $L(z) = L_1(z_1) L_2(z_2)$ for $z = (z_1,z_2)\in \mathbb{R}^{n_1+1}_+\times\mathbb{R}^{n_2+1}_+$ and
    \[
        L_i(x,t) = \begin{cases}
            \frac{1}{t^{n_i/2 + \delta}} & t \geq 1, |x| \leq t\\
            \frac{t^{n_i/2}}{|x|^{n_i+\delta}} & t \geq 1, |x| > t\\
            t^{n_i / 2 + \delta} & t < 1, |x| \leq 1\\
            \frac{t^{n_i / 2 + \delta}}{|x|^{n_i + \delta}} & t < 1, |x|  > 1
        \end{cases}.
    \]
    \end{lemma}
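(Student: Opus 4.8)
The plan is to iterate the one-parameter almost-diagonal estimate of Proposition \ref{OneParameterAlmostDiag} twice, peeling off one variable at a time. Write $z=(z_1,z_2)$, so that by definition $\langle T\psi,\psi_z\rangle=\langle T(\psi\otimes\psi),\psi_{z_1}\otimes\psi_{z_2}\rangle$. The natural move is to freeze the second slot and consider the operator $S$ on $L^2(\R^{n_1})$ given by
$$
\langle Sf_1,g_1\rangle:=\langle T(f_1\otimes\psi),g_1\otimes\psi_{z_2}\rangle,\qquad f_1,g_1\in\DD(\R^{n_1}).
$$
Everything reduces to showing that $S$ is a cancellative $\delta$-CZO with
$$
\|S\|_{\delta\text{CZO}(\R^{n_1})}\lesssim\|T\|_{\delta\text{CZO}(\R^{n_1}\times\R^{n_2})}\,L_2(z_2),
$$
because then Proposition \ref{OneParameterAlmostDiag} applied to $S$ gives $|\langle T\psi,\psi_z\rangle|=|\langle S\psi,\psi_{z_1}\rangle|\lesssim\|S\|_{\delta\text{CZO}(\R^{n_1})}L_1(z_1)\lesssim\|T\|_{\delta\text{CZO}(\R^{n_1}\times\R^{n_2})}L_1(z_1)L_2(z_2)$, which is the claim since $L=L_1\otimes L_2$.

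The cancellation of $S$ is immediate from the fully cancellative hypotheses: $\langle S1,g_1\rangle=\langle T(1\otimes\psi),g_1\otimes\psi_{z_2}\rangle=0$ because $\langle T(1\otimes\cdot),\cdot\otimes\cdot\rangle=0$, and $\langle Sf_1,1\rangle=\langle T(f_1\otimes\psi),1\otimes\psi_{z_2}\rangle=0$ because $\langle T(\cdot\otimes\cdot),1\otimes\cdot\rangle=0$. For the $\delta$-CZ kernel of $S$, note that when $f_1$ and $g_1$ have disjoint supports the vector-valued representation of $T$ gives $\langle Sf_1,g_1\rangle=\iint_{\R^{2n_1}}f_1(y)g_1(x)\langle K_1(x,y)\psi,\psi_{z_2}\rangle\,dxdy$, so the kernel of $S$ is $k_S(x,y)=\langle K_1(x,y)\psi,\psi_{z_2}\rangle$. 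By Proposition \ref{CancellationInvariance} each $K_1(x,y)$, and hence each difference $K_1(x,y)-K_1(x,y')$, is a cancellative one-parameter $\delta$-CZO on $L^2(\R^{n_2})$; applying Proposition \ref{OneParameterAlmostDiag} to these operators and using the size and regularity bounds \eqref{kernelsize}, \eqref{kernelsmoothness} of the $\delta\text{CZO}(\R^{n_2})$-valued kernel $K_1$ (whose constant is $\leq\|T\|_{\delta\text{CZO}(\R^{n_1}\times\R^{n_2})}$) yields $|k_S(x,y)|\lesssim\|T\|_{\delta\text{CZO}(\R^{n_1}\times\R^{n_2})}L_2(z_2)|x-y|^{-n_1}$ together with the matching Hölder estimates; that is, $\|k_S\|_\delta\lesssim\|T\|_{\delta\text{CZO}(\R^{n_1}\times\R^{n_2})}L_2(z_2)$.

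The crux — and the one step where the full cancellation is genuinely needed — is the $L^2(\R^{n_1})$-operator bound for $S$ carrying the gain $L_2(z_2)$: the naive inequality $|\langle Sf_1,g_1\rangle|=|\langle T(f_1\otimes\psi),g_1\otimes\psi_{z_2}\rangle|\leq\|T\|_{L^2\to L^2}\|\psi\|_2\|\psi_{z_2}\|_2\|f_1\|_2\|g_1\|_2$ only produces $\|S\|_{L^2\to L^2}\lesssim\|T\|$, with no decay. To recover it, fix $f_1,g_1\in\DD(\R^{n_1})$ and now freeze the \emph{first} slot, introducing the operator $V_{f_1,g_1}$ on $L^2(\R^{n_2})$ defined by $\langle V_{f_1,g_1}\phi,\eta\rangle:=\langle T(f_1\otimes\phi),g_1\otimes\eta\rangle$. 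Using the \emph{other} vector-valued kernel $K_2$ together with the crude $L^2$ bound, one checks that $V_{f_1,g_1}$ is a $\delta$-CZO with $\|V_{f_1,g_1}\|_{\delta\text{CZO}(\R^{n_2})}\lesssim\|T\|_{\delta\text{CZO}(\R^{n_1}\times\R^{n_2})}\|f_1\|_2\|g_1\|_2$, and that it is cancellative because $\langle T(\cdot\otimes1),\cdot\otimes\cdot\rangle=0$ and $\langle T(\cdot\otimes\cdot),\cdot\otimes1\rangle=0$. Proposition \ref{OneParameterAlmostDiag} applied to $V_{f_1,g_1}$ then gives
$$
|\langle Sf_1,g_1\rangle|=|\langle V_{f_1,g_1}\psi,\psi_{z_2}\rangle|\lesssim\|V_{f_1,g_1}\|_{\delta\text{CZO}(\R^{n_2})}L_2(z_2)\lesssim\|T\|_{\delta\text{CZO}(\R^{n_1}\times\R^{n_2})}L_2(z_2)\|f_1\|_2\|g_1\|_2,
$$
so $\|S\|_{L^2(\R^{n_1})\to L^2(\R^{n_1})}\lesssim\|T\|_{\delta\text{CZO}(\R^{n_1}\times\R^{n_2})}L_2(z_2)$. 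Combining this with the kernel estimate from the previous paragraph gives the desired bound on $\|S\|_{\delta\text{CZO}(\R^{n_1})}$ and finishes the proof. I expect the only delicate bookkeeping to be in this last step — verifying that $V_{f_1,g_1}$ really satisfies the hypotheses of Proposition \ref{OneParameterAlmostDiag} with the stated norm, and keeping straight the two roles of $K_1$ (peeling the first variable) and $K_2$ (peeling the second); the remaining ingredients are verbatim one-parameter facts.
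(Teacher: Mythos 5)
Your proposal is correct and follows essentially the same route as the paper: the paper also proves $\|\langle T(\cdot\otimes\psi),\cdot\otimes\psi_{z_2}\rangle\|_{\delta\text{CZO}(\R^{n_1})}\lesssim L_2(z_2)$ by applying Proposition \ref{OneParameterAlmostDiag} to the frozen-first-slot operators $\langle T(f\otimes\cdot),g\otimes\cdot\rangle$ (your $V_{f_1,g_1}$) for the $L^2$ bound and to the pointwise-cancellative kernels $K_1(x,y)$ (via Proposition \ref{CancellationInvariance}) for the kernel bounds, and then applies Proposition \ref{OneParameterAlmostDiag} once more to this cancellative operator to gain the factor $L_1(z_1)$. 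Your write-up just makes explicit the cancellation checks and norm bookkeeping that the paper leaves brief.
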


    \begin{proof}
    Assume $\|T\|_{\delta \text{CZO}(\R^{n_1} \times \R^{n_2)}} = 1$. We first claim that 
    $$
        \| \langle T(\cdot \otimes \psi), \cdot \otimes \psi_{z_2} \rangle \|_{\delta \text{CZO}(\R^{n_1})} \lesssim L_2(z_2).
    $$
    To estimate the $L^2(\mathbb{R}^{n_1})$ norm, fix $f,g \in L^2(\R^{n_1})$ with $\|f\|_{L^2(\R^{n_1})},\|g\|_{L^2(\R^{n_1})} \leq 1$. Then $\langle T(f \otimes \cdot ), g \otimes \cdot \rangle$ is a cancellative $\delta$-CZO, so by Proposition \ref{OneParameterAlmostDiag}, we have 
$$
    \sup_{\|f\|_{L^2(\R^{n_1})}, \|g\|_{L^2(\R^{n_1})} \leq 1} |\langle T(f \otimes \psi), g \otimes \psi_{z_2} \rangle| \lesssim L_2(z_2).
$$
The kernel of $\langle T(\cdot \otimes \psi), \cdot \otimes \psi_{z_2} \rangle$ is given by $\langle K_1(x,y) \psi, \psi_{z_2} \rangle$. Since $K_1(x,y)$ is cancellative for every $x \neq y$, we have $|\langle K_1(x,y) \psi, \psi_{z_2} \rangle| \lesssim |x-y|^{-n_1} L_2(z_2)$, again by Proposition \ref{OneParameterAlmostDiag}. A similar argument gives the smoothness estimate, and so we have the claim. Since this operator is cancellative, one last application of Proposition \ref{OneParameterAlmostDiag} gives
$$
    \left| \langle T \psi, \psi_z \rangle \right| \lesssim L_1(z_1) L_2(z_2),
$$
as desired.
\end{proof}

    \begin{cor}
        If $T$ is a fully cancellative bi-parameter CZO, then $T$ is partially localized with $H_1 = \C$, $H_2 = L^2(\R^{n_1} \times \R^{n_2})$, and frame $\{ \psi_z \}_{ z\in \R_+^{n_1+1} \times \R_+^{n_2+1}}$.
    \end{cor}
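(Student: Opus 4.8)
The plan is to deduce this corollary from Lemma \ref{FullyCancellativeCoefficientEstimate} and Lemma \ref{MatrixBound>Localized}, in direct analogy with the proof of Corollary \ref{VarpiPartiallyWeaklyLocalized} for the partial paraproducts. The one extra ingredient needed is a group-translation argument that upgrades the on-frame bound $|\langle T\psi,\psi_z\rangle| \lesssim \|T\|_{\delta\text{CZO}(\R^{n_1}\times\R^{n_2})} L(z)$ of Lemma \ref{FullyCancellativeCoefficientEstimate} to the full matrix-coefficient bound $\|T_{(z,w)}\|_{\C\to\C} \lesssim \|T\|_{\delta\text{CZO}(\R^{n_1}\times\R^{n_2})} L(z^{-1}w)$ required by the hypothesis of Lemma \ref{MatrixBound>Localized}.

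First I would note that, since $H_1 = \C$ here, the restricted operator $T_{(z,w)}$ is multiplication by the scalar $\langle T\psi_z,\psi_w\rangle$, so $\|T_{(z,w)}\|_{\C\to\C} = |\langle T\psi_z,\psi_w\rangle|$. Then, using that $\mathfrak{U}$ is a unitary representation of $G$ (that is, $\mathfrak{U}(z)\mathfrak{U}(w) = \mathfrak{U}(zw)$ and $\mathfrak{U}(z)^t = \mathfrak{U}(z)^* = \mathfrak{U}(z^{-1})$) together with $\psi_z = \mathfrak{U}(z)\psi$, I would write
$$
    \langle T\psi_z,\psi_w\rangle = \big\langle \mathfrak{U}(w)^t T\mathfrak{U}(z)\psi,\ \psi\big\rangle = \big\langle \big(\mathfrak{U}(z)^t T\mathfrak{U}(z)\big)\psi,\ \psi_{z^{-1}w}\big\rangle,
$$
where the second equality uses $\mathfrak{U}(w)^t T\mathfrak{U}(z) = \mathfrak{U}(w^{-1}z)\,\mathfrak{U}(z)^t T\mathfrak{U}(z)$ and $(w^{-1}z)^{-1} = z^{-1}w$. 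By Proposition \ref{CancellationInvariance}, $T' := \mathfrak{U}(z)^t T\mathfrak{U}(z)$ is again a fully cancellative bi-parameter $\delta$-CZO with $\|T'\|_{\delta\text{CZO}(\R^{n_1}\times\R^{n_2})} = \|T\|_{\delta\text{CZO}(\R^{n_1}\times\R^{n_2})}$, so applying Lemma \ref{FullyCancellativeCoefficientEstimate} to $T'$ yields $|\langle T\psi_z,\psi_w\rangle| \lesssim \|T\|_{\delta\text{CZO}(\R^{n_1}\times\R^{n_2})}\, L(z^{-1}w)$, uniformly in $z$.

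To finish, I would take $\omega(z) = t_1^{n_1/2}t_2^{n_2/2}$ for $z = ((x_1,t_1),(x_2,t_2)) \in G = \R_+^{n_1+1}\times\R_+^{n_2+1}$; this $\omega$ is continuous, strictly positive, and multiplicative under the group law (hence submultiplicative), since the dilation coordinates multiply. As $L = L_1\otimes L_2$ and each $L_i$ is invariant under the $ax+b$-inversion $(x,t)\mapsto(-x/t,1/t)$ — a short case check across the four regions in the definition of $L_i$ — we have $L(\cdot^{-1}) = L(\cdot)$; and
$$
    \int_G L(z)\,\omega(z)\,dz = \prod_{i=1}^{2}\int_{\R_+^{n_i+1}} L_i(x,t)\,t^{n_i/2}\,\frac{dx\,dt}{t^{n_i+1}} < \infty
$$
by the same elementary computation as in Corollary \ref{VarpiPartiallyWeaklyLocalized}, each of the four regions contributing an integral of the form $\int t^{\pm\delta-1}\,dt$ near $0$ or near $\infty$. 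Lemma \ref{MatrixBound>Localized} then gives that $T$ is partially localized, as claimed.

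I do not anticipate a genuine obstacle here; the proof simply plugs Lemma \ref{FullyCancellativeCoefficientEstimate} into the machinery of Section \ref{AbstractSection} exactly as in the partial-paraproduct case. The one point requiring care is the bookkeeping in the translation identity, namely verifying that $\langle T\psi_z,\psi_w\rangle$ depends on $(z,w)$ only through $z^{-1}w$ up to a uniform constant, which reduces to correctly composing $\mathfrak{U}$ with the group law and with the inversion $\mathfrak{U}(z)^t = \mathfrak{U}(z^{-1})$, together with the invariance of full cancellation and of the $\delta\text{CZO}$ norm under conjugation by $\mathfrak{U}(z)$.
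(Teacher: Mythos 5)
Your proposal is correct and follows essentially the same route as the paper: conjugating by $\mathfrak{U}$ to reduce the general matrix coefficient $\langle T\psi_z,\psi_w\rangle$ to the on-frame bound of Lemma \ref{FullyCancellativeCoefficientEstimate} applied to the conjugated operator (which Proposition \ref{CancellationInvariance} shows is again fully cancellative with the same norm), and then invoking Lemma \ref{MatrixBound>Localized} with the weight $\omega((x_1,t_1),(x_2,t_2)) = t_1^{n_1/2}t_2^{n_2/2}$ and the symmetry $L(\cdot^{-1})=L(\cdot)$. Your added verifications of the inversion symmetry of $L$ and of $\int_G L\,\omega\,dz<\infty$ are accurate and merely spell out details the paper leaves implicit.
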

    \begin{proof}
        Note that $L(\cdot^{-1}) = L(\cdot)$, so by Lemma \ref{MatrixBound>Localized} with $\omega((x_1,t_1),(x_2,t_2)) = t_1^{n_1/2} t_2^{n_2/2}$, it is enough to show that
        \[
            |\langle T \mathfrak{U}(w) \psi , \mathfrak{U}(z) \psi \rangle| \lesssim L_1(w_1^{-1} z_1) L_2(w_2^{-1} z_2)
        \]
        or, equivalently, that 
        \[
            \sup_{w \in G} |\langle T_w \psi , \mathfrak{U}(z) \psi \rangle| \lesssim L_1( z_1) L_2( z_2),
        \]
        where $T_w := \mathfrak{U}(w)^t T \mathfrak{U}(w)$. This bound follows from Lemma \ref{FullyCancellativeCoefficientEstimate} and Proposition \ref{CancellationInvariance}.
    \end{proof}

    \begin{thm}\label{FullyCancellativeCompactness}
        If a fully cancellative bi-parameter CZO $T$ satisfies the product weak compactness property, then $T$ is compact on $L^2(\R^{n_1}\times\R^{n_2})$.
    \end{thm}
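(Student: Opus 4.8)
The plan is to deduce this directly from the abstract criterion, Theorem \ref{Abstractbi-parameterCompactnessTheorem}, applied in the concrete setting fixed at the start of this section: $H_1 = \C$, $H_2 = L^2(\R^{n_1}\times\R^{n_2})$, $G = \R_+^{n_1+1}\times\R_+^{n_2+1}$, $\psi_z = \psi_{z_1}\otimes\psi_{z_2}$, and $E_N = D((0,1),N)\times D((0,1),N)$. First I would record that the structural hypotheses of that theorem are met: $T$ is bounded on $\HS \cong \C\,\Ptens\,L^2(\R^{n_1}\times\R^{n_2}) = L^2(\R^{n_1}\times\R^{n_2})$ since it is a bi-parameter CZO; the family $\{\psi_z\}_{z\in G}$ is a bounded, norm-continuous continuous Parseval frame for $H_2$ by the product Parseval frame lemma together with $\psi_z = \mathfrak{U}(z)\psi$ and the strong continuity of $z\mapsto\mathfrak{U}(z)$; and $T$ is partially localized by the corollary immediately preceding this theorem. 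It therefore remains only to verify that $T$ is weakly compact in the sense of Definition \ref{WeakCptDefinition}.

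For this I would first note that, because $H_1 = \C$, the restricted operator $T_{(z,w)}$ is multiplication on $\C$ by the scalar $\langle T\psi_z,\psi_w\rangle$, so it is rank at most one, hence compact, and $\|T_{(z,w)}\|_{H_1\to H_1} = |\langle T\psi_z,\psi_w\rangle|$. I would then translate the group-theoretic vanishing condition: $w\in zE_M$ says exactly $z^{-1}w\in D((0,1),M)\times D((0,1),M)$, i.e., $d(z_i,w_i)<M$ for $i=1,2$ by left-invariance of the metric, i.e., $w\in D(z_1,M)\times D(z_2,M)$; and $z\to\infty$ in $G$ (leaving every $E_N$) agrees with the limit $\lim_{z\to\infty}$ appearing in Definition \ref{BiparameterWeakCompactnessDefinition}. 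Consequently
\[
\lim_{z\to\infty}\sup_{w\in zE_M}\|T_{(z,w)}\|_{H_1\to H_1} = \lim_{z\to\infty}\sup_{w\in D(z_1,M)\times D(z_2,M)}|\langle T\psi_z,\psi_w\rangle| = 0
\]
for every $M>0$, which is precisely the product weak compactness property assumed of $T$. Hence $T$ is weakly compact, and Theorem \ref{Abstractbi-parameterCompactnessTheorem} gives that $T$ is compact on $\HS = L^2(\R^{n_1}\times\R^{n_2})$.

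I do not expect any real obstacle within this argument: the substantive content has already been placed in the off-diagonal decay estimate of Lemma \ref{FullyCancellativeCoefficientEstimate} --- proved by applying the one-parameter almost-diagonality bound of Proposition \ref{OneParameterAlmostDiag} successively in the two variables and using the pointwise cancellation of both kernels of a fully cancellative operator from Proposition \ref{CancellationInvariance} --- and in the abstract Theorem \ref{Abstractbi-parameterCompactnessTheorem} itself. The only point that warrants attention is the bookkeeping identification, in the setting $H_1 = \C$, of the abstract weak compactness condition of Definition \ref{WeakCptDefinition} with the product weak compactness property of Definition \ref{BiparameterWeakCompactnessDefinition}, which comes down to matching the left translates $zE_M$ with the products of hyperbolic balls $D(z_1,M)\times D(z_2,M)$ and checking that the two notions of $z\to\infty$ coincide; both are immediate from the definitions of the group law on $G$ and of the exhaustion $\{E_N\}_{N\in\mathbb{N}}$.
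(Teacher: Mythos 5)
Your proposal is correct and follows essentially the same route as the paper: since $H_1=\C$ each $T_{(z,w)}$ is a compact scalar, so the abstract weak compactness condition reduces to the product weak compactness property, and compactness follows from Theorem \ref{Abstractbi-parameterCompactnessTheorem} together with the partial localization already established in this section. Your extra bookkeeping identifying $zE_M$ with $D(z_1,M)\times D(z_2,M)$ via left-invariance is exactly the (implicit) content of the paper's remark that the two notions coincide.
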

    \begin{proof}
        Since $H_1 = \C$, $T_{(z,w)}$ is a scalar, which is automatically compact on $\C$ for each $z,w \in \R_+^{n_1+1} \times \R_+^{n_2+1}$. Thus, the weak compactness property in this setting reduces to exactly the product weak compactness property, and the result follows from Theorem \ref{Abstractbi-parameterCompactnessTheorem}.
    \end{proof}

\section{Proof of Theorem \ref{bi-parameterCZOCompactnessTheorem}}\label{MainSection}

We now prove our main result, Theorem \ref{bi-parameterCZOCompactnessTheorem}.
\begin{proof}[Proof of Theorem \ref{bi-parameterCZOCompactnessTheorem}]
    Define
    $$
        T' := T - \Pi_{T1} - \Pi_{T_t1,t} - \Pi_{T_t^t1,t}^t - \Pi_{T^t1}^t.
    $$
    Note that $T'$ is a top-level cancellative bi-parameter CZO. By Theorem \ref{TopLevelParaproductCompactness} and the assumption $T1, T^t1, T_t1, T_t^t1 \in \text{CMO}(\mathbb{R}^{n_1}\times \mathbb{R}^{n_2})$, each paraproduct is a compact bi-parameter $1$CZO, so it suffices to prove the compactness of $T'$. Note that by Proposition \ref{WC/CMOnecessary} and Proposition \ref{productWCnecessary}, since each top-level paraproduct is compact, $T'$ also satisfies the same hypotheses as $T$. 
    
    Now define
    $$
        T'' := T' - \varpi_{1,T'} - \varpi_{1,T^{'t}}^t - \varpi_{2,T'} - \varpi_{2,T^{'t}}^t.
    $$
    Since the partial paraproducts are compact by Theorem \ref{partialparaproductcompactness}, it is enough to prove the compactness of $T''$. Further, note that the compactness of each $\varpi$ implies their product weak compactness by Proposition \ref{productWCnecessary}, and hence the product weak compactness of $T''$. Finally, since $T''$ is fully cancellative and satisfies the product weak compactness property, it is compact by Theorem \ref{FullyCancellativeCompactness}. 

    The necessity of the claimed conditions follows from Proposition \ref{productWCnecessary}, Proposition \ref{WC/CMOnecessary}, and Corollary \ref{nec3}.
\end{proof}

\begin{proof}[Proof of Corollary \ref{characterization}]
This immediately follows from Theorem \ref{bi-parameterCZOCompactnessTheorem} since $T_t$ is also a bi-parameter CZO.  
\end{proof}

\section{Acknowledgements}\label{Acknowledgements}

We thank Anastasios Fragkos and A. Walton Green for inspiring discussions and feedback.

This material is based upon work supported by the National Science Foundation Graduate Research Fellowship Program under Grant No. DGE 2139839. Any opinions, findings, and conclusions or recommendations expressed in this material are those of the authors and do not necessarily reflect the views of the National Science Foundation.

%%%%%%%%%%%%%%%%%%%%%%%%%%%%%%%%%%%%%%%%%%%%%%%%%%%%%%%%%%%%%%%%%%%%%%%%%%%

\begin{bibdiv}
\begin{biblist}
\bib{AM1986}{article}{
title={$H^p$ continuity properties of Calder\'on-Zygmund-type operators},
author={J. Alvarez},
author={M. Milman},
journal={J. Math. Anal. Appl.},
volume={118},
date={1986},
number={1},
pages={63--79},
review={\MR{0849442}}
}

\bib{BLOT2025}{article}{
title={Compact $T(1)$ theorem \`a la Stein},
author={\'A. B\'enyi},
author={G. Li},
author={T. Oh},
author={R. H. Torres},
journal={J. Funct. Anal.},
volume={289},
date={2025},
number={7},
pages={Paper No. 111052, 30 pp.},
review={\MR{4910488}}
}

\bib{BOT2024}{article}{
title={Symbolic calculus for a class of pseudodifferential operators with applications to compactness},
author={\'A. B\'enyi},
author={T. Oh},
author={R. H. Torres},
journal={J. Geom. Anal. },
number={10},
pages={Paper No. 301},
date={2025},
review={\MR{4941980}}
}

\bib{COY2022}{article}{
title={Extrapolation for multilinear compact operators and applications},
author={M. Cao},
author={A. Olivo},
author={K. Yabuta},
journal={Trans. Amer. Math. Soc},
volume={375},
date={2022},
number={7},
pages={5011--5070},
review={\MR{4439498}}
}

\bib{CYY2024}{article}{
title={A compact extension of Jorun\'e's $T1$ theorem on product spaces},
author={M. Cao},
author={K. Yabuta},
author={D. Yang},
journal={Trans. Amer. Math. Soc.},
volume={377},
date={2024},
number={9},
pages={6251--6309},
review={\MR{4855312}}
}

\bib{CST2023}{article}{
title={Extrapolation of compactness for certain pseudodifferential operators},
author={M. J. Carro},
author={J. Soria},
author={R. H. Torres},
journal={Rev. Un. Mat. Argentina},
volume={66},
date={2023},
number={1},
pages={177--186},
review={\MR{4653690}}
}

\bib{CF1980}{article}{
title={A continuous version of duality of $H^1$ with BMO on the bidisc},
author={S-Y. A. Chang},
author={R. Fefferman},
journal={Ann. of Math. (2)},
volume={112},
date={1980},
number={1},
pages={179--201},
review={\MR{0584078}}
}

\bib{CW1977}{article}{
title={Extensions of Hardy spaces and their use in analysis},
author={R. Coifman},
author={G. Weiss},
journal={Bull. Amer. Math. Soc},
volume={83},
date={1977},
number={4},
pages={569--645},
review={\MR{0447954}}
}

\bib{DJ1984}{article}{
    title={A boundedness criterion for generalized Calder\'on-Zygmund operators},
    author={G. David},
    author={J.-L. Journ\'e},
    journal={Ann. of Math. (2)},
    volume={120},
    date={1984},
    number={2},
    pages={371--397},
    review={\MR{0763911}}
}

\bib{DWW2023}{article}{
title={Wavelet representation of singular integral operators},
author={F. Di Plinio},
author={B. D. Wick},
author={T. Williams},
journal={Math. Ann.},
volume={386},
date={2023},
number={3-4},
pages={1829--1889},
review={\MR{4612408}}
}

\bib{MR0828217}{article}{
   author={R. Fefferman},
   title={Calder\'on-Zygmund theory for product domains: $H^p$ spaces},
   journal={Proc. Nat. Acad. Sci. U.S.A.},
   volume={83},
   date={1986},
   number={4},
   pages={840--843},
   review={\MR{0828217}}
}

\bib{FS1982}{article}{
title={Singular integrals on product spaces},
author={R. Fefferman},
author={E. M. Stein},
journal={Adv. Math.},
date={1982},
volume={45},
number={2},
pages={117--143},
review={\MR{0664621}}
}

\bib{FGW2023}{article}{
title={Multilinear wavelet compact $T(1)$ theorem},
author={A. Fragkos},
author={A. W. Green},
author={B. D. Wick},
journal={arXiv:2312.09185},
date={2023}
}

\bib{G2016}{article}{
title={Comparison of $T1$ conditions for multi-parameter operators},
author={A. Grau de la Herr\'an},
journal={Proc. Amer. Math. Soc.},
volume={144},
date={2016},
number={6},
pages={2437--2443},
review={\MR{3477059}}
}

\bib{HAN20102834}{article}{
title={Calder\'on-Zygmund operators on product Hardy spaces},
author={Y. Han},
author={M.-Y. Lee},
author={C.-C. Lin},
author={Y.-C. Lin},
journal={J. Funct. Anal.},
volume={258},
date={2010},
number={8},
pages={2834--2861},
review={\MR{2593346}}
}

\bib{HL2023}{article}{
title={Extrapolation of compactness on weighted spaces},
author={T. P. Hyt\"onen},
author={S. Lappas},
date={2023},
volume={39},
journal={Rev. Mat. Iberoam.},
number={1},
pages={91--122},
review={\MR{4571600}}
}

\bib{HNVW23}{book}{
title={Analysis in Banach spaces. Vol. III. Harmonic analysis and spectral theory},
author={T. Hytönen},
author={J. van Neerven},
author={M. Veraar},
author={L. Weis},
series={Ergeb. Math. Grenzgeb. (3), 76},
publisher={Springer, Cham},
series={[Results in Mathematics and Related Areas. $3^{\text{rd}}$ Series. A Series of Modern Surveys in Mathematics]},
date={2023},
pages={xxi+826 pp.},
review={\MR{4696978}}
}

\bib{J1985}{article}{
title={Calder\'on-Zygmund operators on product spaces},
author={J.-L. Journ\'e},
journal={Rev. Mat. Iberoam.},
volume={1},
date={1985},
number={3},
pages={55--91},
review={\MR{0836284}}
}

\bib{M2012}{article}{
title={Representation of bi-parameter singular integrals by dyadic operators},
author={H. Martikainen},
journal={Adv. Math.},
volume={229},
date={2012},
number={3},
pages={1734--1761},
review={\MR{2871155}}
}

\bib{M1992}{book}{
title = {Wavelets and operators},
author = {Y. Meyer},
series = {Cambridge Stud. Adv. Math., 37},
isbn = {0521420008},
year = {1992},
publisher = {Cambridge University Press},
address={Cambridge},
pages={xvi+224 pp.},
review={\MR{1228209}}
}

\bib{MS2023}{article}{
title={On the $T1$ theorem for compactness of Calder\'on-Zygmund operators},
author={M. Mitkovski},
author={C. B. Stockdale},
journal={arXiv:2309.15819},
date={2023}
}

\bib{MSWW2023}{article}{
title={Riesz-Kolmogorov type compactness criteria in function spaces with applications},
author={M. Mitkovski},
author={C. B. Stockdale},
author={N. A. Wagner},
author={B. D. Wick},
journal={Complex Anal. Oper. Theory},
volume={17},
date={2023},
number={3},
pages={Paper No. 40},
review={\MR{4569081}}
}

\bib{OV2017}{article}{
title={Endpoint estimates for compact Calder\'on-Zygmund operators},
author={J-F. Olsen},
author={P. Villarroya},
journal={Rev. Mat. Iberoam.},
volume={33},
date={2017},
pages={1285–-1308},
review={\MR{3729600}}
}

\bib{O2015}{article}{
title={A $T(b)$ theorem on product spaces},
author={Y. Ou},
journal={Trans. Amer. Math. Soc.},
volume={367},
date={2015},
number={9},
pages={6159--6197},
review={\MR{3356933}}
}

\bib{O2017}{article}{
title={Multi-parameter singular integral operators and representation theorem},
author={Y. Ou},
journal={Rev. Mat. Iberoam.},
volume={33},
date={2017},
number={1},
pages={325--350},
review={\MR{3615454}}
}

\bib{PPV2017}{article}{
title={Endpoint compactness of singular integrals and perturbations of the Cauchy integral},
author={K-M. Perfekt},
author={S. Pott},
author={P. Villarroya},
journal={Kyoto J. Math.},
volume={57},
date={2017},
number={2},
pages={365--393},
review={\MR{3648054}}
}

\bib{PV2011}{article}{
title={A $T(1)$ theorem on product space},
author={S. Pott},
author={P. Villarroya},
journal={arXiv:1105.2516},
date={2011}
}

\bib{SVW2022}{article}{
title={Sparse domination results for compactness on weighted spaces},
author={C. B. Stockdale},
author={P. Villarroya},
author={B. D. Wick},
journal={Collect. Math.},
volume={73},
date={2022},
number={3},
pages={535--563},
review={\MR{4467913}}
}

\bib{V2015}{article}{
    title={A characterization for compactness of singular integrals},
    author={P. Villarroya},
    journal={J. Math. Pures Appl. (9)},
    volume={104},
    date={2015},
    number={3},
    pages={485--532},
    review={\MR{3383175}}
}
\end{biblist}
\end{bibdiv}
\end{document}